\documentclass[12pt]{amsart}
\usepackage[margin=1.2in]{geometry}
\usepackage{graphicx,latexsym}
\usepackage{comment}
\usepackage{tikz}
\usepackage{amsfonts, amssymb, amsmath, amsthm, bm, hyperref}
\usepackage{tikz}
\usetikzlibrary{matrix,arrows}
\allowdisplaybreaks

\newcommand{\N}{\mathbb{N}}
\newcommand{\Z}{\mathbb{Z}}
\newcommand{\R}{\mathbb{R}}
\newcommand{\C}{\mathbb{C}}

\newcommand{\dt}{{\rm d}t }

\newcommand{\beq}{\begin{eqnarray}}
\newcommand{\eeq}{\end{eqnarray}}

\newcommand{\beqs}{\begin{eqnarray*}}
\newcommand{\eeqs}{\end{eqnarray*}}

\newtheorem{theorem}{Theorem}[section]
\newtheorem{proposition}[theorem]{Proposition}
\newtheorem{lemma}[theorem]{Lemma}
\newtheorem{corollary}[theorem]{Corollary}

\theoremstyle{definition}

\newtheorem{example}[theorem]{Example}

\newtheorem{remark}[theorem]{Remark}

\numberwithin{equation}{section}

\begin{document}

\title[Sequence space representations of Beurling-Bj\"orck spaces]{Sequence space representations of Beurling-Bj\"orck spaces via Gabor frames and Wilson bases}

\author[A. Debrouwere]{Andreas Debrouwere}
\address{Department of Mathematics and Data Science \\ Vrije Universiteit Brussel, Belgium\\ Pleinlaan 2 \\ 1050 Brussels \\ Belgium}
\email{andreas.debrouwere@vub.be}

\author[L. Neyt]{Lenny Neyt}
\address{University of Vienna\\ Faculty of Mathematics\\ Oskar-Morgenstern-Platz 1 \\ 1090 Wien\\ Austria}
\thanks{The research of L. Neyt was funded in whole by the Austrian Science Fund (FWF) 10.55776/ESP8128624. For open access purposes, the author has applied a CC BY public copyright license to any author-accepted manuscript version arising from this submission.}
\email{lenny.neyt@univie.ac.at}

\subjclass[2020]{\emph{Primary.} 46A45, 46E10, 46F05. \emph{Secondary.} 42B10, 81S30.} 
\keywords{Beurling-Bj\"orck spaces; Gelfand-Shilov spaces; sequence space representations; Gabor frames; Wilson bases.}

\begin{abstract}
We establish sequence space representations of a broad class of Beurling-Bj\"orck spaces $\mathcal{S}^{(\omega)}_{(\eta)}$ and $\mathcal{S}^{\{\omega\}}_{\{\eta\}}$.  We develop two different approaches: a non-constructive one based on Gabor frames and the structure theory of Fréchet spaces, and a constructive one using Wilson bases, under stronger assumptions on the defining weight functions $\omega$ and $\eta$. As an application, we provide an isomorphic classification of the spaces  $\mathcal{S}^{(\omega)}_{(\eta)}$ and $\mathcal{S}^{\{\omega\}}_{\{\eta\}}$ in terms of $\omega$ and $\eta$. In particular, our results are applicable to the classical Gelfand-Shilov spaces $\mathcal{S}^\mu_\tau$ for $\mu, \tau \geq 1/2$ (non-constructive approach) and $\mu, \tau \geq 1$ (constructive approach).
\end{abstract}

\maketitle

\section{Introduction}
In his classical work \cite{S-ThDist}, Schwartz characterized the space $\mathcal{S}$ of rapidly decreasing smooth functions on $\mathbb{R}$ via the Hermite series expansions of its elements. As a consequence, he showed that $\mathcal{S}$ is topologically isomorphic to the space $s$ of rapidly decreasing sequences. Later, in the early 1980s, Valdivia and Vogt established sequence space representations -- that is, topological isomorphisms with sequence spaces -- of most of the function and distribution spaces arising in Schwartz's theory of distributions \cite{V-TopLCS, V-SeqSpRepTestFuncDist}. We refer to \cite{Bargetz2015a, Bargetz2015b, B-D-N-SeqSpRepWilson, D-N-SeqSpRepTMIB, D-N-V-ExplCommSeqSpRep, OrtnerWagner2013} for some recent works on this topic. Beyond their intrinsic interest, sequence space representations of (generalized) function spaces play a key role in the study of Schauder bases and in the isomorphic classification of such spaces.

In this article, we obtain sequence space representations of Beurling–Björck spaces defined on $\R$ \cite{B-LinPDOGenDist,D-N-V-CharNuclBBSp}. These are function spaces whose elements and their Fourier transforms satisfy decay estimates governed by (possibly different) weight functions. Beurling–Björck spaces encompass a wide variety of spaces of ultradifferentiable functions satisfying global decay estimates \cite{C-C-K-CharGelfandShilovFourier}, including the spaces $\mathcal{S}^{\mu}_\tau$ introduced by Gelfand and Shilov \cite{G-S-GenFunc2} and their projective counterparts $\Sigma^{\mu}_\tau$ \cite{C-G-P-R-AnistropicShubinOpExpGSSp, D-SeqSpRepEntFunc}; see Example \ref{CGS} for the precise definition of these spaces. 
The space  $\mathcal{S}^{\mu}_\tau$ is non-trivial if and only $\mu + \tau \geq 1$, while $\Sigma^{\mu}_\tau$ is non-trivial if and only $\mu + \tau > 1$ (cf.\ \cite[Section 8]{G-S-GenFunc2}).

Sequence space representations of Fourier invariant Gelfand–Shilov type spaces, including  $\mathcal{S}^{\mu}_\mu$ ($\mu \geq 1/2$) and $\Sigma^{\mu}_\mu$ ($\mu > 1/2$), were obtained by Langenbruch \cite{L-HermiteFuncWeighSpGenFunc} via Hermite expansions. For non-Fourier invariant spaces, establishing such representations appears to be more difficult. Sequence space representations of $\mathcal{S}^{\mu}_\tau$ ($\mu + \tau  \geq 1$)  and $\Sigma^{\mu}_\tau$ ($\mu + \tau  > 1$) with $\mu/\tau \in \mathbb{Q}$ follow from the work of Cappiello et al.\ \cite{C-G-P-R-AnistropicShubinOpExpGSSp}, where these spaces are characterized via eigenfunction expansions with respect to certain anisotropic Shubin operators. Langenbruch \cite{L-BasesGerms, L-DiamDimWeighSpGerms} obtained sequence space representations of a class of weighted spaces of holomorphic germs near $\mathbb{R}$, including the spaces $\mathcal{S}^{1}_\tau$ ($\tau >0$). His method combines results from the structure theory of Fréchet spaces with techniques from complex analysis. The first author \cite{C-G-P-R-AnistropicShubinOpExpGSSp} applied a similar approach to find sequence space representations of certain weighted spaces entire function spaces, including $\Sigma^{1}_\tau$ ($\tau >0$). A drawback of this method -- as opposed to the one based on eigenfunction expansions -- is that the resulting isomorphisms are non-constructive, as they rely on structural results for power series spaces which are themselves non-constructive.

We provide sequence space representations of a broad class of Beurling-Bj\"orck spaces that are not necessarily Fourier invariant, using tools from time-frequency analysis \cite{G-FoundationsTimeFreqAnal}, namely, Gabor frames and Wilson bases.   Gabor frames, a class of frames for $L^2(\R)$ consisting of time-frequency shifts of a single window function, are fundamental in time-frequency analysis and are particularly well-suited for describing weighted spaces of (ultra)differentiable functions in a discrete fashion \cite{GZ-SpTestFuncSTFT}. 
Due to the Balian-Low `no-go' theorem \cite[Section 8.4]{G-FoundationsTimeFreqAnal}, Gabor frames generated by windows sufficiently well-localized in both time and frequency cannot form Riesz bases. Remarkably, Daubechies, Jaffard, and Journ\'e \cite{D-J-J-WilsonBasisExpDecay} constructed orthonormal bases for $L^2(\R)$, called Wilson bases, consisting of linear combinations of time-frequency shifts of a single window.

We establish sequence space representations via two different methods. Firstly, following our previous work \cite{D-N-SeqSpRepTMIB}, we obtain such representations by combining the Pe\l czynski-Vogt decomposition method \cite{V-IsomorphPowSp} -- an abstract result about power series spaces -- with structural properties of Gabor frames and characterizations of Beurling-Bj\"orck spaces via Gabor frame coefficients. Due to the use of the Pe\l czynski-Vogt decomposition method, this yields non-constructive representations. Secondly, following \cite{B-D-N-SeqSpRepWilson}, we obtain constructive and explicit sequence space representations using characterizations of Beurling-Bj\"orck spaces via Wilson basis coefficients. 

In both the non-constructive and constructive approach, we need to impose some growth conditions on the weight functions defining the Beurling–Björck spaces. These conditions are necessary to apply certain results about Gabor frames \cite{Janssen} and Wilson bases \cite{D-J-J-WilsonBasisExpDecay}. The assumptions are less restrictive in the non-constructive case, which motivates our consideration of both approaches.

As a sample, we now state our main results for the spaces  $\mathcal{S}^\mu_\tau$ and $\Sigma^\mu_\tau$. We need to introduce some notation about power series spaces  \cite[Chapter 29]{M-V-IntroFuncAnal}; see also Section \ref{sec:PowerSeries}. 
Let $\alpha = (\alpha_n)_{n \in \N}$  and $\beta = (\beta_n)_{n \in \N}$ be non-negative sequences tending to infinity. We define $\Lambda_\infty(\alpha)$ ($\Lambda_0(\alpha)$) as the Fr\'echet space consisting of all $(c_n)_{n \in \N} \in \C^\N$ such that
$$
\sup_{n \in \N}|c_n| e^{r\alpha_n} < \infty
$$
for all $r >0$ (for all $r <0$). Similarly, we define $\Lambda_\infty(\alpha,\beta)$ ($\Lambda_0(\alpha,\beta)$) as the Fr\'echet space consisting of all $(c_{k,n})_{(k,n) \in \N^2} \in \C^{\N^2}$ such that
$$
\sup_{(k,n) \in \N^2}|c_{k,n}| e^{r(\alpha_k + \beta_n)} < \infty
$$
for all $r >0$ (for all $r <0$). Given two locally convex spaces $E$ and $F$, we write $E \cong F$ to indicate that they are topologically isomorphic. Our first main result is:

\begin{theorem} \label{thm-intro}
 For  $\mu, \tau >1/2$, it holds that
\begin{equation}
\label{projcase}
\Sigma^{\mu}_{\tau} \cong \Lambda_\infty(k^{\frac{1}{\mu}},n^{\frac{1}{\tau}} ) \cong \Lambda_\infty(n^{\frac{1}{\mu + \tau}}).
\end{equation}
For  $\mu, \tau \geq 1/2$, it  holds that
\begin{equation}
\label{LBcase}
\mathcal{S}^{\mu}_{\tau} \cong \Lambda'_0(k^{\frac{1}{\mu}}, n^{\frac{1}{\tau}}) \cong \Lambda'_0(n^{\frac{1}{\mu + \tau}}).
\end{equation}
\end{theorem}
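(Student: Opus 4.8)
The plan is to derive Theorem~\ref{thm-intro} from the general sequence space representations established later in the paper, together with a combinatorial analysis of power series spaces. First one observes (Example~\ref{CGS}) that these Gelfand--Shilov spaces are Beurling--Björck spaces attached to power weights: with $\omega(\xi) = |\xi|^{1/\mu}$ and $\eta(x) = |x|^{1/\tau}$ one has $\Sigma^{\mu}_{\tau} = \mathcal{S}^{(\omega)}_{(\eta)}$ and $\mathcal{S}^{\mu}_{\tau} = \mathcal{S}^{\{\omega\}}_{\{\eta\}}$. One then checks that such weights satisfy the standing hypotheses used throughout; the restriction $\mu,\tau \geq 1/2$ (resp.\ $>1/2$) is precisely what makes the input from the theory of Gabor frames applicable, and it also ensures non-triviality of the spaces. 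Feeding this into our general theorems (non-constructive approach) yields
\[
\Sigma^{\mu}_{\tau} \cong \Lambda_\infty\!\left(k^{1/\mu}, n^{1/\tau}\right), \qquad \mathcal{S}^{\mu}_{\tau} \cong \Lambda'_0\!\left(k^{1/\mu}, n^{1/\tau}\right),
\]
where one uses that the exponent sequence associated in those theorems to a power weight $|t|^{1/\rho}$ is equivalent to $n \mapsto n^{1/\rho}$.

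It remains to prove the second isomorphism in \eqref{projcase} and in \eqref{LBcase}. For $\varepsilon \in \{0,\infty\}$ the space $\Lambda_\varepsilon(k^{1/\mu}, n^{1/\tau})$ is, by its very definition, the power series space attached to the multi-indexed exponent family $\bigl(k^{1/\mu} + n^{1/\tau}\bigr)_{(k,n) \in \N^2}$. Relabelling its canonical unconditional basis along the increasing rearrangement $\gamma = (\gamma_j)_{j \in \N}$ of this family gives an isometric identification $\Lambda_\varepsilon(k^{1/\mu}, n^{1/\tau}) = \Lambda_\varepsilon(\gamma)$. Since equivalent exponent sequences define the same power series space (for $r$ of the appropriate sign, an estimate $\gamma_j \leq C \delta_j$ and $\delta_j \leq C \gamma_j$ makes the seminorm systems $\sup_j |c_j| e^{r \gamma_j}$ and $\sup_j |c_j| e^{r \delta_j}$ mutually dominated up to a rescaling of $r$), it suffices to show that $\gamma_j \asymp j^{1/(\mu+\tau)}$. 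The Roumieu identity $\Lambda'_0(k^{1/\mu}, n^{1/\tau}) \cong \Lambda'_0(n^{1/(\mu+\tau)})$ then follows by passing to strong duals in $\Lambda_0(k^{1/\mu}, n^{1/\tau}) \cong \Lambda_0(n^{1/(\mu+\tau)})$, both spaces being nuclear Fréchet spaces.

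The estimate $\gamma_j \asymp j^{1/(\mu+\tau)}$ is a lattice-point count. Let $N(t) = \#\{(k,n) \in \N^2 : k^{1/\mu} + n^{1/\tau} \leq t\}$. Comparing $N(t)$ with the area of the planar region $\{(x,y) \in [0,\infty)^2 : x^{1/\mu} + y^{1/\tau} \leq t\}$, which under the scaling $x = t^\mu u$, $y = t^\tau v$ equals $c_{\mu,\tau}\, t^{\mu+\tau}$ with $c_{\mu,\tau} = \iint_{u^{1/\mu} + v^{1/\tau} \leq 1} \du \dv > 0$, and noting that the difference is controlled by a boundary term of order $t^{\max(\mu,\tau)} = o(t^{\mu+\tau})$, one gets $N(t) \asymp t^{\mu+\tau}$ as $t \to \infty$. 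Since $N(\gamma_j) = j + O(1)$, inverting the relation $N(t) \asymp t^{\mu+\tau}$ gives $\gamma_j \asymp j^{1/(\mu+\tau)}$.

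I expect the main obstacle to be the first step rather than the combinatorial part: one must verify that the power weights $|\xi|^{1/\mu}$ and $|x|^{1/\tau}$ genuinely satisfy all the growth conditions under which the general representation theorems are proved --- in particular the ones descending from Janssen's results on Gabor frames --- and one must identify the abstractly defined exponent sequences of those theorems with $k^{1/\mu}$ and $n^{1/\tau}$, which reduces to sharp two-sided bounds on the counting functions of the weights. The borderline configuration $\mu = \tau = 1/2$, where $\mathcal{S}^{1/2}_{1/2}$ is still non-trivial while $\Sigma^{1/2}_{1/2} = \{0\}$, needs a little extra care. The remaining ingredients --- the relabelling of the basis and the lattice-point estimate --- are routine.
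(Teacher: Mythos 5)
Your proposal is correct and follows essentially the same route as the paper: the first isomorphisms come from the non-constructive Gabor-frame representation (Gabor accessibility of $\mathcal{S}^{1/2}_{1/2}$ via Janssen's result, giving the thresholds $\mu,\tau>1/2$ resp.\ $\geq 1/2$), and the second isomorphisms come from identifying the non-decreasing rearrangement of $(k^{1/\mu}+n^{1/\tau})_{(k,n)}$ with $j^{1/(\mu+\tau)}$. The only cosmetic difference is that the paper performs your lattice-point count purely combinatorially, via the two-sided bound $\nu_\alpha(s/2)\nu_\beta(s/2)\leq\nu_{\alpha\sharp\beta}(s)\leq\nu_\alpha(s)\nu_\beta(s)$ on counting functions (Lemmas \ref{fundamental} and \ref{explicit}), rather than by comparison with the area of the region $\{x^{1/\mu}+y^{1/\tau}\leq t\}$.
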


Theorem \ref{thm-intro} will be shown by using the non-constructive approach outlined above. 
We will also show the first isomorphisms in \eqref{projcase} and \eqref{LBcase} in a constructive and explicit way, under the stronger assumptions    $\mu, \tau >1$ and   $\mu, \tau \geq 1$, respectively. As a consequence, we obtain  `commuting' sequence space representations of such spaces:

\begin{theorem}\label{intro-2}
There exists a topological isomorphism
  $$
   \Psi\colon   \mathcal{S}  \to  \Lambda_\infty(\log(1+k), \log(1+n))
$$
such that, for all $\mu, \tau > 1$, 
$$ 
   \Psi_{\mid \Sigma^{\mu}_{\tau}} \colon   \Sigma^{\mu}_{\tau}   \to  \Lambda_\infty(k^{\frac{1}{\mu}},n^{\frac{1}{\tau}} )
$$
and, for all $\mu, \tau \geq 1$, 
$$ 
   \Psi_{ \mid \mathcal{S}^{\mu}_{\tau}} \colon   \mathcal{S}^{\mu}_{\tau}   \to \Lambda'_0(k^{\frac{1}{\mu}},n^{\frac{1}{\tau}} )
$$
are topological isomorphisms.
\end{theorem}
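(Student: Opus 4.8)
The plan is to produce the isomorphism $\Psi$ directly as a (reindexed) Wilson‑coefficient map and then to read off all three assertions from the constructive Wilson‑basis characterizations of $\mathcal{S}$, $\Sigma^{\mu}_{\tau}$ and $\mathcal{S}^{\mu}_{\tau}$ established in the preceding sections. First I would fix, once and for all, a Wilson basis $(\psi_{\gamma})_{\gamma \in \mathcal{I}}$ of $L^{2}(\R)$ whose generating window lies in a Gelfand--Shilov class small enough for the construction of \cite{D-J-J-WilsonBasisExpDecay} to be available under the hypotheses $\mu,\tau \geq 1$; this is precisely the basis used in the constructive part of the paper. Let $C\colon f \mapsto (\langle f,\psi_{\gamma}\rangle)_{\gamma \in \mathcal{I}}$ be the associated coefficient operator and fix a bijection $R\colon \mathcal{I} \to \N^{2}$ that transports the natural (frequency, $\pm$-translation) labelling of the Wilson system to the standard lattice $\N^{2}$ — merging the two translation half‑lattices and absorbing the exceptional $k=0$ row. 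The crucial point is that $R$, and hence $\Psi := R \circ C$, depends on neither $\mu$ nor $\tau$; moreover, replacing the transported weight sequences by the concrete sequences $k^{1/\mu}$, $n^{1/\tau}$, $\log(1+k)$, $\log(1+n)$ changes them only up to the equivalence under which power series spaces are invariant, so one may work with the latter throughout.

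Next I would invoke the Wilson‑basis representations proved earlier. Applied with the weight $t \mapsto \log(1+|t|)$, for which the Beurling--Björck space $\mathcal{S}^{(\omega)}_{(\eta)}$ with $\omega = \eta = \log(1+|\cdot|)$ coincides with $\mathcal{S}$ (Björck), this shows that $\Psi\colon \mathcal{S} \to \Lambda_{\infty}(\log(1+k),\log(1+n))$ is a topological isomorphism. Applied with the weights of the Gelfand--Shilov scale, the same operator $C$ yields, for $\mu,\tau > 1$, a topological isomorphism $\Sigma^{\mu}_{\tau} \to \Lambda_{\infty}(k^{1/\mu},n^{1/\tau})$ (Beurling type) and, for $\mu,\tau \geq 1$, a topological isomorphism $\mathcal{S}^{\mu}_{\tau} \to \Lambda'_{0}(k^{1/\mu},n^{1/\tau})$ (Roumieu type). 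Since $\Sigma^{\mu}_{\tau}$ and $\mathcal{S}^{\mu}_{\tau}$ consist of Schwartz functions and the pairing $\langle f,\psi_{\gamma}\rangle$ is computed in the same way in every space, the set‑theoretic restriction of $\Psi$ to either subspace coincides with the corresponding coefficient isomorphism, and it lands in the corresponding sequence space.

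It then remains to check that the diagram closes continuously. On the function side, $\Sigma^{\mu}_{\tau} \hookrightarrow \mathcal{S}$ and $\mathcal{S}^{\mu}_{\tau} \hookrightarrow \mathcal{S}$ continuously (classical, or obtained from the Wilson picture itself). On the sequence side, $\Lambda_{\infty}(k^{1/\mu},n^{1/\tau}) \hookrightarrow \Lambda_{\infty}(\log(1+k),\log(1+n))$ and $\Lambda'_{0}(k^{1/\mu},n^{1/\tau}) \hookrightarrow \Lambda_{\infty}(\log(1+k),\log(1+n))$ continuously: for every $r>0$ and $N \in \N$ the quantity $e^{-r t^{1/\mu}}(1+t)^{N}$ is bounded in $t$, so each defining seminorm $\sup_{k,n}|c_{k,n}|(1+k)^{N}(1+n)^{N}$ of the coarser space is dominated by a single defining seminorm of the finer one (and is bounded on each Banach step in the Roumieu case). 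Consequently the square formed by these two inclusions, by $\Psi$, and by the restriction of $\Psi$ commutes, and the horizontal maps are all topological isomorphisms onto their targets, which is exactly the claim.

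The only genuinely delicate point is the bookkeeping in the first paragraph: one must choose the reindexing $R$ a single time, independently of $\mu$ and $\tau$, so that it simultaneously matches the Wilson labelling to each of the three weight systems, and one must use that power series spaces are insensitive both to passage to equivalent weight sequences and to permutations of the index set. All the substantive analytic content — continuity and invertibility of the Wilson‑coefficient operator on $\mathcal{S}$, $\Sigma^{\mu}_{\tau}$ and $\mathcal{S}^{\mu}_{\tau}$ — is already supplied by the characterizations of the preceding sections, so beyond this bookkeeping the proof is a short assembly together with the elementary continuous inclusions of sequence spaces noted above.
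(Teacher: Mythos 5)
Your proposal is correct and follows essentially the same route as the paper: fix the Daubechies--Jaffard--Journ\'e window $\psi \in \mathcal{S}^1_1$, take $\Psi$ to be the Wilson coefficient operator $\tilde{C}_\psi$ composed with a fixed reindexing $\Z\times\N \to \N^2$ (the paper's map $\Phi$), and read off all three isomorphisms from the Wilson-basis representation theorem applied to the weights $\log(1+t)$, $t^{1/\mu}$, $t^{1/\tau}$, exactly as in Corollary \ref{corWilson}. The compatibility checks in your last two paragraphs are harmless but essentially automatic, since all three isomorphisms are given by the same formula $\Phi\circ\tilde{C}_\psi$ with the single window $\psi$.
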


This article is organized as follows. In the preliminary Sections \ref{sec:Spaces} and  \ref{sec:PowerSeries}, we introduce Beurling-Bj\"orck spaces and power series spaces and recall some of their fundamental properties.  Gabor frames and Wilson bases are discussed in Section \ref{sec:TFAnalysis}, where we also study their mapping properties on Beurling-Bj\"orck spaces.
The main results of this paper, namely, non-constructive and constructive sequence space representations of Beurling-Bj\"orck spaces, are established in Section \ref{sec:SeqSpRep}.
In the final Section \ref{sec:IsoClass}, we apply these representations to study the isomorphic classification of Beurling-Bj\"orck spaces.

To end our introduction, we mention that in this work, we only consider the one-dimensional case to avoid some tedious technicalities. However, by the use of tensor product decompositions, which are known for the Beurling-Bj\"orck spaces we consider \cite[Theorem 1.1]{N-V-KernThmBBSp}, it is possible to extend our results to higher dimensions. See also \cite[Section 12.3]{G-FoundationsTimeFreqAnal}, where a similar approach has been taken.

\section{Spaces of ultradifferentiable functions with rapid decay}
\label{sec:Spaces}

Let $\omega: [0,\infty) \rightarrow [0,\infty)$ be continuous and increasing. We consider the following conditions on $\omega$:
\begin{itemize}
\item[$(\alpha)$] $\omega(2t) = O(\omega(t))$.
\item [$(\gamma)$] $\log t = O(\omega(t))$.
\item [$\{\gamma\}$] $\log t = o(\omega(t))$.
\end{itemize}
Condition $(\alpha)$ is equivalent to the existence of $K,C \geq 1$ such that
\begin{equation}
\label{eq:AlphaEquiv}
\omega(t_1 + t_2) \leq K[\omega(t_1) + \omega(t_2)]  + \log C, \qquad \forall t_1, t_2 \geq 0.
\end{equation}
We call $\omega$ \emph{non-quasianalytic} if 
$$
\int_0^\infty \frac{\omega(t)}{1+t^2} \dt < \infty.
$$
\begin{example}
$(i)$ Let $\mu >0$ and $u \in \R$. Then, $\omega_{\mu, u}(t) = t^{\frac{1}{\mu}}\log(1+ t)^u$ satisfies $(\alpha)$ and $\{\gamma\}$. The function $\omega_{\mu, u}$ is non-quasinalytic if and only if $\mu  > 1$ or $\mu =1$ and $u < -1$.  \\
$(ii)$ Let $u \geq 1$. Then, $\omega_u(t) = \log(1+ t)^u$ satisfies $(\alpha)$ and $(\gamma)$, and is non-quasianalytic. The function $\omega_u$ satisfies $\{\gamma\}$ if and only if $u > 1$.
\end{example}

Let $\omega, \eta: [0,\infty) \rightarrow [0,\infty)$ be continuous and increasing. For $h > 0$  and $\varphi: \R \to \C$ we set
$$
\|\varphi\|_{\omega,h} = \sup_{x\in\mathbb{R}} |\varphi(x)|e^{h \omega(x)}.
$$
 We define  $\mathcal{S}_{\eta,h}^{\omega,h}$ as the normed space consisting of all $\varphi \in L^1(\mathbb{R})$ such that 
$$
\|\varphi\|_{\mathcal{S}_{\eta,h}^{\omega,h}} = \max \{ \|\varphi\|_{\eta,h}, \|\widehat{\varphi}\|_{\omega,h}\}<\infty.
$$
We define the \emph{Beurling-Bj\"{o}rck spaces (of Beurling and Roumieu type)} as 
\[
\mathcal{S}_{(\eta)}^{(\omega)}=\varprojlim_{h \to\infty} \mathcal{S}_{\eta,h}^{\omega,h} \qquad \mbox{and}
\qquad \mathcal{S}_{\{\eta\}}^{\{\omega\}}=\varinjlim_{h\to0^{+}} \mathcal{S}_{\eta,h}^{\omega,h}.
\]
We shall use $\mathcal{S}_{[\eta]}^{[\omega]}$ as a common notation for $\mathcal{S}_{(\eta)}^{(\omega)}$ and  $\mathcal{S}_{\{\eta\}}^{\{\omega\}}$. A similar convention will be used for other notations. In addition, we shall sometimes first state assertions for the Beurling case, followed in parentheses by the corresponding ones for the Roumieu case. 

 A continuous increasing function $\omega: [0,\infty) \rightarrow [0,\infty)$  is called a \emph{Beurling (Roumieu) weight function}  if  $\omega$ satisfies $(\alpha)$ and $(\gamma)$ ($(\alpha)$ and $\{\gamma\}$). If we simply write that $\omega$ is a weight function, we mean that $\omega$ is a Beurling  (Roumieu) weight function when we consider the Beurling (Roumieu) case.

Let $\omega$ be a weight function. We define $\mathcal{D}^{[\omega]}$ as the space consisting of all $\varphi \in L^1(\R)$ with compact support such that $\|\widehat{\varphi}\|_{\omega,h} < \infty$ for all $h > 0$ (some $h > 0$). The space $\mathcal{D}^{[\omega]}$ is non-trivial if and only if $\omega$ is non-quasianalytic \cite[Theorem 1.3.7]{B-LinPDOGenDist}. 

Let $\omega_i$ and $\eta_i$, $i = 1,2$, be four weight functions. Then, $\mathcal{S}_{[\eta_1]}^{[\omega_1]} \subseteq \mathcal{S}_{[\eta_2]}^{[\omega_2]}$ if $\omega_2 = O(\omega_1)$ and  $\eta_2 = O(\eta_1)$, and $\mathcal{S}_{\{\eta_1\}}^{\{\omega_1\}} \subseteq \mathcal{S}_{(\eta_2)}^{(\omega_2)}$  if $\omega_2 = o(\omega_1)$ and  $\eta_1 = o(\eta_2)$. Both inclusions are continuous. In particular, $\mathcal{S}_{[\eta_1]}^{[\omega_1]} = \mathcal{S}_{[\eta_2]}^{[\omega_2]}$ as locally convex spaces if $\omega_1 \asymp \omega_2$  (meaning that $\omega_1= O(\omega_2)$ and  $\omega_2 = O(\omega_1)$) and  $\eta_1  \asymp \eta_2$.

Let $\omega$ and $\eta$ be two weight functions. Since $\mathcal{S}^{(\log(1+ t))}_{(\log(1+ t))} = \mathcal{S}$, it holds that $\mathcal{S}_{[\eta]}^{[\omega]} \subseteq \mathcal{S}$ continuously. Using this, one can show that  $\mathcal{S}_{(\eta)}^{(\omega)}$ is a Fr\'echet space and that $\mathcal{S}_{\{\eta\}}^{\{\omega\}}$ is an $(LB)$-space. 
The Fourier transform is a topological isomorphism from $\mathcal{S}_{[\eta]}^{[\omega]}$ onto $\mathcal{S}^{[\eta]}_{[\omega]}$.
 
 \begin{example}\label{CGS}
Let $\mu, \tau >0$. We now define the classical Gelfand-Shiov spaces $\Sigma^{\mu}_{\tau}$ and $\mathcal{S}^{\mu}_{\tau}$, already considered in the introduction. For $k >0$ we write $\mathcal{S}^{\mu,k}_{\tau,k}$  as the Banach space consisting of all $\varphi \in C^\infty(\mathbb{R})$ such that
\begin{equation}
\label{GScl}
\| \varphi\|_{\mathcal{S}^{\mu,k}_{\tau,k}} = \sup_{p,q \in \N} \sup_{x \in \R} \frac{|\varphi^{(p)}(x) x^q|}{k^{p+q}p!^\mu q!^\tau} < \infty.
\end{equation}
We set
$$
\Sigma^{\mu}_{\tau}=\varprojlim_{k \to 0^+} \mathcal{S}^{\mu,k}_{\tau,k} \qquad \mbox{and}
\qquad \mathcal{S}^{\mu}_{\tau}=\varinjlim_{k \to \infty} \mathcal{S}^{\mu,k}_{\tau,k} .
$$
Then, $\Sigma^\mu_\tau = \mathcal{S}^{(t^{1/\mu})}_{(t^{1/\tau})}$ and $\mathcal{S}^\mu_\tau = \mathcal{S}^{\{t^{1/\mu}\}}_{\{t^{1/\tau}\}}$ as locally convex spaces \cite[Corollary 2.5]{C-C-K-CharGelfandShilovFourier}\footnote{In \cite[Corollary 2.5]{C-C-K-CharGelfandShilovFourier} only the Roumieu case is considered, but the Beurling case can be treated similarly.}.
 \end{example}

\begin{remark}
Let $\omega$ and $\eta$ be two weight functions. Characterizing the non-triviality of the space $\mathcal{S}_{[\eta]}^{[\omega]}$ in terms of $\omega$ and $\eta$ appears to be an open problem. It is clear that $\mathcal{S}_{[\eta]}^{[\omega]}$ is non-trivial if either $\omega$ or $\eta$ is non-quasianalytic.  As already mentioned in the introduction, $\Sigma^{\mu}_{\tau}$ is non-trivial if and only if $ \mu + \tau >1$, while $\mathcal{S}^{\mu}_{\tau}$ is non-trivial if and only $\mu + \tau \geq 1$ (cf.\ \cite[Section 8]{G-S-GenFunc2}).
\end{remark}

\section{Power series spaces}
\label{sec:PowerSeries}
Let $I$ be a countable index set and let $\alpha = (\alpha_i)_{i \in I}$ be a sequence of non-negative numbers. 
Given $r \in \R$, we write $\Lambda_r^\alpha(I)$ for the Banach space consisting of all  $c = (c_i)_{i \in I} \in \C^I$ such that 
$$
\| c \|_{\Lambda_r^\alpha(I)} = 
\sup_{i \in I} |c_i| e^{r\alpha_i}  < \infty.
$$
We define the Fr\'echet spaces 
$$
\Lambda_\infty(I;\alpha) = \varprojlim_{r \to \infty} \Lambda_r^\alpha(I), \qquad \Lambda_0(I;\alpha) = \varprojlim_{r \to 0^-} \Lambda_r^\alpha(I).
$$
 
By an \emph{exponent sequence}, we mean  a non-decreasing sequence $\alpha = (\alpha_n)_{n \in \N}$ of non-negative numbers such that $\alpha_n \rightarrow \infty$ as $n \to \infty$. We define  the \emph{power series (of infinite and finite type)} \cite[Chapter 29]{M-V-IntroFuncAnal} as
$$
\Lambda_\infty(\alpha) =  \Lambda_\infty(\N;\alpha), \qquad \Lambda_0(\alpha) =  \Lambda_0(\N;\alpha).
$$
The space $\Lambda_\infty(\alpha)$ is nuclear if and only if $\log n= O(\alpha_n)$, while $\Lambda_0(\alpha)$ is nuclear if and only if $\log n = o(\alpha_n)$ \cite[Proposition 29.6]{M-V-IntroFuncAnal}. 
The  sequence $\alpha$ is said to be \emph{stable} if $\alpha_{2n} = O(\alpha_{n})$. 
We call $\Lambda_\theta(\alpha)$, $\theta \in \{0, \infty\}$, stable if $\alpha$ is so. 

Let $\alpha$ and $\beta$ be exponent sequences and let $\theta \in \{0, \infty\}$. 
Then, $\Lambda_\theta(\alpha) \subseteq \Lambda_\theta(\beta)$ continuously if  $\beta= O(\alpha)$. 
In particular, $\Lambda_\theta(\alpha) = \Lambda_\theta(\beta)$ as Fr\'echet spaces if $\alpha \asymp \beta$.

We will make use of the following Pe\l czynski-Vogt decomposition type result.

\begin{proposition}[{\cite[Satz 1.4]{V-IsomorphPowSp}}] \label{iso-sequence}
Let $\Lambda_\theta(\alpha)$, $\theta  \in \{0, \infty\}$, be a nuclear stable power series space. If a Fr\'echet space $E$ is isomorphic to a complemented subspace of  $\Lambda_\theta(\alpha)$  and  $\Lambda_\theta(\alpha)$ is isomorphic to a complemented subspace of $E$, then $E \cong  \Lambda_\theta(\alpha)$.
\end{proposition}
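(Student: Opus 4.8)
The plan is to treat this as a Cantor--Schr\"oder--Bernstein statement for power series spaces. My first instinct is the Pe\l czynski decomposition method: put $\Lambda := \Lambda_\theta(\alpha)$, write $\Lambda \cong E \times V$ and $E \cong \Lambda \times W$, and try to telescope. One quickly sees that the usual absorption hypothesis $\Lambda \cong \Lambda^{\N}$ (countable product) is unavailable here: any continuous seminorm on a countable product depends on only finitely many factors, so $\Lambda^{\N}$ carries no continuous norm, whereas each seminorm $c \mapsto \sup_n |c_n| e^{r \alpha_n}$ on $\Lambda_\theta(\alpha)$ is a norm. Hence $\Lambda_\theta(\alpha) \not\cong \Lambda_\theta(\alpha)^{\N}$ and the purely categorical argument stalls; the proof has to exploit the finer structure of stable power series spaces.

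The route I would follow rests on two structural ingredients. \textbf{(A)} Every infinite-dimensional complemented subspace of a nuclear stable power series space $\Lambda_\theta(\alpha)$ is isomorphic to a power series space $\Lambda_\theta(\beta)$ of the same type, with $\beta$ chosen so that $\alpha_n \leq \beta_n$ for all $n$ (equivalently $\Lambda_\theta(\beta) \hookrightarrow \Lambda_\theta(\alpha)$ continuously): such a complemented subspace has a basis, which by Dragilev's quasi-equivalence property for power series spaces may be normalized to be (equivalent to) a subsequence $(e_{n_k})_k$ of the canonical basis, and the corresponding exponent sequence $(\alpha_{n_k})_k$ --- a subsequence of the increasing sequence $\alpha$ --- satisfies $\alpha_{n_k} \geq \alpha_k$. \textbf{(B)} If $\Lambda_\theta(\gamma) \cong \Lambda_\theta(\alpha)$ and $\alpha$ is stable, then $\gamma \asymp \alpha$ (and $\gamma$ is again stable): this is the classical rigidity of stable power series spaces, obtained from the diametral dimension (or an $\varepsilon$-entropy) invariant, which recovers $\alpha$ up to equivalence.

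Granting (A) and (B), the argument closes immediately. Applying (A) to the complemented inclusion $E \hookrightarrow \Lambda_\theta(\alpha)$ gives $E \cong \Lambda_\theta(\beta)$ with $\alpha = O(\beta)$. Applying (A) again, now to the complemented inclusion $\Lambda_\theta(\alpha) \hookrightarrow E \cong \Lambda_\theta(\beta)$, gives $\Lambda_\theta(\alpha) \cong \Lambda_\theta(\gamma)$ with $\beta = O(\gamma)$. By (B), $\gamma \asymp \alpha$, so $\alpha = O(\beta) = O(\gamma) = O(\alpha)$ and hence $\alpha \asymp \beta$; by the comparison remarks in Section~\ref{sec:PowerSeries} this yields $E \cong \Lambda_\theta(\beta) = \Lambda_\theta(\alpha)$, as claimed.

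The main obstacle is ingredient (A), and within it the point that complemented subspaces of $\Lambda_\theta(\alpha)$ carry a basis --- not something to take for granted for general nuclear Fr\'echet spaces, and the real content one must supply (this is where stability of $\alpha$ enters decisively). A few routine matters also remain: normalizing so that $\alpha_0 = 0 < \alpha_1$ by deleting one coordinate (which changes $\Lambda_\theta(\alpha)$ only up to isomorphism), tracking the harmless rescaling $\Lambda_\theta(\alpha) = \Lambda_\theta(c\alpha)$, verifying via property (DN)/($\Omega$) that the type $\theta$ is preserved in (A), and confirming that stability survives the passage to subsequences and rearrangements; none of these affects the shape of the argument.
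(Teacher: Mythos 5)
The paper offers no proof of this proposition: it is quoted verbatim from Vogt \cite[Satz 1.4]{V-IsomorphPowSp}, so the comparison has to be with Vogt's argument. Your reduction to ingredients (A) and (B) is cleanly organized, and (B) is genuinely available (it is \cite[Proposition 29.1]{M-V-IntroFuncAnal}, the same fact the paper invokes later for Theorem \ref{iso-BB}, and it needs no stability). The fatal problem is (A). The assertion that every infinite-dimensional complemented subspace of a nuclear stable power series space has a basis is not a deferrable lemma: it is an instance of the Mityagin--Henkin problem on bases in complemented subspaces of nuclear Fr\'echet spaces with basis, which is open even for $s=\Lambda_\infty(\log(1+n))$. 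You flag it yourself as ``the real content one must supply,'' but supplying it would be a major theorem rather than a routine verification, and as written your argument rests on an unproved statement that is independent of (and in a sense stronger than) the one to be established. The whole point of the hypothesis that $\Lambda_\theta(\alpha)$ also embeds complementedly into $E$ is to circumvent exactly this: Vogt's proof never produces a basis of $E$ beforehand, but characterizes $\Lambda_\theta(\alpha)$ among nuclear Fr\'echet spaces via graded linear topological invariants of $(DN)$/$(\Omega)$ type relative to $\alpha$ --- which pass to complemented subspaces and are forced by the two complemented embeddings --- together with splitting arguments; stability of $\alpha$ enters there, not through a basis.

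There is a second, independent defect. Even granting (A) for the first application, your second application of (A) takes $\Lambda_\theta(\beta)$, with $\beta$ a subsequence of $\alpha$, as the ambient space, and you dismiss as routine that stability survives passage to subsequences. It does not: $\alpha_n=\log(1+n)$ is stable, yet along $n_k=2^{2^k}$ the subsequence $\beta_k=\alpha_{n_k}\sim 2^k\log 2$ satisfies $\beta_{2k}/\beta_k\to\infty$. So $\Lambda_\theta(\beta)$ need not be stable and (A) cannot be reapplied to it; the related claim that a basis of a complemented subspace is equivalent to a subsequence of the canonical basis also requires quasi-equivalence-type theorems whose hypotheses you have not checked for $\beta$. (Your opening observation that the naive Pe\l czynski absorption $\Lambda\cong\Lambda^{\N}$ fails because power series spaces carry continuous norms is correct and well taken; it just shows that the substitute you propose is not the one that actually works.)
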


Let $\alpha$ and $\beta$ be   exponent sequences and let $\theta  \in \{0, \infty\}$. We set
$$
\Lambda_\theta(\alpha,\beta) = \Lambda_\theta(\N^2;  (\alpha_k+ \beta_n)_{(k,n) \in \N^2}).
$$
 We define the sequence $\alpha \sharp \beta$  as the non-decreasing rearrangement of the set $\{\alpha_k+ \beta_n \, | \, (k,n) \in \N^2\}$. Then,
\begin{equation}
\Lambda_\theta(\alpha,\beta) \cong \Lambda_\theta(\alpha \sharp \beta).
\label{basic}
\end{equation}

We now study some properties of the  $\sharp$-product. Given an exponent sequence $\alpha$, we denote by $\nu_\alpha$ the counting function of $\alpha$, i.e.,
$$
\nu_\alpha(s) = \sum_{\alpha_n \leq s} 1, \qquad s \geq 0.
$$

\begin{lemma} \label{biglittleoh}
Let $\alpha$ and $\beta$ be exponent sequences. Then, $\alpha = O(\beta)$ ($\alpha = o(\beta)$) if and only if for some $L > 0$ (for all $L > 0$) 
$$
\nu_\beta(s) \leq \nu_\alpha(Ls), \qquad \forall s \geq 0.
$$
\end{lemma}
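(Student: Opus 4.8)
The plan is entirely elementary and rests on the order‑reversing duality between an exponent sequence $\gamma$ and its counting function $\nu_\gamma$: since $\gamma$ is non-decreasing, $\{n \in \N : \gamma_n \le s\}$ is an initial segment of $\N$ (finite because $\gamma_n \to \infty$) with exactly $\nu_\gamma(s)$ elements, so that, writing $\N = \{1, 2, \dots\}$ for definiteness,
$$
\gamma_n \le s \quad \Longleftrightarrow \quad \nu_\gamma(s) \ge n, \qquad n \in \N, \ s \ge 0 .
$$
I would record this equivalence first; each implication in the lemma then reduces to plugging a well‑chosen value of $s$ into it and reading off the resulting inequality.

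\emph{Beurling case.} Assume $\alpha = O(\beta)$, say $\alpha_n \le C\beta_n$ for all $n$, and set $L := C$. For $s \ge 0$ put $n := \nu_\beta(s)$, which we may assume is $\ge 1$; then $\beta_n \le s$ by the equivalence, hence $\alpha_n \le C\beta_n \le Ls$, and therefore $\nu_\alpha(Ls) \ge n = \nu_\beta(s)$. Conversely, assume $\nu_\beta(s) \le \nu_\alpha(Ls)$ for all $s \ge 0$ and fix $n \in \N$; taking $s := \beta_n$ and using $\nu_\beta(\beta_n) \ge n$ (a special case of the equivalence) gives $\nu_\alpha(L\beta_n) \ge n$, i.e.\ $\alpha_n \le L\beta_n$. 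As $n$ was arbitrary, $\alpha = O(\beta)$.

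\emph{Roumieu case.} I would run the same computation with ``for all $n$'' and ``for all $s$'' replaced by ``for $n$ large'' and ``for $s$ large''. If $\alpha = o(\beta)$ and $L > 0$ is given, pick $N$ with $\alpha_n \le L\beta_n$ for all $n \ge N$; then the above yields $\nu_\alpha(Ls) \ge \nu_\beta(s)$ as soon as $\nu_\beta(s) \ge N$, i.e.\ for all $s \ge \beta_N$. Conversely, evaluating the hypothesis at $s := \beta_n$ for $n$ large gives $\alpha_n \le L\beta_n$, so $\limsup_{n \to \infty} \alpha_n/\beta_n \le L$; since $L > 0$ was arbitrary, $\alpha = o(\beta)$.

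The point I expect to be the only delicate one, and would check carefully, is the quantifier on $s$ in the Roumieu statement. In the Beurling case the finitely many small indices are harmlessly absorbed into the single constant $L$, so the inequality genuinely holds for all $s \ge 0$. In the Roumieu case, however, the inequality is demanded for \emph{every} $L > 0$ simultaneously, and then the small values of $s$ cannot all be covered: for instance $\alpha_n = c + \log n$, $\beta_n = n$ with $c$ large has $\alpha = o(\beta)$ while $\nu_\beta(s) > \nu_\alpha(s)$ for some small $s$. Thus in the Roumieu case ``$\forall s \ge 0$'' is to be read as ``for all $s$ sufficiently large'' (equivalently, after discarding finitely many initial terms), which is exactly what the argument above establishes.
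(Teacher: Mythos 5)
Your proof is correct and is exactly the elementary counting-function duality the authors presumably had in mind when they wrote ``Clear''; it supplies all the details the paper omits. Your closing observation about the quantifier is also well taken: as stated, ``$\forall s\geq 0$'' cannot literally hold for every $L>0$ in the Roumieu case (your example $\alpha_n=c+\log n$, $\beta_n=n$ shows this), and indeed the paper itself only ever uses the lemma in the relaxed form ``$\forall s\geq s_0$'' (see the proof of Lemma~\ref{explicit}), for which your argument is exactly what is needed.
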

\begin{proof}
Clear.
\end{proof}

\begin{lemma}\label{stablec} 
An exponent sequence $\alpha$ is stable if and only if there is $L > 0$ such that 
$$
2\nu_\alpha(s) \leq \nu_\alpha(Ls), \qquad \forall s \geq 0.
$$
\end{lemma}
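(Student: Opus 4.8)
The plan is to use that the counting function $\nu_\alpha$ is, up to the usual care with jumps and with the convention on $\N$, the generalized inverse of the non-decreasing sequence $\alpha$: for every $s \geq 0$ and every $n$,
$$
\nu_\alpha(s) \geq n \quad \Longleftrightarrow \quad \alpha_n \leq s .
$$
Once this is recorded, both directions of the lemma become a direct transcription between the estimate $\alpha_{2n} \leq C\alpha_n$ and the estimate $2\nu_\alpha(s) \leq \nu_\alpha(Ls)$, with $C$ and $L$ playing interchangeable roles. (I take $\N = \{1, 2, \dots\}$; with the other convention the displayed equivalence is shifted by one index, which only costs a harmless enlargement of the relevant constant.)

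For the implication from stability to the counting estimate, suppose $\alpha_{2n} \leq C\alpha_n$ for all $n$ and put $L = C$. Fix $s \geq 0$ and set $m = \nu_\alpha(s)$. If $m = 0$ there is nothing to prove; otherwise $\alpha_m \leq s$, so $\alpha_{2m} \leq C\alpha_m \leq Cs = Ls$, whence $\nu_\alpha(Ls) \geq 2m = 2\nu_\alpha(s)$ by the inverse relation. For the converse, suppose $2\nu_\alpha(s) \leq \nu_\alpha(Ls)$ for all $s \geq 0$. Fix $n$ and evaluate at $s = \alpha_n$: since $\nu_\alpha(\alpha_n) \geq n$, we get $\nu_\alpha(L\alpha_n) \geq 2\nu_\alpha(\alpha_n) \geq 2n$, which by the inverse relation says exactly $\alpha_{2n} \leq L\alpha_n$. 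Hence $\alpha$ is stable.

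I do not anticipate a genuine obstacle, only a bit of bookkeeping at the smallest indices: reading ``$\alpha$ is stable'' as ``$\alpha_{2n} \leq C\alpha_n$ for all $n$'' makes both computations go through verbatim, and this reading automatically forces $\alpha_n > 0$ for all $n$ (a zero term would propagate out to infinity by monotonicity, contradicting $\alpha_n \to \infty$), which is consistent with the fact that the counting estimate at $s = 0$ forces $\nu_\alpha(0) = 0$. One must also fix once and for all whether $\N$ starts at $0$ or at $1$ when applying the inverse relation, but this only shifts an index and is absorbed into the constants.
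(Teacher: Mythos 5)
Your proof is correct and is precisely the direct verification that the paper omits (its proof is simply ``Clear.''): the equivalence $\nu_\alpha(s)\geq n \Leftrightarrow \alpha_n\leq s$ is the intended mechanism, and both implications follow from it exactly as you describe. Your closing remark about the edge case is also apt --- if one allowed $\alpha_1=0$ and read stability only asymptotically, the displayed inequality would fail at $s=0$, so the ``for all $n$'' reading of $\alpha_{2n}=O(\alpha_n)$ (or, equivalently, restricting to $s\geq s_0$ as the paper in fact does when it applies the lemma) is needed for the statement to hold literally.
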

\begin{proof}
Clear.
\end{proof}

\begin{lemma} \label{fundamental}
Let $\alpha$ and $\beta$ be exponent sequences. Then,
$$
\nu_\alpha(s/2) \nu_\beta(s/2) \leq \nu_{\alpha \sharp \beta}(s) \leq \nu_\alpha(s) \nu_\beta(s),  \qquad  \forall s \geq 0.
$$
\end{lemma}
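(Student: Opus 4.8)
The plan is to unwind the definition of the $\sharp$-product and reduce everything to elementary counting of lattice points. The key observation is that since $\alpha\sharp\beta$ is, by definition, the non-decreasing rearrangement of the multiset $\{\alpha_k+\beta_n \,|\, (k,n)\in\N^2\}$, its counting function satisfies
$$
\nu_{\alpha\sharp\beta}(s) = \#\{(k,n)\in\N^2 \,|\, \alpha_k + \beta_n \leq s\},
$$
where the pairs are counted with multiplicity (i.e.\ each pair $(k,n)$ contributes $1$, irrespective of coincidences among the values $\alpha_k+\beta_n$). Establishing this identity carefully is the one point that deserves a line of justification; after that the two inequalities are immediate.

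For the upper bound, I would note that both $\alpha$ and $\beta$ are non-negative, so $\alpha_k + \beta_n \leq s$ forces $\alpha_k \leq s$ and $\beta_n \leq s$. Hence
$$
\{(k,n)\in\N^2 \,|\, \alpha_k+\beta_n\leq s\} \subseteq \{k\in\N \,|\, \alpha_k \leq s\}\times\{n\in\N \,|\, \beta_n\leq s\},
$$
and taking cardinalities yields $\nu_{\alpha\sharp\beta}(s) \leq \nu_\alpha(s)\,\nu_\beta(s)$.

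For the lower bound, conversely, if $\alpha_k \leq s/2$ and $\beta_n \leq s/2$ then $\alpha_k+\beta_n \leq s$, so
$$
\{k\in\N \,|\, \alpha_k \leq s/2\}\times\{n\in\N \,|\, \beta_n\leq s/2\} \subseteq \{(k,n)\in\N^2 \,|\, \alpha_k+\beta_n\leq s\},
$$
and taking cardinalities gives $\nu_\alpha(s/2)\,\nu_\beta(s/2) \leq \nu_{\alpha\sharp\beta}(s)$. I do not anticipate any real obstacle here; the only thing to be slightly careful about is the multiplicity bookkeeping in the identity for $\nu_{\alpha\sharp\beta}$, and the (harmless) fact that $\nu_\alpha(s)$ or $\nu_\beta(s)$ could be infinite only if some $\alpha_k$ or $\beta_n$ repeats infinitely often, which the definition of exponent sequence (a sequence tending to $\infty$) rules out, so all counts involved are finite.
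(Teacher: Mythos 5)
Your proposal is correct and follows essentially the same route as the paper: both reduce to the identity $\nu_{\alpha\sharp\beta}(s)=\#\{(k,n)\in\N^2 \mid \alpha_k+\beta_n\leq s\}$ and then compare this count with the product counts via the obvious inclusions (the paper phrases this as a double sum $\sum_{\beta_n\leq s}\nu_\alpha(s-\beta_n)$, which is the same bookkeeping). No gaps.
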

\begin{proof}
 Let $s \geq 0$ be arbitrary. Note that
$$
\nu_{\alpha \sharp \beta}(s) = \sum_{\alpha_k + \beta_n \leq s} 1 = \sum_{\beta_n \leq s} \sum_{\alpha_k \leq s- \beta_n} 1 =  \sum_{\beta_n \leq s} \nu_\alpha(s- \beta_n).
$$
Hence,
$$
\nu_{\alpha \sharp \beta}(s) \leq  \nu_\alpha(s) \sum_{\beta_n \leq s} 1 =  \nu_\alpha(s) \nu_\beta(s)
$$
and
$$
\nu_{\alpha \sharp \beta}(s) \geq  \sum_{\beta_n \leq s/2} \nu_\alpha(s- \beta_n) \geq  \nu_\alpha(s/2) \sum_{\beta_n \leq s/2} 1 = \nu_\alpha(s/2) \nu_\beta(s/2).
$$
\end{proof}

\begin{lemma}\label{properties-sharp}
Let $\alpha$ and $\beta$ be exponent sequences and let $\theta \in \{0, \infty\}$.
\begin{itemize}
\item[$(i)$] If both $\Lambda_\theta(\alpha)$ and  $\Lambda_\theta(\beta)$ are nuclear, then $\Lambda_\theta(\alpha \sharp \beta)$ is also nuclear.
\item[$(ii)$] If either $\Lambda_\theta(\alpha)$ or $\Lambda_\theta(\beta)$ is stable, then $\Lambda_\theta(\alpha \sharp \beta)$ is also stable.
\end{itemize}
\end{lemma}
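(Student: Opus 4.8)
\emph{Strategy.} The plan is to reduce both statements to the two-sided counting-function estimate of Lemma \ref{fundamental}. Write $\nu$ for the counting function of the exponent sequence $(\log n)_{n}$, so that $\nu(t)=\lfloor e^{t}\rfloor$ for $t\ge0$; in particular $\nu$ is supermultiplicative, i.e.\ $\nu(a)\nu(b)\le\nu(a+b)$ for all $a,b\ge0$ (since $\lfloor e^{a}\rfloor\lfloor e^{b}\rfloor\le e^{a+b}$ and the left-hand side is a non-negative integer). Moreover, combining the nuclearity criterion \cite[Proposition 29.6]{M-V-IntroFuncAnal} (namely $\Lambda_\infty(\gamma)$ is nuclear iff $\log n=O(\gamma_n)$, and $\Lambda_0(\gamma)$ is nuclear iff $\log n=o(\gamma_n)$) with Lemma \ref{biglittleoh} applied to $(\log n)_n$ shows: $\Lambda_\infty(\gamma)$ is nuclear iff $\nu_\gamma(s)\le\nu(Ls)$ for all $s\ge0$ and some $L>0$; and $\Lambda_0(\gamma)$ is nuclear iff $\nu_\gamma(s)\le\nu(Ls)$ for all $s\ge0$ and every $L>0$. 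Throughout I use that $\alpha\sharp\beta$ is again an exponent sequence and that counting functions are non-decreasing.

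\emph{Part $(i)$.} Suppose $\Lambda_\theta(\alpha)$ and $\Lambda_\theta(\beta)$ are nuclear. By the reformulated criterion there are $L_1,L_2>0$ with $\nu_\alpha(s)\le\nu(L_1s)$ and $\nu_\beta(s)\le\nu(L_2s)$ for all $s\ge0$, and in the case $\theta=0$ the numbers $L_1,L_2$ may be chosen arbitrarily small. The upper bound in Lemma \ref{fundamental} together with supermultiplicativity of $\nu$ yields
$$
\nu_{\alpha\sharp\beta}(s)\le\nu_\alpha(s)\nu_\beta(s)\le\nu(L_1s)\,\nu(L_2s)\le\nu\bigl((L_1+L_2)s\bigr),\qquad s\ge0.
$$
For $\theta=\infty$ this is the required bound with $L=L_1+L_2$, so $\Lambda_\infty(\alpha\sharp\beta)$ is nuclear. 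For $\theta=0$, since $L_1+L_2$ can be made arbitrarily small, the bound $\nu_{\alpha\sharp\beta}(s)\le\nu(Ls)$ holds for every $L>0$, so $\Lambda_0(\alpha\sharp\beta)$ is nuclear.

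\emph{Part $(ii)$.} By the symmetry of the $\sharp$-product it suffices to treat the case where $\alpha$ is stable. By Lemma \ref{stablec} there is $L\ge1$ (replacing $L$ by $\max(L,1)$ if necessary) with $2\nu_\alpha(s)\le\nu_\alpha(Ls)$ for all $s\ge0$. Using both inequalities of Lemma \ref{fundamental} and the monotonicity of $\nu_\beta$,
$$
2\nu_{\alpha\sharp\beta}(s)\le2\nu_\alpha(s)\nu_\beta(s)\le\nu_\alpha(Ls)\nu_\beta(s)\le\nu_\alpha(Ls)\nu_\beta(Ls)\le\nu_{\alpha\sharp\beta}(2Ls),\qquad s\ge0,
$$
so $\alpha\sharp\beta$ is stable by Lemma \ref{stablec} (with constant $2L$), and hence $\Lambda_\theta(\alpha\sharp\beta)$ is stable.

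I expect the only genuinely delicate point to be the reformulation of nuclearity in the first paragraph; this is exactly the content of Lemma \ref{biglittleoh}, so it costs essentially nothing, and both parts then follow mechanically from Lemma \ref{fundamental}. Part $(i)$ could alternatively be obtained from $\Lambda_\theta(\alpha\sharp\beta)\cong\Lambda_\theta(\alpha,\beta)$ and the fact that a completed projective tensor product of nuclear spaces is nuclear, but the counting-function argument is more uniform and also settles part $(ii)$.
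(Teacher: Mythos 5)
Your proof is correct and follows exactly the route the paper takes: the paper's proof of this lemma consists precisely of invoking Lemmas \ref{biglittleoh}, \ref{stablec}, and \ref{fundamental} together with the nuclearity criterion of \cite[Proposition 29.6]{M-V-IntroFuncAnal}, and your argument simply writes out the counting-function estimates that make those reductions explicit. The supermultiplicativity observation for $\nu(t)=\lfloor e^{t}\rfloor$ and the chain of inequalities in part $(ii)$ are exactly the intended details.
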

\begin{proof}
$(i)$ Recall that, given an exponent sequence $\gamma$, $\Lambda_\infty (\gamma)$ ($\Lambda_0(\gamma)$) is nuclear if and only if  $\log n= O(\gamma_n)$ ($\log n = o(\gamma_n)$). Hence, the result follows from Lemmas \ref{biglittleoh} and \ref{fundamental}. \\
$(ii)$ This follows from Lemmas \ref{stablec} and \ref{fundamental}. \\
\end{proof}

Let $\omega$ be a weight function. We define $ \alpha(\omega)= (\omega(n))_{n \in \N}$. Since $\omega$ satisfies $(\alpha)$ and $(\gamma)$ $(\{\gamma\})$,  $\alpha(\omega)$ is a stable exponent sequence that satisfies
 $\log n= O(\alpha(\omega)_n)$ ($\log n = o(\alpha(\omega)_n)$). 

Given two weight functions $\omega$ and $\eta$, we define $\alpha(\omega,\eta) =((\omega^{-1}\eta^{-1})^{-1}(n))_{n \in \N}$. Due to condition $(\alpha)$, $\alpha(\omega,\eta)$ is a stable exponent sequence.

 \begin{lemma}\label{explicit}
Let $\omega$ and $\eta$ be two weight functions. Then,  
$$
\alpha(\omega) \sharp \alpha(\eta) \asymp \alpha(\omega,\eta).
$$ 
 \end{lemma}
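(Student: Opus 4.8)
The plan is to translate the relation $\asymp$ into a two-sided comparison of counting functions and then exploit the sandwich estimate of Lemma \ref{fundamental}. By definition, $\alpha(\omega)\sharp\alpha(\eta) \asymp \alpha(\omega,\eta)$ means $\alpha(\omega)\sharp\alpha(\eta) = O(\alpha(\omega,\eta))$ and $\alpha(\omega,\eta) = O(\alpha(\omega)\sharp\alpha(\eta))$, and by Lemma \ref{biglittleoh} each of these is equivalent to an inequality between the corresponding counting functions with a rescaled argument. So the first step is to identify these counting functions explicitly. Since $\omega$ is continuous and increasing, $\nu_{\alpha(\omega)}(s) = \#\{n \in \N : \omega(n) \le s\} = \#\{n \in \N : n \le \omega^{-1}(s)\}$, whence $\omega^{-1}(s) \le \nu_{\alpha(\omega)}(s) \le \omega^{-1}(s) + 1$ for all sufficiently large $s$, and similarly for $\eta$. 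For $\alpha(\omega,\eta)$, using that $\omega^{-1}\eta^{-1}$ is increasing, one gets $\nu_{\alpha(\omega,\eta)}(s) = \#\{n \in \N : n \le \omega^{-1}(s)\eta^{-1}(s)\}$, hence $\omega^{-1}(s)\eta^{-1}(s) \le \nu_{\alpha(\omega,\eta)}(s) \le \omega^{-1}(s)\eta^{-1}(s) + 1$ for all large $s$. Combined with the sandwich $\nu_{\alpha(\omega)}(s/2)\nu_{\alpha(\eta)}(s/2) \le \nu_{\alpha(\omega)\sharp\alpha(\eta)}(s) \le \nu_{\alpha(\omega)}(s)\nu_{\alpha(\eta)}(s)$ from Lemma \ref{fundamental}, the problem reduces to comparing $s \mapsto \omega^{-1}(s/2)\eta^{-1}(s/2)$ with $s \mapsto \omega^{-1}(s)\eta^{-1}(s)$ up to a dilation of the argument.

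The key ingredient will be that condition $(\alpha)$ on $\omega$ forces a doubling-type estimate on the inverse: if $\omega(2t) \le C\omega(t)$ for all large $t$ (which is precisely $(\alpha)$), then substituting $t = \omega^{-1}(s)$ and applying the increasing function $\omega^{-1}$ yields $\omega^{-1}(Cs) \ge 2\,\omega^{-1}(s)$ for all large $s$. Applying this to both $\omega$ and $\eta$ and letting $M$ be the larger of the two constants, monotonicity of $\omega^{-1}$ and $\eta^{-1}$ gives
\[
\omega^{-1}(Ms)\,\eta^{-1}(Ms) \ge 4\,\omega^{-1}(s)\,\eta^{-1}(s), \qquad s \text{ large}.
\]

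With this in hand both inclusions follow. For $\alpha(\omega)\sharp\alpha(\eta) = O(\alpha(\omega,\eta))$, by Lemma \ref{biglittleoh} it suffices to find $L$ with $\nu_{\alpha(\omega,\eta)}(s) \le \nu_{\alpha(\omega)\sharp\alpha(\eta)}(Ls)$; taking $L = 2M$ and using the lower bound of Lemma \ref{fundamental} together with $\nu_{\alpha(\omega)}(t) \ge \omega^{-1}(t)$ gives $\nu_{\alpha(\omega)\sharp\alpha(\eta)}(Ls) \ge \omega^{-1}(Ms)\eta^{-1}(Ms) \ge 4\,\omega^{-1}(s)\eta^{-1}(s) \ge 2\,\omega^{-1}(s)\eta^{-1}(s) \ge \nu_{\alpha(\omega,\eta)}(s)$ once $s$ is large enough that $\omega^{-1}(s)\eta^{-1}(s) \ge 1$. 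For the reverse inclusion, by Lemma \ref{biglittleoh} it suffices to find $L$ with $\nu_{\alpha(\omega)\sharp\alpha(\eta)}(s) \le \nu_{\alpha(\omega,\eta)}(Ls)$; taking $L = M$ and using the upper bound of Lemma \ref{fundamental} with $\nu_{\alpha(\omega)}(s) \le 2\,\omega^{-1}(s)$ (valid once $\omega^{-1}(s) \ge 1$) gives $\nu_{\alpha(\omega)\sharp\alpha(\eta)}(s) \le 4\,\omega^{-1}(s)\eta^{-1}(s) \le \omega^{-1}(Ms)\eta^{-1}(Ms) \le \nu_{\alpha(\omega,\eta)}(Ms)$. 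The small-$s$ behaviour is irrelevant, since $\asymp$ for exponent sequences depends only on tails, so the finitely many small values and the $+1$'s above are absorbed by enlarging constants.

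The only genuine subtlety, and the step I expect to demand the most care, is the bookkeeping between the loss of a factor $2$ in the argument of the counting functions in Lemma \ref{fundamental} and the gain of a factor $2$ in $\omega^{-1},\eta^{-1}$ coming from condition $(\alpha)$: one has to verify that the $\sharp$-sandwich and the doubling property of the inverse weights mesh together with compatible constants (and that the dilation constants $M$ produced by $(\alpha)$ for $\omega$ and for $\eta$ can be taken uniform). Everything else — rewriting the counting functions via $\omega^{-1}$ and $\eta^{-1}$ and the elementary floor estimates — is routine.
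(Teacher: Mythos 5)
Your proof is correct and follows essentially the same route as the paper: both arguments reduce the claim to counting functions via Lemma \ref{biglittleoh}, identify $\nu_{\alpha(\omega)}$, $\nu_{\alpha(\eta)}$, $\nu_{\alpha(\omega,\eta)}$ with $\omega^{-1}$, $\eta^{-1}$, $\omega^{-1}\eta^{-1}$ up to $\pm 1$, sandwich $\nu_{\alpha(\omega)\sharp\alpha(\eta)}$ using Lemma \ref{fundamental}, and absorb the factor-$2$ dilation and the $+1$'s using condition $(\alpha)$. The only cosmetic difference is that you extract the doubling estimate $2\omega^{-1}(s)\le\omega^{-1}(Cs)$ directly from $(\alpha)$, whereas the paper routes the same fact through the stability of the exponent sequences via Lemma \ref{stablec}.
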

 \begin{proof}
Since $\alpha(\omega,\eta)$  is stable, Lemma \ref{stablec} implies that there are $L > 0$ and $s_0 \geq 0$ such that
$$
\nu_{\alpha(\omega,\eta)}(s) + 1\leq 2\nu_{\alpha(\omega,\eta)}(s)  \leq \nu_{\alpha(\omega,\eta)}(Ls), \qquad \forall s \geq s_0.
$$
By Lemma \ref{fundamental}, we obtain that, for all $s \geq s_0$,
$$
\nu_{\alpha(\omega) \sharp \alpha(\eta)}(s) \leq \nu_{\alpha(\omega)}(s) \nu_{\alpha(\eta)}(s)  \leq \omega^{-1}(s)\eta^{-1}(s) \leq \nu_{\alpha(\omega,\eta)}(s) + 1\leq \nu_{\alpha(\omega,\eta)}(Ls).
$$
Hence, $\alpha(\omega,\eta) = O(\alpha(\omega) \sharp \alpha(\eta))$ by Lemma \ref{biglittleoh}. 

Since $\alpha(\omega)$ and $\alpha(\eta)$  are stable, Lemma \ref{stablec} implies that there are $L > 0$ and $s_0 \geq 0$ such that 
\begin{align*}
&&\nu_{\alpha(\omega)}(s) + 1\leq 2\nu_{\alpha(\omega)}(s)  \leq \nu_{\alpha(\omega)}(Ls), \qquad \forall s \geq s_0, \\
&&\nu_{\alpha(\eta)}(s) + 1\leq 2\nu_{\alpha(\eta)}(s)  \leq \nu_{\alpha(\eta)}(Ls), \qquad \forall s \geq s_0.
\end{align*}
By Lemma \ref{fundamental}, we obtain that, for all $s \geq s_0$,
\begin{align*}
 \nu_{\alpha(\omega,\eta)}(s) &\leq \omega^{-1}(s)\eta^{-1}(s) \leq (\nu_{\alpha(\omega)}(s) + 1)(\nu_{\alpha(\eta)}(s) + 1) \\
 & \leq \nu_{\alpha(\omega)}(Ls)\nu_{\alpha(\eta)}(Ls) \leq \nu_{\alpha(\omega) \sharp \alpha(\eta)}(2Ls).
\end{align*}
Hence, $\alpha(\omega) \sharp \alpha(\eta) = O(\alpha(\omega,\eta))$, as follows from Lemma \ref{biglittleoh}. 
\end{proof}

\begin{example}\label{exampleGStwist}
Consider  $\omega_{\mu,u}(t) = t^{\frac{1}{\mu}}\log(1+t)^u$ for $\mu >0$ and $u \in \R$. Then, for $\mu, \tau >0$ and $u, v \in \R$,
$$
 \alpha(\omega_{\mu,u}, \omega_{\tau,v}) \asymp  \alpha(\omega_{\mu +\tau, \frac{u \mu + v \tau}{\mu + \tau}}).
 $$ 
\end{example}

\section{Tools from time-frequency analysis}
\label{sec:TFAnalysis}
In this section, we introduce Gabor frames and Wilson bases;  see the book \cite{G-FoundationsTimeFreqAnal} for more information. The translation and modulation operators are denoted by $T_{x} f(t) = f(t - x)$ and $M_{\xi} f(t) = e^{2 \pi i \xi t} f(t)$, for $x, \xi \in \R$.

\subsection{Gabor frames}	
Let  $\psi \in L^{2}(\R)$ and $a, b > 0$. The set of time-frequency shifts
	\[ \mathcal{G}(\psi, a, b) = \{ T_{ak} M_{bn} \psi \mid (k,n) \in \Z^2 \} \]
is called a \emph{Gabor frame} for $L^{2}(\R)$ if there exist $A, B > 0$ such that
	\[ A \|f\|^{2}_{L^{2}} \leq \sum_{ (k,n) \in \Z^2} \left| (f, T_{ak}M_{bn} \psi)_{L^2}\right|^{2} \leq B \|f \|^{2}_{L^{2}} , \qquad \forall f  \in L^{2}(\R). \]
Let $\psi \in \mathcal{S}$. The \emph{analysis operator}
$$
C_\psi = C^{a,b}_\psi:  L^2(\R) \rightarrow \ell^2(\Z^2), ~  f \mapsto ((f, T_{ak}M_{bn} \psi)_{L^2})_{(k,n) \in \Z^2},
$$
and the \emph{synthesis operator}
$$
D_\psi = D^{a,b}_\psi: \ell^2(\Z^2) \rightarrow L^2(\R), ~  (c_{k,n})_{ (k,n) \in \Z^2} \mapsto \sum_{ (k,n) \in \Z^2} c_{k,n} T_{ak} M_{bn} \psi
$$
are continuous. 
Let $\gamma \in \mathcal{S}$. We call  \emph{$(\psi,\gamma)$ a pair of dual windows  (on $a\Z \times b\Z$)} if
\begin{equation}
D_\gamma \circ C_\psi = \operatorname{id}_{L^2(\R)}.
		\label{comp11}
\end{equation}
In such a case, $(\gamma, \psi)$ is also a pair of dual windows and both $\mathcal{G}(\psi, a, b)$ and $\mathcal{G}(\gamma, a, b)$ are Gabor frames. 

We will use the following characterization of pairs of dual windows,  known as the Wexler-Raz biorthogonality relations:
\begin{theorem}\cite[Theorem 7.3.1 and the subsequent remark]{G-FoundationsTimeFreqAnal}
		\label{l:WZBiOrthRel}
		Let $\psi, \gamma \in  \mathcal{S}$ and let $a,b > 0$. Then,  $(\psi,\gamma)$  is a pair of dual windows on $a\Z \times b\Z$ if and only if
		$$
		\frac{1}{ab}(  T_{\frac{k}{b}}M_{\frac{n}{a}}\psi, T_{\frac{k'}{b}} M_{\frac{n'}{a}} \gamma)_{L^{2}} = \delta_{k,k'} \delta_{n,n'}, \qquad \forall (k, n), (k,' n') \in \Z^2,
$$
or, equivalently,
\begin{equation}
\frac{1}{ab}C^{\frac{1}{b}, \frac{1}{a}}_\psi \circ D^{\frac{1}{b},\frac{1}{a}}_\gamma =  \operatorname{id}_{\ell^2\left( \Z^2 \right)}.
		\label{WR}
\end{equation}
\end{theorem}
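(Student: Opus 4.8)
Since the assertion is a classical theorem (\cite[Theorem 7.3.1 and the subsequent remark]{G-FoundationsTimeFreqAnal}), the plan is to reproduce the standard argument, whose engine is the \emph{fundamental identity of Gabor analysis} (FIGA) and the resulting Janssen representation. Write $S = S_{\gamma,\psi} := D^{a,b}_\gamma \circ C^{a,b}_\psi$ for the bounded operator on $L^2(\R)$ appearing in \eqref{comp11}. For $\psi,\gamma \in \mathcal{S}$ the Gabor coefficient sequence $((T_{k/b}M_{n/a}\psi, \gamma)_{L^2})_{(k,n)\in\Z^2}$ is rapidly decreasing, hence lies in $\ell^1(\Z^2)$, and FIGA --- which is the Poisson summation formula applied to the Schwartz function $(x,\xi) \mapsto V_\psi f(x,\xi)\,\overline{V_\gamma g(x,\xi)}$ on $\R^2$ (here $V_\varphi h$ is the short-time Fourier transform of $h$ with window $\varphi$) --- yields the absolutely convergent Janssen representation
$$
S = \frac{1}{ab}\sum_{(k,n)\in\Z^2} \epsilon_{k,n}\,(T_{k/b}M_{n/a}\psi, \gamma)_{L^2}\,T_{k/b}M_{n/a}
$$
on the adjoint lattice $\frac1b\Z\times\frac1a\Z$, where the $\epsilon_{k,n}$ are unimodular constants (coming from the $T$--$M$ commutation relations) with $\epsilon_{0,0}=1$. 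Establishing this identity --- i.e.\ the careful Poisson-summation step together with the phase bookkeeping relating $a\Z\times b\Z$ to its adjoint $\frac1b\Z\times\frac1a\Z$ --- is the main (and essentially the only nontrivial) obstacle; everything below is routine.

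Granting the Janssen representation, the equivalence is then formal. If the biorthogonality relations hold --- i.e.\ $(T_{k/b}M_{n/a}\psi,\gamma)_{L^2} = ab\,\delta_{k,0}\delta_{n,0}$ --- then all terms with $(k,n)\neq(0,0)$ drop out and $S = \operatorname{id}_{L^2(\R)}$, so $(\psi,\gamma)$ is a pair of dual windows. Conversely, if $S = \operatorname{id}_{L^2(\R)}$, then $\sum_{(k,n)}\epsilon_{k,n}(T_{k/b}M_{n/a}\psi,\gamma)_{L^2}\,T_{k/b}M_{n/a} = ab\,\operatorname{id}_{L^2(\R)}$; here I would invoke that the time-frequency shifts $\{T_{k/b}M_{n/a}:(k,n)\in\Z^2\}$ are linearly independent in $\mathcal{L}(L^2(\R))$ even with respect to $\ell^1$-summable coefficients (their Weyl symbols are pairwise distinct characters on $\R^2$, and an $\ell^1$-combination of distinct characters vanishes identically only if all coefficients are $0$), so comparison with the trivial expansion $\operatorname{id}_{L^2(\R)} = T_0M_0$ forces $\epsilon_{k,n}(T_{k/b}M_{n/a}\psi,\gamma)_{L^2} = ab\,\delta_{k,0}\delta_{n,0}$, hence $(T_{k/b}M_{n/a}\psi,\gamma)_{L^2} = ab\,\delta_{k,0}\delta_{n,0}$.

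Finally I would pass from the case $(k',n')=(0,0)$ to general $(k',n')$ and to the operator form \eqref{WR}. Since $T_{k'/b}M_{n'/a}$ is unitary, $(T_{k/b}M_{n/a}\psi, T_{k'/b}M_{n'/a}\gamma)_{L^2} = ((T_{k'/b}M_{n'/a})^{\ast}T_{k/b}M_{n/a}\psi, \gamma)_{L^2}$, and a short computation with the $T$--$M$ commutation relations gives $(T_{k'/b}M_{n'/a})^{\ast}T_{k/b}M_{n/a} = \zeta\,T_{(k-k')/b}M_{(n-n')/a}$ for a unimodular $\zeta$ which equals $1$ when $(k,n)=(k',n')$; therefore $\frac1{ab}(T_{k/b}M_{n/a}\psi, T_{k'/b}M_{n'/a}\gamma)_{L^2} = \frac{\zeta}{ab}(T_{(k-k')/b}M_{(n-n')/a}\psi,\gamma)_{L^2} = \delta_{k,k'}\delta_{n,n'}$, the factor $\zeta$ being irrelevant since the right-hand side vanishes unless $(k,n)=(k',n')$. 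Evaluating $\frac1{ab}C^{1/b,1/a}_\psi\circ D^{1/b,1/a}_\gamma$ on the standard basis vectors of $\ell^2(\Z^2)$ returns precisely the coefficients $\frac1{ab}(T_{k'/b}M_{n'/a}\gamma, T_{k/b}M_{n/a}\psi)_{L^2}$, so these biorthogonality relations are equivalent to $\frac1{ab}C^{1/b,1/a}_\psi\circ D^{1/b,1/a}_\gamma = \operatorname{id}_{\ell^2(\Z^2)}$, which is \eqref{WR}. (In the paper one would of course simply cite \cite[Theorem 7.3.1 and the subsequent remark]{G-FoundationsTimeFreqAnal}.)
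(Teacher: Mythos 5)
The paper gives no proof of this statement---it is quoted directly from Gr\"ochenig's book---and your argument is precisely the standard proof from that source: the Janssen representation of $S_{\gamma,\psi}$ on the adjoint lattice (valid here since $V_\psi\gamma$ is Schwartz, so the coefficients are in $\ell^1$), followed by the uniqueness of $\ell^1$-expansions in time-frequency shifts and the routine phase bookkeeping reducing general $(k',n')$ to $(0,0)$. The argument is correct and consistent with the cited reference.
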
		

Let $\omega$ and $\eta$ be two weight functions.  The space  $\mathcal{S}^{[\omega]}_{[\eta]}$ is called \emph{Gabor accessible} if there exist $\psi, \gamma \in \mathcal{S}^{[\omega]}_{[\eta]}$ and $a, b > 0$ such that $(\psi,\gamma)$  is a pair of dual windows on $a\Z \times b\Z$. We now use a fundamental result of Janssen \cite{Janssen} (see also \cite{B-J-GaborUnimodWindDecay})  to give a  growth condition on  $\omega$ and $\eta$ which ensures that  $\mathcal{S}^{[\omega]}_{[\eta]}$ is Gabor accessible.

\begin{proposition}\label{GA-1} Let $\omega$ and $\eta$ be two weight functions. The space $\mathcal{S}^{[\omega]}_{[\eta]}$ is Gabor accessible if $\omega(t) = o(t^2)$ and $\eta(t) = o(t^2)$ ($\omega(t) = O(t^2)$ and $\eta(t) = O(t^2)$).
\end{proposition}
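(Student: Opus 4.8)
The plan is to produce explicitly a pair of dual windows in $\mathcal{S}^{[\omega]}_{[\eta]}$ using the Wexler--Raz biorthogonality relations (Theorem \ref{l:WZBiOrthRel}) together with Janssen's result on Gabor systems generated by windows with prescribed time-frequency decay. First I would fix a specific smooth window $\psi_0$ of a convenient shape --- a natural candidate is a (suitably normalised) Gaussian $\psi_0(t) = e^{-\pi t^2}$ or, more safely, a window whose Fourier transform is compactly supported or itself Gaussian-like --- and recall that $\psi_0 \in \mathcal{S}^{\mu}_{\mu}$ for $\mu \geq 1/2$; in particular, since $\omega(t) = o(t^2)$ and $\eta(t) = o(t^2)$ (respectively $O(t^2)$), the Gaussian satisfies $\|\psi_0\|_{\eta,h} < \infty$ and $\|\widehat{\psi_0}\|_{\omega,h} < \infty$ for all $h > 0$ (resp.\ some $h>0$), so $\psi_0 \in \mathcal{S}^{[\omega]}_{[\eta]}$. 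The growth hypotheses on $\omega$ and $\eta$ are exactly what is needed here: the weight $e^{h\omega(x)}$ is dominated by $e^{\varepsilon x^2}$-type growth, which is controlled by the Gaussian decay.

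Next I would invoke Janssen's theorem \cite{Janssen}: for lattice parameters $a,b>0$ with $ab < 1$ chosen small enough, the Gabor system $\mathcal{G}(\psi_0,a,b)$ is a frame, and moreover the canonical dual window $\gamma_0 = (S^{a,b}_{\psi_0})^{-1}\psi_0$, where $S^{a,b}_{\psi_0} = D^{a,b}_{\psi_0}\circ C^{a,b}_{\psi_0}$ is the frame operator, inherits Gaussian-type time-frequency localisation --- concretely, $\gamma_0$ and $\widehat{\gamma_0}$ decay like $e^{-c x^2}$ for some $c>0$ (this is the content of the decay results of Janssen, and of \cite{B-J-GaborUnimodWindDecay}). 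Hence $\gamma_0 \in \mathcal{S}^{[\omega]}_{[\eta]}$ as well, again using $\omega(t), \eta(t) = o(t^2)$ (resp.\ $O(t^2)$). By construction, $(\psi_0,\gamma_0)$ satisfies $D^{a,b}_{\gamma_0}\circ C^{a,b}_{\psi_0} = \operatorname{id}_{L^2(\R)}$, i.e.\ \eqref{comp11} holds, so $\mathcal{S}^{[\omega]}_{[\eta]}$ is Gabor accessible by definition.

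The step I expect to be the main obstacle is the verification that the canonical dual window $\gamma_0$ genuinely lies in $\mathcal{S}^{[\omega]}_{[\eta]}$, rather than merely in $\mathcal{S}$. One must be careful about which precise version of Janssen's decay estimate is available --- whether it gives literal Gaussian decay of $\gamma_0$ and $\widehat{\gamma_0}$, or only decay faster than any polynomial, or subexponential decay in some modulation-space sense. In the Beurling case one needs decay beating $e^{h\omega(x)}$ for \emph{every} $h$, which (since $\omega(t)=o(t^2)$) follows from any fixed Gaussian bound $e^{-cx^2}$; in the Roumieu case $\omega(t)=O(t^2)$ only requires the Gaussian bound for \emph{some} exponent, which is weaker. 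So once the correct Gaussian-type decay statement for $\gamma_0$ is in hand --- invoking $ab<1$ small and the Gaussian window, where this is classical --- the weight estimates reduce to the elementary inequality that $e^{h\omega(x)} e^{-cx^2}$ is bounded for $\omega(t)=o(t^2)$ (all $h$) or $\omega(t)=O(t^2)$ ($h$ small), and similarly for $\eta$ after Fourier transform, using that the Fourier transform maps $\mathcal{S}^{[\omega]}_{[\eta]}$ onto $\mathcal{S}^{[\eta]}_{[\omega]}$. This completes the argument.
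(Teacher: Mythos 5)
Your proposal is correct and follows essentially the same route as the paper: take the Gaussian $\psi(x)=e^{-\pi x^2}$ on a lattice with $ab<1$, invoke Janssen's result (Proposition B in \cite{Janssen}) to get a dual window $\gamma$ with Gaussian time--frequency decay, i.e.\ $\gamma\in\mathcal{S}^{1/2}_{1/2}$, and then use the embedding $\mathcal{S}^{1/2}_{1/2}\subseteq\mathcal{S}^{[\omega]}_{[\eta]}$ guaranteed by the hypotheses $\omega(t),\eta(t)=o(t^2)$ (resp.\ $O(t^2)$). The one point you flag as a possible obstacle --- whether Janssen's estimate gives literal Gaussian decay of $\gamma$ and $\widehat{\gamma}$ --- is exactly what the cited reference supplies, so the argument goes through as you describe.
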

\begin{proof}
Since $\mathcal{S}^{1/2}_{1/2} \subseteq \mathcal{S}^{[\omega]}_{[\eta]}$, it suffices to show that $\mathcal{S}^{1/2}_{1/2}$ is Gabor accessible. Let $\psi(x) = e^{-\pi x^2}$, $x \in \R$, be the Gaussian and fix $a,b > 0$ with $ab <1$. Then,  $\psi \in  \mathcal{S}^{1/2}_{1/2}$ and, by \cite[Proposition B and its proof]{Janssen},  there exists $\gamma \in  \mathcal{S}^{1/2}_{1/2}$ such that $(\psi, \gamma)$ is a pair of dual windows on $a\Z \times b\Z$.  
\end{proof}

\begin{remark}
Let $\omega$ and $\eta$ be  two weight functions and suppose that  $\mathcal{S}^{[\omega]}_{[\eta]}$  is non-trivial. It seems an open problem whether  $\mathcal{S}^{[\omega]}_{[\eta]}$ is Gabor accessible. In particular, it seems not to be known whether $\mathcal{S}^\mu_\tau$, $\mu + \tau \geq 1$,  is always Gabor accessible.
\end{remark}
Let $\omega$ and $\eta$ be two weight functions. Finally, we discuss the mapping properties of the analysis and synthesis operators on $\mathcal{S}^{[\omega]}_{[\eta]}$. To this end, we define
$$
\lambda^{(\omega)}_{(\eta)} = \Lambda_\infty(\Z^2;  (\eta(|k|)+ \omega(|n|))_{(k,n) \in \Z^2}) \qquad \mbox{and} \qquad \lambda^{\{\omega\}}_{\{\eta\}} = \Lambda'_0(\Z^2;  (\eta(|k|)+ \omega(|n|))_{(k,n) \in \Z^2}). 
$$
Note that 
$$
 \lambda^{\{\omega\}}_{\{\eta\}} =   \varinjlim_{r \to 0^+} \Lambda^{(\eta(|k|)+ \omega(|n|))_{(k,n) \in \Z^2}}_r(\Z^2).
 $$
 \begin{proposition}\label{mpGabor}
Let $\omega$ and $\eta$ be two weight functions, let $\psi \in \mathcal{S}^{[\omega]}_{[\eta]}$,  and  let $ a,b > 0$.  The mappings
$$
C_\psi = C^{a,b}_\psi: \mathcal{S}^{[\omega]}_{[\eta]} \rightarrow \lambda^{[\omega]}_{[\eta]} \qquad \mbox{and} \qquad 
D_\psi = D^{a,b}_\psi: \lambda^{[\omega]}_{[\eta]} \to \mathcal{S}^{[\omega]}_{[\eta]} 
$$
are continuous.
 \end{proposition}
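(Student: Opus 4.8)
The plan is to prove continuity of $C_\psi$ and $D_\psi$ by reducing everything to pointwise decay estimates on $\psi$ and exploiting the subadditivity condition \eqref{eq:AlphaEquiv} for the weight functions, together with the elementary fact that the Fourier transform intertwines translation and modulation. I would treat the Beurling case in detail and indicate that the Roumieu case follows by the standard dualization/colimit argument; the key technical input is condition $(\alpha)$ in the equivalent form \eqref{eq:AlphaEquiv}.

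First I would record the basic estimates for a single time-frequency shift: since $\widehat{T_{ak}M_{bn}\psi} = M_{-ak}T_{bn}\widehat{\psi}$, the weight applied to the Fourier side of $T_{ak}M_{bn}\psi$ at a point $x$ is $e^{h\omega(x)}|\widehat{\psi}(x-bn)|$, and using \eqref{eq:AlphaEquiv} one gets $\omega(x) \le K\omega(x-bn) + K\omega(bn) + \log C \le K\omega(x-bn) + K'\omega(n) + \log C'$ for suitable constants (absorbing the $b$ via monotonicity and $(\alpha)$). An analogous estimate holds on the time side with $\eta$ and $k$. Consequently, for $\psi\in\mathcal{S}^{(\omega)}_{(\eta)}$ and any $h>0$ there is $h'>0$ (roughly $h' = Kh$) and a constant $C_h$ with
$$
\|T_{ak}M_{bn}\psi\|_{\mathcal{S}^{\omega,h}_{\eta,h}} \le C_h\, e^{-h(\eta(|k|)+\omega(|n|))}\,\|\psi\|_{\mathcal{S}^{\omega,h'}_{\eta,h'}}\cdot e^{2h(\eta(|k|)+\omega(|n|))},
$$
i.e. the norm of the shifted window grows at most like $e^{Ch(\eta(|k|)+\omega(|n|))}$. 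For the synthesis operator this, combined with summability of $\sum_{k,n} e^{-\varepsilon(\eta(|k|)+\omega(|n|))}$ for every $\varepsilon>0$ (which follows from $\eta,\omega$ being weight functions of at least logarithmic growth, so that $\nu_{\alpha(\omega)}$ grows polynomially — actually one needs $(\gamma)$ here), yields: given $c=(c_{k,n})\in\lambda^{(\omega)}_{(\eta)}$, for each $h$ pick $r$ large enough that $|c_{k,n}|\le \|c\|_r e^{-r(\eta(|k|)+\omega(|n|))}$ with $r$ dominating the loss, and estimate $\|D_\psi c\|_{\mathcal{S}^{\omega,h}_{\eta,h}} \le \sum_{k,n}|c_{k,n}|\,\|T_{ak}M_{bn}\psi\|_{\mathcal{S}^{\omega,h}_{\eta,h}}$ by a convergent series, giving a continuous seminorm bound. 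For the analysis operator, I would dualize: $|(f,T_{ak}M_{bn}\psi)_{L^2}|$ can be split as an integral in $x$ against $\psi$ plus (by Parseval) an integral in $\xi$ against $\widehat{\psi}$, and localizing near the peaks $x\approx ak$, $\xi\approx bn$ one bounds this by $C_h \|f\|_{\mathcal{S}^{\omega,h}_{\eta,h}} e^{-h'(\eta(|k|)+\omega(|n|))}$ for some $h'<h$, again via \eqref{eq:AlphaEquiv} to transfer the weight from the shift location to the variable.

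The main obstacle I anticipate is the bookkeeping of constants: one must ensure that the unavoidable loss in the exponent (passing from $\omega(x)$ to $\omega(x-bn)+\omega(n)$ costs a multiplicative factor $K$ from $(\alpha)$, and one wants the residual weight $e^{-h''(\eta(|k|)+\omega(|n|))}$ with $h''>0$ still positive after subtracting what is needed for summability) is handled uniformly, and that in the Beurling (projective limit) case this works for \emph{every} target seminorm index $h$, while in the Roumieu (inductive limit) case it works for \emph{some} index and the colimit topology on $\lambda^{\{\omega\}}_{\{\eta\}}$ is matched correctly. A clean way to organize this is to prove the single-shift estimate once with explicit dependence $h\mapsto h'=Kh$ and constant $C_h$, then feed it into the two mapping statements mechanically; the summability of $\sum e^{-\varepsilon(\eta(|k|)+\omega(|n|))}$ for all $\varepsilon>0$ should be isolated as a short lemma using $(\gamma)$ (equivalently, $\log|n| = O(\omega(|n|))$ gives $e^{-\varepsilon\omega(|n|)} = O(|n|^{-\varepsilon c})$, which is summable once $\varepsilon$ is large enough — and one simply takes $r$ correspondingly large in the projective case, or notes that in the inductive case one can always increase $r$). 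The remaining verifications — continuity of the Fourier transform on $\mathcal{S}^{[\omega]}_{[\eta]}$, Parseval, and the $L^1$-integrability needed to make the pairings absolutely convergent — are already available from Section \ref{sec:Spaces}.
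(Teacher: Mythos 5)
Your treatment of $D_\psi$ coincides with the paper's: bound $|D_\psi c(x)|$ by $\sum_{k,n}|c_{k,n}||\psi(x-ak)|$, use \eqref{eq:AlphaEquiv} to trade $e^{h\eta(|x|)}$ for $e^{Kh\eta(|x-ak|)}e^{Kh\eta(a|k|)}$ times a constant, reserve an extra factor $e^{-\rho(\eta(a|k|)+\omega(b|n|))}$ for summability of the series, and repeat on the Fourier side using $\mathcal{F}(D_\psi c)=\sum_{k,n} c_{k,n}M_{-ak}T_{bn}\widehat{\psi}$. For $C_\psi$, however, the paper does not argue directly at all: it cites \cite[Theorem 2.1 and Proposition 2.2(a)]{D-N-V-CharNuclBBSp}. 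Your direct sketch, as written, has a gap. The inner product $(f,T_{ak}M_{bn}\psi)_{L^2}$ is not "an integral in $x$ against $\psi$ plus an integral in $\xi$ against $\widehat{\psi}$"; it is a single quantity admitting \emph{two alternative} integral representations (one against $\psi$, one, via Parseval, against $\widehat{\psi}$). The time-side representation yields $|(f,T_{ak}M_{bn}\psi)_{L^2}|\le A\,e^{-2h\eta(a|k|)}$ uniformly in $n$, and the frequency-side one yields $\le B\,e^{-2h\omega(b|n|)}$ uniformly in $k$; neither alone gives the required joint decay $e^{-h(\eta(|k|)+\omega(|n|))}$. You must combine them, e.g.\ via $\min\{Ae^{-2h\eta(a|k|)},\,Be^{-2h\omega(b|n|)}\}\le (AB)^{1/2}e^{-h\eta(a|k|)-h\omega(b|n|)}$, after proving each one-sided estimate with a doubled exponent. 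This combination step is standard, but it is the crux of the analysis estimate and cannot be left implicit.

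A second point: your parenthetical "in the inductive case one can always increase $r$" is backwards. In $\lambda^{\{\omega\}}_{\{\eta\}}=\varinjlim_{r\to 0^+}\Lambda_r$, continuity must be verified on every step, including those with arbitrarily small $r$; the splitting $r=Kh+\rho$ then forces $\rho$ to be arbitrarily small, which is exactly why the Roumieu notion of weight function requires $\{\gamma\}$ (i.e.\ $\log t=o(\omega(t))$, giving $\sum_n e^{-\rho\omega(b|n|)}<\infty$ for \emph{all} $\rho>0$) rather than $(\gamma)$. With that correction, and after fixing $h_0$ with $\psi\in\mathcal{S}^{\omega,Kh_0}_{\eta,Kh_0}$ so that the target step $\mathcal{S}^{\omega,h}_{\eta,h}$, $0<h<h_0$, is available, the Roumieu case runs exactly as the Beurling one -- it is a colimit argument, not a dualization.
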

 \begin{proof}
The continuity of $C^{a,b}_\psi: \mathcal{S}^{[\omega]}_{[\eta]} \rightarrow \lambda^{[\omega]}_{[\eta]}$ follows directly from \cite[Theorem 2.1 and Proposition 2.2(a)]{D-N-V-CharNuclBBSp}.
We now consider $D^{a,b}_\psi$.
By condition $(\alpha)$, it holds that 
 $$
  \lambda^{(\omega)}_{(\eta)} =   \varprojlim_{r \to \infty} \Lambda^{(\eta(a|k|)+ \omega(b|n|))_{(k,n) \in \Z^2}}_r (\Z^2)  \qquad \mbox{and} \qquad 
  \lambda^{\{\omega\}}_{\{\eta\}} =   \varinjlim_{r \to 0^+} \Lambda^{(\eta(a|k|)+ \omega(b|n|))_{(k,n) \in \Z^2}}_r (\Z^2).
$$
Conditions $(\alpha)$ and $(\gamma)$ ($\{\gamma\}$) imply that for some $\rho >0$ (for all $\rho > 0$)
	\[ \sum_{n \in \Z} e^{- \rho \omega(b |n|)} =: C_{\rho,\omega} < \infty \quad \text{and} \quad \sum_{k \in \Z} e^{- \rho \eta(a |k|)} =: C_{\rho,\eta} < \infty . \]
Let $K, C \geq 1$ be such that \eqref{eq:AlphaEquiv} holds. \\
\emph{Beurling case:} It suffices to show that, for all $h >0$, the mapping
\begin{equation}
\label{Dsteps}
 D^{a, b}_{\psi} : \Lambda_{Kh + \rho}^{(\eta(a|k|)+ \omega(b|n|))_{(k,n) \in \Z^2}}(\Z^2) \to \mathcal{S}^{\omega,h}_{\eta, h} 
 \end{equation}
is continuous.  Let  $c \in \Lambda_{Kh + \rho}^{(\eta(a|k|)+ \omega(b|n|))_{(k,n) \in \Z^2}}(\Z^2)$ be arbitrary, and write $\|c\|$ for its norm. 
For all $x \in \R$ it holds that
	\begin{align*}
		|D^{a, b}_{\psi}(c)(x)| 
		&\leq \sum_{(k, n) \in \Z^2} |c_{k, n}| |\psi(x - ak)| \\
		&\leq \|c\| \sum_{(k, n) \in \Z^2} e^{-\rho (\eta(a|k|) + \omega(b|n|))} |\psi(x - ak)| e^{-Kh \eta(a |k|)} \\
		&\leq C C_{\rho,\eta} C_{\rho,\omega} \|\psi\|_{\eta, K h} \|c\| e^{-h\eta(|x|)} . 
	\end{align*}
Since
	\[ \mathcal{F}(D^{a, b}_\psi(c)) = \sum_{(k, n) \in \Z^2} c_{k, n}  M_{-ak}T_{bn} \widehat{\psi} , \]
a similar calculation yields that, for all $\xi \in \R$,
	\[ |\mathcal{F}(D^{a, b}_{\psi}(c))(\xi)| \leq C C_{\rho,\eta} C_{\rho,\omega} \|\widehat{\psi}\|_{\omega, K h} \|c\|  e^{-h\omega(|\xi|)}. \]
This implies the continuity of the mapping \eqref{Dsteps}.	 \\
\emph{Roumieu case:} Choose $h_0 >0$  such that  $\psi \in \mathcal{S}^{\omega, K h_0}_{\eta, K h_0}$. It suffices to show that, for all $0 < h < h_0$ and  $\rho >0$, the mapping \eqref{Dsteps} is continuous. This can be done as in the Beurling case.
 \end{proof}
 
\subsection{Wilson bases}  Let $\psi \in L^2(\R)$. We define
  \begin{align*}
    \psi_{k,0} &= T_{k} \psi, \qquad k \in \Z, \\
    \psi_{k,n} &= \frac{1}{\sqrt{2}} T_{\frac{k}{2}}(M_n+ (-1)^{k+n} M_{-n})\psi, \qquad (k,n)\in\Z\times\N_{>0},
  \end{align*}
and set $\mathcal{W}(\psi) = \{   \psi_{k,n} \mid  (k,n)\in\Z\times\N \}$. We call  $\mathcal{W}(\psi)$ a \emph{Wilson basis} if it is an orthonormal basis in $L^2(\R)$. In such a case, the 
\emph{Wilson analysis operator}
$$
\tilde{C}_\psi:  L^2(\R) \rightarrow \ell^2(\Z \times \N), ~  f \mapsto ((f,\psi_{k,n})_{L^2})_{(k,n) \in \Z \times \N},
$$
and the \emph{Wilson synthesis operator}
$$
\tilde{D}_\psi: \ell^2(\Z \times \N) \rightarrow L^2(\R), ~  (c_{k,n})_{ (k,n) \in \Z \times \N} \mapsto \sum_{ (k,n) \in \Z \times \N} c_{k,n}\psi_{k,n}
$$
are continuous, and
\begin{equation}
\tilde{D}_\gamma \circ \tilde{C}_\psi = \operatorname{id}_{L^2(\R)} \qquad \mbox{and} \qquad \tilde{C}_\gamma \circ \tilde{D}_\psi = \operatorname{id}_{ \ell^2(\Z \times \N)}. 
		\label{comp1}
\end{equation}

Let $\omega$ and $\eta$ be two weight functions. We now discuss when $\mathcal{S}^{[\omega]}_{[\eta]}$ contains an element $\psi$ such that  $\mathcal{W}(\psi)$ is a Wilson basis.   
There is the following celebrated result of Daubechies, Jaffard, and Journ\'e \cite{D-J-J-WilsonBasisExpDecay}:

\begin{proposition}[{\cite[Theorem 4.1]{D-J-J-WilsonBasisExpDecay}}]
 \label{WA-1} There exists $\psi \in \mathcal{S}^{1}_{1}$ such that  $\mathcal{W}(\psi)$ is a Wilson basis.
\end{proposition}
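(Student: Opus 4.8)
This is a celebrated theorem of Daubechies, Jaffard, and Journ\'e, so in the paper it suffices to cite \cite{D-J-J-WilsonBasisExpDecay}; let me nonetheless outline a proof. The plan is to separate the statement into an analytic part — producing one good window — and an algebraic part — the bookkeeping that turns a Gabor window into a Wilson basis. Precisely, I would establish: (1) there exists a real and even $\psi \in \mathcal{S}^1_1$ for which $\mathcal{G}(\psi, \tfrac12, 1)$ is a Parseval frame for $L^2(\R)$; and (2) whenever $\psi$ is real, even, and generates such a Parseval frame, the system $\mathcal{W}(\psi)$ defined above is an orthonormal basis of $L^2(\R)$.

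For step (1), I would start from the Gaussian $g(x) = e^{-\pi x^2}$, which is real, even, and lies in $\mathcal{S}^{1/2}_{1/2} \subseteq \mathcal{S}^1_1$. Since $\tfrac12 \cdot 1 < 1$, the system $\mathcal{G}(g, \tfrac12, 1)$ is a Gabor frame for $L^2(\R)$; let $S$ be its frame operator, a positive invertible operator on $L^2(\R)$ which commutes with $T_{k/2}$ and $M_n$ for all $(k,n) \in \Z^2$, and set $\psi = S^{-1/2} g$. Because $S$ commutes with these time-frequency shifts, $\mathcal{G}(\psi, \tfrac12, 1)$ coincides with the canonical Parseval frame $S^{-1/2}\mathcal{G}(g, \tfrac12, 1)$ and is therefore a Parseval frame for $L^2(\R)$; and $\psi$ is real and even, since $S$ also commutes with complex conjugation and with the reflection $f \mapsto f(-\,\cdot\,)$ — both inherited from the symmetries of $g$ together with the sign-invariance of the lattice $\tfrac12\Z \times \Z$. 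The only substantial point is that $\psi \in \mathcal{S}^1_1$. This is an inverse-closedness statement: the Gabor frame operator of a window with exponential decay in time and frequency is invertible, and hence so is its positive square root (via the holomorphic functional calculus), within the same class of operators. For the Gaussian this can be read off from Janssen's analysis of the Gaussian Gabor frame \cite{Janssen}, where the Zak transform reduces $S$ to multiplication by a smooth, strictly positive, bounded function; it is the exact counterpart of the fact used to prove Proposition~\ref{GA-1}, and more generally it is an instance of the theory of inverse-closed operator algebras whose elements have off-diagonal exponential decay.

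For step (2) — the combinatorial core of \cite[Theorem~4.1]{D-J-J-WilsonBasisExpDecay} — I would proceed by direct computation. Orthonormality of $\mathcal{W}(\psi)$: evaluate $(\psi_{k,n}, \psi_{k',n'})_{L^2}$ using that $\psi$ is real and even and that $\mathcal{G}(\psi, \tfrac12, 1)$ is Parseval; the combinations $M_n + (-1)^{k+n} M_{-n}$ occurring in the definition of $\psi_{k,n}$ are tuned precisely so that the overlaps between adjacent lattice cells cancel, leaving $\delta_{k,k'} \delta_{n,n'}$. Completeness: show $\sum_{(k,n) \in \Z \times \N} |(f, \psi_{k,n})_{L^2}|^2 = \|f\|_{L^2}^2$ for all $f \in L^2(\R)$, starting from the Parseval identity $\sum_{(k,m) \in \Z^2} |(f, T_{k/2} M_m \psi)_{L^2}|^2 = \|f\|_{L^2}^2$, grouping the terms $m = \pm n$ for $n > 0$ (and $m = 0$), and using once more the realness and evenness of $\psi$ to see that the cross terms generated by the alternating signs vanish after summation over $k$. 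Together these show that $\mathcal{W}(\psi)$ is an orthonormal basis of $L^2(\R)$, and taking the $\psi$ produced in step (1) finishes the proof.

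I expect the one real obstacle to be the assertion $\psi \in \mathcal{S}^1_1$ in step (1): the reduction to a single window and the inner-product bookkeeping in step (2) are elementary, whereas the statement that the canonical Parseval window of a well-localized Gabor frame remains in the same Gelfand–Shilov class is a genuine inverse-closedness result, and that is where all the analysis is concentrated.
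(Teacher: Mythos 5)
The paper proves this proposition purely by citation to \cite[Theorem 4.1]{D-J-J-WilsonBasisExpDecay}, and your outline is a faithful reconstruction of that argument in its modern form (cf.\ \cite[Theorem 8.5.1 and Corollary 8.5.5(a)]{G-FoundationsTimeFreqAnal}): pass to the canonical tight-frame window $S^{-1/2}g$ of the Gaussian on $\tfrac{1}{2}\Z\times\Z$, then apply the algebraic tight-frame-to-Wilson-basis step. The only cosmetic caveats are that with $\|\psi\|_{L^2}=1$ the tight frame $\mathcal{G}(\psi,\tfrac{1}{2},1)$ has frame bound $2$ rather than $1$ (so ``Parseval'' requires the appropriate normalization), and that at redundancy $2$ the Zak-transform diagonalization of $S$ is $2\times 2$ matrix-valued rather than scalar multiplication; neither affects the substance, and you correctly locate the genuine analytic content in the inverse-closedness assertion $S^{-1/2}g\in\mathcal{S}^{1}_{1}$.
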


\begin{proposition}\label{WA-2} Let $\omega$ be a non-quasianalytic weight function. There exists $\psi \in \mathcal{D}^{[\omega]}$ such that $\mathcal{W}(\psi)$ is a Wilson basis.
\end{proposition}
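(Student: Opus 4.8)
The plan is to build the desired Wilson window by convolving a fixed smooth window with a suitable function supported in a small interval, exploiting the fact that a Wilson basis property is preserved under a change of window implemented by convolution with a function whose Fourier transform has no zeros. First I would invoke Proposition \ref{WA-1} to fix $\psi_0 \in \mathcal{S}^1_1$ with $\mathcal{W}(\psi_0)$ a Wilson basis, and recall from \cite{D-J-J-WilsonBasisExpDecay} that the Wilson basis property for $\psi_0$ is equivalent to a Gabor-type frame condition for $\mathcal{G}(\psi_0,\tfrac{1}{2},1)$ together with a symmetry constraint; in particular $\psi_0$ can be taken real-valued and even, and $\widehat{\psi_0}$ may be assumed to satisfy $\sum_{n\in\Z}|\widehat{\psi_0}(\xi-n)|^2 \equiv 1$ (partition-of-unity normalization), which is built into the Daubechies--Jaffard--Journé construction.

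The key step is the following substitution principle: if $\chi \in \mathcal{D}^{[\omega]}$ is real, even, and $\widehat\chi(0)=1$ with $\widehat\chi$ real and, say, $\widehat{\chi} \geq 0$, then $\psi := \chi \ast \psi_0$ still generates a Wilson basis, \emph{provided} $\chi$ is supported in a sufficiently small neighbourhood of the origin. Indeed $\widehat{\psi}=\widehat{\chi}\,\widehat{\psi_0}$, and for $\operatorname{supp}\chi$ small enough, $\widehat\chi$ is bounded above and below by positive constants on the (compact, after periodization) region relevant to the frame inequality and to the biorthogonality relations of Theorem \ref{l:WZBiOrthRel}; the evenness of $\chi$ preserves the parity relation $\psi(-x)=\psi(x)$ which is exactly what is needed for the Gabor frame $\mathcal{G}(\psi,\tfrac12,1)$ to assemble into a Wilson \emph{orthonormal} basis (one checks the dual window is again given by a renormalization compatible with the Wilson structure, or alternatively one verifies directly that $\{\psi_{k,n}\}$ remains a Riesz basis whose Gram matrix is the identity after the standard Wilson symmetrization argument). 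I would therefore reduce the proposition to: such a $\chi \in \mathcal{D}^{[\omega]}$ exists with arbitrarily small support — and this is precisely the nontriviality of $\mathcal{D}^{[\omega]}$ for non-quasianalytic $\omega$, i.e.\ \cite[Theorem 1.3.7]{B-LinPDOGenDist}, after translating and scaling to shrink the support while keeping $\widehat\chi(0)=1$ and normalizing to make $\widehat\chi$ nonnegative near $0$ (replace $\chi$ by $\chi\ast\widetilde\chi$ with $\widetilde\chi(x)=\overline{\chi(-x)}$, so $\widehat{\chi\ast\widetilde\chi}=|\widehat\chi|^2\geq 0$, which also restores evenness). Finally $\psi=\chi\ast\psi_0$ has compact support (as a convolution of a compactly supported function with a Schwartz function — wait, $\psi_0$ is not compactly supported, so instead I note $\psi$ need not be compactly supported, but $\psi \in \mathcal{S}^{[\omega]}_{[\eta]}$ for the relevant weights: $\widehat\psi=\widehat\chi\widehat{\psi_0}$ inherits the decay of $\widehat{\psi_0}$, which is super-exponential, hence lies in every $\mathcal{D}^{[\omega]}$-type estimate, and $\psi$ itself decays like $\psi_0$; concretely $\psi \in \mathcal{D}^{[\omega]}$ because its Fourier transform $\widehat\chi\,\widehat{\psi_0}$ satisfies $\|\widehat\chi\,\widehat{\psi_0}\|_{\omega,h}<\infty$ for the appropriate $h$ since $\widehat{\psi_0}$ decays faster than $e^{-h\omega(t)}$ for $\omega(t)=o(t)$ type control — more carefully, $\omega$ is a weight function so $\omega(t)=o(t)$ need not hold, but $\widehat{\psi_0}(t) \lesssim e^{-\varepsilon |t|^{1/2}}$ from $\psi_0\in\mathcal S^1_1$ only gives $|t|^{1/2}$; so one should instead keep $\psi$ compactly supported by a different device).

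The cleanest route, and the one I would actually carry out, avoids $\psi_0$ altogether and mimics the \emph{proof} of Theorem 4.1 in \cite{D-J-J-WilsonBasisExpDecay}: that construction starts from any real even $g$ with $\widehat g$ supported in $[-1,1]$ and $\sum_n |\widehat g(\xi-n)|^2=1$, and produces the Wilson window by an explicit recursive/averaging procedure that preserves support in frequency and the regularity of $g$. So the plan is: take $\theta \in \mathcal{D}^{[\omega]}$, $\theta\geq 0$, real even, $\operatorname{supp}\widehat\theta \subseteq [-\tfrac14,\tfrac14]$ (exists by non-quasianalyticity, rescaling the bump from \cite[Theorem 1.3.7]{B-LinPDOGenDist}); form $\widehat g(\xi)=\widehat\theta(\xi)/(\sum_n |\widehat\theta(\xi-n)|^2)^{1/2}$, whose denominator is smooth, $1$-periodic, bounded below, and of class $\mathcal{D}^{[\omega]}$-type (closed under the relevant operations because $(\alpha)$ gives the needed stability of $\omega$ and a Faà di Bruno / ultradifferentiable-algebra argument handles the reciprocal square root — this is the one genuinely technical point), so that $g$ is compactly supported on the Fourier side and lies in a space forcing $g \in \mathcal{S}^{[\omega']}$-decay on the physical side; then run the Daubechies--Jaffard--Journé symmetrization to get $\psi$ from $g$, and check that every operation in their algorithm stays within $\mathcal{D}^{[\omega]}$ and preserves compact frequency support, so $\psi \in \mathcal{D}^{[\omega]}$ (compact support inherited from $\widehat\psi$ having compact support is false — $\widehat\psi$ compact support makes $\psi$ band-limited, not compactly supported; so in fact $\psi$ is band-limited and smooth, hence $\psi\in\mathcal{S}^{[\omega]}_{[\eta]}$-type with $\widehat\psi\in\mathcal D$; to land in $\mathcal{D}^{[\omega]}$ as stated one instead swaps the roles of time and frequency — take $\theta$ band-limited replaced by $\theta$ compactly supported in \emph{time} and apply the construction to $\widehat\theta$, using Fourier invariance of the whole scheme). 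The main obstacle, and where I would spend most of the effort, is verifying that the reciprocal square root of the periodization $\sum_n|\widehat\theta(\xi-n)|^2$ stays in the relevant $[\omega]$-ultradifferentiable class — this requires knowing that such classes are stable under composition with real-analytic functions bounded away from the relevant singularities, which follows from condition $(\alpha)$ via the standard combinatorial estimate for the derivatives of $1/\sqrt{\cdot}$, but must be stated and proved carefully for the non-quasianalytic (hence possibly very large) weight $\omega$.
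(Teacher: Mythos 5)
Your final route is, in substance, the paper's: the paper's entire proof is the remark that one repeats the argument of \cite[Corollary 8.5.5(b)]{G-FoundationsTimeFreqAnal}, namely take a real, even, nonnegative $\theta \in \mathcal{D}^{[\omega]}$ supported in $[-\tfrac12,\tfrac12]$ and positive on $[-\tfrac14,\tfrac14]$ (such $\theta$ exists exactly because $\omega$ is non-quasianalytic, \cite[Theorem 1.3.7]{B-LinPDOGenDist}), put $P(x)=\sum_{k\in\Z}\theta(x-k/2)^2$ and $\psi=\sqrt2\,\theta P^{-1/2}$; by the painless-expansion formula the frame operator of $\mathcal{G}(\theta,\tfrac12,1)$ is multiplication by $P$, so $\mathcal{G}(\psi,\tfrac12,1)$ is a tight frame with bound $2$, $\|\psi\|_{L^2}=1$, $\psi$ is real, even and supported in $[-\tfrac12,\tfrac12]$, and the Wilson basis theorem \cite[Theorem 8.5.1]{G-FoundationsTimeFreqAnal} gives that $\mathcal{W}(\psi)$ is orthonormal. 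Three remarks on your write-up. First, the entire opening paragraph should be discarded, as you half-recognize yourself: replacing a Wilson window $\psi_0$ by $\chi\ast\psi_0$ does not preserve the tight-frame/orthonormality property (two-sided bounds on $\widehat\chi$ only yield a Riesz sequence), and $\chi\ast\psi_0$ is never compactly supported, so it cannot lie in $\mathcal{D}^{[\omega]}$. Second, no recursive Daubechies--Jaffard--Journ\'e procedure is needed: their iteration computes $S^{-1/2}$ for a window that is \emph{not} compactly supported; in the time-side compactly supported setting $S$ is a multiplication operator and $S^{-1/2}$ is the explicit formula above --- and note that once you swap to the time side the periodization must run over $\tfrac12\Z$, not $\Z$. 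Third, the one point that genuinely requires an argument --- which you correctly isolate as the main obstacle but do not prove --- is that $\theta P^{-1/2}\in\mathcal{D}^{[\omega]}$, i.e.\ stability of the Bj\"orck class under multiplication by the reciprocal square root of the strictly positive, $\tfrac12$-periodic, $[\omega]$-ultradifferentiable function $P$. The paper leaves this inside the citation as well, so on that point your proposal is no less complete than the published proof, but a self-contained argument would have to supply this inverse-closedness property of $\mathcal{E}^{[\omega]}$ under condition $(\alpha)$.
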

\begin{proof}
This can be shown by using the same argument as in the proof of  \cite[Corollary 8.5.5(b)]{G-FoundationsTimeFreqAnal}.
\end{proof}

\begin{remark}\label{remarkWilson}
$(i)$ Let $\psi \in L^2(\R)$ be such that $\mathcal{W}(\psi)$ is a Wilson basis. Then, $\mathcal{G}(\psi,1/2,1)$ is a (tight) Gabor frame \cite[Corollary 8.5.4]{G-FoundationsTimeFreqAnal}\footnote{This part of  \cite[Corollary 8.5.4]{G-FoundationsTimeFreqAnal} holds without the assumption $\psi(t) = \overline{\psi}(-t)$.}. Hence, given two weight functions $\omega$ and $\eta$, the space $\mathcal{S}^{[\omega]}_{[\eta]}$ is Gabor accessible if it contains an element $\psi$ such that  $\mathcal{W}(\psi)$ is a Wilson basis. \\
$(ii)$ Let $\omega$ and $\eta$ be two weight functions and suppose that  $\mathcal{S}^{[\omega]}_{[\eta]}$  is non-trivial. It seems an open problem whether  $\mathcal{S}^{[\omega]}_{[\eta]}$ contains an element $\psi$ such that  $\mathcal{W}(\psi)$ is a Wilson basis. In particular, this question seems open for the spaces $\mathcal{S}^\mu_\tau$, $\mu + \tau \geq 1$. See \cite[Section 3]{B-J-GaborUnimodWindDecay} for some results related to this problem.
\end{remark}
Let $\omega$ and $\eta$ be two weight functions.  We now study the mapping properties of the Wilson analysis and synthesis operators on  $\mathcal{S}^{[\omega]}_{[\eta]}$. Set
$$
\tilde{\lambda}^{(\omega)}_{(\eta)} = \Lambda_\infty( \Z \times \N;  (\eta(|k|)+ \omega(n))_{(k,n) \in  \Z \times \N})$$
and
$$
\tilde{\lambda}^{\{\omega\}}_{\{\eta\}} = \Lambda'_0( \Z \times \N;  (\eta(|k|)+ \omega(n))_{(k,n) \in  \Z \times \N}). 
$$
Note that 
$$
 \tilde{\lambda}^{\{\omega\}}_{\{\eta\}} =   \varinjlim_{r \to 0^+} \Lambda_r^{(\eta(|k|)+ \omega(n))_{(k,n) \in  \Z \times \N}}( \Z \times \N).
 $$
 \begin{proposition}\label{mpWilson}
Let $\omega$ and $\eta$ be two weight functions and let $\psi \in \mathcal{S}^{[\omega]}_{[\eta]}$.  The mappings
$$
\tilde{C}_\psi : \mathcal{S}^{[\omega]}_{[\eta]} \rightarrow \tilde{\lambda}^{[\omega]}_{[\eta]} \qquad \mbox{and} \qquad 
\tilde{D}_\psi : \tilde{\lambda}^{[\omega]}_{[\eta]} \to \mathcal{S}^{[\omega]}_{[\eta]} 
$$
are continuous.
 \end{proposition}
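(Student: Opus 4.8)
The plan is to reduce the statement to the mapping properties of the Gabor analysis and synthesis operators established in Proposition \ref{mpGabor}, using that the Wilson functions $\psi_{k,n}$ are explicit finite linear combinations of the time-frequency shifts generating the Gabor system $\mathcal{G}(\psi,1/2,1)$. Write $c^G = C^{1/2,1}_\psi f = ((f, T_{j/2}M_\ell\psi)_{L^2})_{(j,\ell)\in\Z^2}$ for the Gabor coefficients associated with the lattice $\tfrac12\Z\times\Z$. Since $\tfrac{1}{\sqrt 2}$ and $(-1)^{k+n}$ are real scalars, the definitions yield $(f,\psi_{k,0})_{L^2} = c^G_{2k,0}$ and $(f,\psi_{k,n})_{L^2} = \tfrac{1}{\sqrt 2}(c^G_{k,n}+(-1)^{k+n}c^G_{k,-n})$ for $n>0$; hence $\tilde{C}_\psi = S\circ C^{1/2,1}_\psi$, where $S\colon\C^{\Z^2}\to\C^{\Z\times\N}$ is the ``compression'' map sending $(c_{j,\ell})$ to the sequence with entries $c_{2k,0}$ (for $n=0$) and $\tfrac{1}{\sqrt 2}(c_{k,n}+(-1)^{k+n}c_{k,-n})$ (for $n>0$). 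Dually, regrouping the series defining $\tilde{D}_\psi$ according to the Gabor index gives $\tilde{D}_\psi = D^{1/2,1}_\psi\circ R$, where $R\colon\C^{\Z\times\N}\to\C^{\Z^2}$ sends $(c_{k,n})$ to the sequence $d$ with $d_{2k,0}=c_{k,0}$, $d_{2k+1,0}=0$, $d_{j,\ell}=\tfrac{1}{\sqrt 2}c_{j,\ell}$ for $\ell>0$, and $d_{j,\ell}=\tfrac{(-1)^{j+\ell}}{\sqrt 2}c_{j,-\ell}$ for $\ell<0$.

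It therefore suffices to show that $S\colon\lambda^{[\omega]}_{[\eta]}\to\tilde{\lambda}^{[\omega]}_{[\eta]}$ and $R\colon\tilde{\lambda}^{[\omega]}_{[\eta]}\to\lambda^{[\omega]}_{[\eta]}$ are continuous; the proposition then follows by composing with the continuous operators $C^{1/2,1}_\psi$ and $D^{1/2,1}_\psi$ from Proposition \ref{mpGabor} (used with $a=1/2$, $b=1$). For $S$, one works with the description $\lambda^{[\omega]}_{[\eta]}=\Lambda_\infty(\Z^2;(\eta(|j|)+\omega(|\ell|)))$ (and its inductive-limit analogue in the Roumieu case): since $\eta$ is increasing we have $e^{r\eta(|k|)}\le e^{r\eta(2|k|)}$, which controls the $n=0$ entries $c_{2k,0}$, and since $\omega(n)=\omega(|n|)=\omega(|-n|)$ the entries $c_{k,\pm n}$ carry exactly the weight $\eta(|k|)+\omega(n)$ attached to the index $(k,n)$ in $\tilde{\lambda}^{[\omega]}_{[\eta]}$; hence the $r$-seminorm of $Sc$ is at most $\sqrt 2$ times that of $c$, with no change of parameter. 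For $R$, the only entries requiring attention are the $d_{2k,0}=c_{k,0}$, which sit at the index $(2k,0)$ and therefore carry the weight $\eta(2|k|)$: applying \eqref{eq:AlphaEquiv} with $t_1=t_2=|k|$ gives $\eta(2|k|)\le 2K\eta(|k|)+\log C$, so $e^{r\eta(2|k|)}\le C^r e^{2Kr\eta(|k|)}$, whence the $r$-seminorm of $Rc$ is bounded by $C^r$ times the $2Kr$-seminorm of $c$; the remaining entries of $d$ are dominated term by term by the corresponding entries of $c$. This yields continuity of $R$ in both the Fr\'echet and the $(LB)$ setting (in the Roumieu case one simply shrinks the parameter, using the inductive-limit descriptions of $\tilde{\lambda}^{\{\omega\}}_{\{\eta\}}$ and $\lambda^{\{\omega\}}_{\{\eta\}}$ recorded before the statement).

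Two routine points remain. First, for $c\in\tilde{\lambda}^{[\omega]}_{[\eta]}$ one must justify that the series defining $\tilde{D}_\psi c$ may be rearranged and regrouped into $D^{1/2,1}_\psi(Rc)$; this follows from absolute convergence, which is precisely what the estimates in the proof of Proposition \ref{mpGabor} supply, both in the space variable and, after Fourier transform, in the frequency variable. Second, the operators $\tilde{C}_\psi$ and $\tilde{D}_\psi$ are here treated as maps between the indicated locally convex spaces for an arbitrary $\psi\in\mathcal{S}^{[\omega]}_{[\eta]}$, with no assumption that $\mathcal{W}(\psi)$ be a Wilson basis; the reduction above never uses such an assumption. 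The only genuine difficulty is bookkeeping, namely keeping track of the factor-two dilation in the $\eta$-variable produced by the half-integer translations $T_{k/2}$, which is absorbed by condition $(\alpha)$ exactly as in the proof of Proposition \ref{mpGabor}.
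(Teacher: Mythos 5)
Your proposal is correct and follows essentially the same route as the paper: factor $\tilde{C}_\psi = \Phi_1 \circ C^{1/2,1}_\psi$ and $\tilde{D}_\psi = D^{1/2,1}_\psi \circ \Phi_2$ through the Gabor operators of Proposition \ref{mpGabor}, and check that the combinatorial maps (your $S$ and $R$ are exactly the paper's $\Phi_1$ and $\Phi_2$) are continuous between $\lambda^{[\omega]}_{[\eta]}$ and $\tilde{\lambda}^{[\omega]}_{[\eta]}$ using condition $(\alpha)$. Your write-up merely supplies the weight estimates ($\eta(2|k|) \leq 2K\eta(|k|) + \log C$, symmetry of $\omega(|\pm n|)$) that the paper leaves implicit.
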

 \begin{proof}
  For  $c = (c_{k,n})_{(k,n)\in \Z^2} \in \C^{\Z^2}$  we define  $\Phi_1(c) \in  \C^{\Z \times \N}$ via
\begin{equation}
\label{def-v}
    \Phi_1(c)_{k,n} =
    \begin{cases}
      c_{2k,0}, & n=0,\\
      \frac{1}{\sqrt{2}} (c_{k,n} + (-1)^{k+n}c_{k,-n}), & n>0.
    \end{cases}
\end{equation}
Then, $\widetilde{C}_{\psi}  = \Phi_1 \circ  C_{\psi}^{\frac{1}{2},1}$.
For $c = (c_{k,n})_{(k,n)\in \Z \times \N} \in \C^{\Z \times \N}$ we define  $\Phi_2(c) \in  \C^{\Z^2}$ via
\begin{equation}
\label{def-w}
    \Phi_2(c)_{k,n} =
    \begin{cases}
      0, & n=0\; \text{and}\; k\;\text{odd}, \\
      c_{\frac{k}{2},0}, & n=0\; \text{and}\; k\;\text{even}, \\
      \frac{1}{\sqrt{2}} c_{k,n} & n>0, \\
      \frac{(-1)^{k+n}}{\sqrt{2}} c_{k,-n} & n<0 .
    \end{cases}
\end{equation}
Then $\widetilde{D}_{\psi}  = D_{\psi, \frac{1}{2},1} \circ \Phi_2$. Since, by condition $(\alpha)$, the mappings
\[
  \Phi_1\colon \lambda^{[\omega]}_{[\eta]}  \rightarrow \tilde{\lambda}^{[\omega]}_{[\eta]}  \qquad \mbox{and} \qquad
  \Phi_2\colon   \tilde{\lambda}^{[\omega]}_{[\eta]} \rightarrow  \lambda^{[\omega]}_{[\eta]}
\]
are continuous, the result follows from Proposition \ref{mpGabor}.
\end{proof}

\section{Sequence space representations of Beurling-Bj\"orck spaces}
\label{sec:SeqSpRep}
In this section, we present our main results: non-constructive and constructive sequence space representations of the spaces $\mathcal{S}^{[\omega]}_{[\eta]}$.

\subsection{Non-constructive sequence space representations of $\mathcal{S}^{[\omega]}_{[\eta]}$}
We are ready to prove the first main result of this article.
\begin{theorem}  \label{seq-space-repr} Let $\omega$ and $\eta$ be  two weight functions and suppose that $\mathcal{S}_{[\eta]}^{[\omega]}$ is Gabor accessible. Then,
\begin{equation}
\label{SSR}
\mathcal{S}_{(\eta)}^{(\omega)} \cong \Lambda_\infty( \alpha(\omega), \alpha(\eta)) \cong \Lambda_\infty (\alpha(\omega,\eta))
\end{equation}
and
\begin{equation}
\label{SSR1}
\mathcal{S}_{\{\eta\}}^{\{\omega\}} \cong \Lambda'_0( \alpha(\omega), \alpha(\eta)) \cong \Lambda'_0 ( \alpha(\omega,\eta)).
\end{equation}
\end{theorem}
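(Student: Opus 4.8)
The plan is to deduce the sequence space representations from the mapping properties of the Gabor analysis and synthesis operators (Proposition~\ref{mpGabor}) together with the Pe\l czynski--Vogt decomposition result (Proposition~\ref{iso-sequence}). First I would record that, since $\mathcal{S}_{[\eta]}^{[\omega]}$ is Gabor accessible, there exist $\psi, \gamma \in \mathcal{S}_{[\eta]}^{[\omega]}$ and $a,b>0$ with $D_\gamma \circ C_\psi = \operatorname{id}_{L^2(\R)}$. By Proposition~\ref{mpGabor}, the operators $C_\psi \colon \mathcal{S}_{[\eta]}^{[\omega]} \to \lambda^{[\omega]}_{[\eta]}$ and $D_\gamma \colon \lambda^{[\omega]}_{[\eta]} \to \mathcal{S}_{[\eta]}^{[\omega]}$ are continuous, and since $D_\gamma \circ C_\psi$ restricts to the identity on the dense subspace $\mathcal{S}_{[\eta]}^{[\omega]}$ of $L^2(\R)$ (density follows from $\mathcal{S}^{1/2}_{1/2} \subseteq \mathcal{S}_{[\eta]}^{[\omega]}$), it is the identity on $\mathcal{S}_{[\eta]}^{[\omega]}$. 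Hence $P := C_\psi \circ D_\gamma \colon \lambda^{[\omega]}_{[\eta]} \to \lambda^{[\omega]}_{[\eta]}$ is a continuous projection with $P(\lambda^{[\omega]}_{[\eta]}) = C_\psi(\mathcal{S}_{[\eta]}^{[\omega]}) \cong \mathcal{S}_{[\eta]}^{[\omega]}$, exhibiting $\mathcal{S}_{[\eta]}^{[\omega]}$ as a complemented subspace of $\lambda^{[\omega]}_{[\eta]}$.

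Next I would identify $\lambda^{[\omega]}_{[\eta]}$ with a power series space. In the Beurling case, $\lambda^{(\omega)}_{(\eta)} = \Lambda_\infty(\Z^2; (\eta(|k|)+\omega(|n|))_{(k,n)})$, and replacing the index set $\Z^2$ by $\N^2$ only changes the sequence by a bounded multiplicity factor (each value is attained at most four times), so $\lambda^{(\omega)}_{(\eta)} \cong \Lambda_\infty(\N^2; (\eta(k)+\omega(n))_{(k,n)}) = \Lambda_\infty(\alpha(\eta),\alpha(\omega))$, and by \eqref{basic} this equals $\Lambda_\infty(\alpha(\eta)\sharp\alpha(\omega))$; by Lemma~\ref{explicit} the latter is $\Lambda_\infty(\alpha(\omega,\eta))$ up to $\asymp$. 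Since $\omega$ and $\eta$ are weight functions, $\alpha(\omega)$ and $\alpha(\eta)$ are stable exponent sequences with $\log n = O(\alpha(\cdot)_n)$ (resp.\ $o$), so by Lemma~\ref{properties-sharp} the space $\Lambda_\infty(\alpha(\omega,\eta))$ is a nuclear stable power series space of infinite type. The same bookkeeping in the Roumieu case gives $\lambda^{\{\omega\}}_{\{\eta\}} \cong \Lambda'_0(\alpha(\omega),\alpha(\eta)) \cong \Lambda'_0(\alpha(\omega,\eta))$, with $\Lambda_0(\alpha(\omega,\eta))$ a nuclear stable power series space of finite type.

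To upgrade the complemented embedding to an isomorphism via Proposition~\ref{iso-sequence}, I also need the reverse: $\lambda^{[\omega]}_{[\eta]}$ (equivalently $\Lambda_\theta(\alpha(\omega,\eta))$, or its dual) is isomorphic to a complemented subspace of $\mathcal{S}_{[\eta]}^{[\omega]}$. For this I would use the Wexler--Raz biorthogonality relations (Theorem~\ref{l:WZBiOrthRel}): equation \eqref{WR} says $\tfrac{1}{ab}\, C^{1/b,1/a}_\psi \circ D^{1/b,1/a}_\gamma = \operatorname{id}_{\ell^2(\Z^2)}$, and by Proposition~\ref{mpGabor} (applied with the lattice $\tfrac1b\Z \times \tfrac1a\Z$) both $D^{1/b,1/a}_\gamma \colon \lambda^{[\omega]}_{[\eta]} \to \mathcal{S}_{[\eta]}^{[\omega]}$ and $C^{1/b,1/a}_\psi \colon \mathcal{S}_{[\eta]}^{[\omega]} \to \lambda^{[\omega]}_{[\eta]}$ are continuous, so $\tfrac{1}{ab} C^{1/b,1/a}_\psi \circ D^{1/b,1/a}_\gamma$ is a continuous identity on $\lambda^{[\omega]}_{[\eta]}$; hence $\tfrac{1}{ab} D^{1/b,1/a}_\gamma \circ C^{1/b,1/a}_\psi$ is a continuous projection on $\mathcal{S}_{[\eta]}^{[\omega]}$ whose range is isomorphic to $\lambda^{[\omega]}_{[\eta]}$. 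Thus $\mathcal{S}_{[\eta]}^{[\omega]}$ and $\lambda^{[\omega]}_{[\eta]}$ are each complemented in the other. In the Beurling case, $\mathcal{S}_{(\eta)}^{(\omega)}$ is a Fr\'echet space and Proposition~\ref{iso-sequence} applied to the nuclear stable power series space $\Lambda_\infty(\alpha(\omega,\eta))$ yields $\mathcal{S}_{(\eta)}^{(\omega)} \cong \Lambda_\infty(\alpha(\omega,\eta)) \cong \Lambda_\infty(\alpha(\omega),\alpha(\eta))$, proving \eqref{SSR}. In the Roumieu case one argues on the Fr\'echet spaces by duality: the transpose of the pair of projections above realizes the Fr\'echet space $(\mathcal{S}_{\{\eta\}}^{\{\omega\}})'_b$ and $\Lambda_0(\alpha(\omega,\eta))$ as complemented subspaces of one another, so $(\mathcal{S}_{\{\eta\}}^{\{\omega\}})'_b \cong \Lambda_0(\alpha(\omega,\eta))$ by Proposition~\ref{iso-sequence}; taking strong duals and using that $\mathcal{S}_{\{\eta\}}^{\{\omega\}}$ is a reflexive (indeed, a complete) $(LB)$-space gives \eqref{SSR1}. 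The main obstacle I anticipate is the Roumieu/dual bookkeeping: one must make sure the projections and the $(LB)$--Fr\'echet duality interact correctly (reflexivity of $\mathcal{S}_{\{\eta\}}^{\{\omega\}}$, strong duals of complemented subspaces being complemented), so that Proposition~\ref{iso-sequence}, which is stated for Fr\'echet spaces, can legitimately be applied to the dual side and then dualized back.
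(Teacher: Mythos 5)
Your proposal is correct and follows essentially the same route as the paper: complementation of $\mathcal{S}^{[\omega]}_{[\eta]}$ in $\lambda^{[\omega]}_{[\eta]}$ via $D_\gamma\circ C_\psi=\operatorname{id}$, the reverse complementation via the Wexler--Raz relations, and the Pe\l czynski--Vogt result to conclude, together with the identification $\lambda^{[\omega]}_{[\eta]}\cong\Lambda_\theta(\alpha(\omega),\alpha(\eta))$ and Lemma \ref{explicit}. Two minor remarks: the density argument is superfluous (the identity $D_\gamma\circ C_\psi=\operatorname{id}_{L^2(\R)}$ restricts to any subspace, and the inclusion $\mathcal{S}^{1/2}_{1/2}\subseteq\mathcal{S}^{[\omega]}_{[\eta]}$ you invoke for density need not hold under the mere assumption of Gabor accessibility), while your explicit handling of the Roumieu/dual bookkeeping is in fact more careful than the paper's, which applies Proposition \ref{iso-sequence} to both cases without comment.
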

\begin{proof}
The second isomorphism in both \eqref{SSR} and \eqref{SSR1} follows from \eqref{basic} and Lemma \ref{explicit}. We now show the first ones. Note that, by condition $(\alpha)$,  $\Lambda_\infty( \alpha(\omega), \alpha(\eta)) \cong   \lambda^{(\omega)}_{(\eta)}$ and $\Lambda'_0( \alpha(\omega), \alpha(\eta)) \cong   \lambda^{\{\omega\}}_{\{\eta\}}$. Hence, by Proposition \ref{iso-sequence}, it suffices to show that $\mathcal{S}_{[\eta]}^{[\omega]}$ is isomorphic to a complemented subspace of  $\lambda^{[\omega]}_{[\eta]}$ and that $\lambda^{[\omega]}_{[\eta]}$ is isomorphic to a complemented subspace of  $\mathcal{S}_{[\eta]}^{[\omega]}$. As  $\mathcal{S}_{[\eta]}^{[\omega]}$ is Gabor accessible, there are $\psi, \gamma \in \mathcal{S}_{[\eta]}^{[\omega]}$ and $a,b >0$ such that $(\psi,\gamma)$ is a pair of dual windows on $a \Z \times b \Z$. Proposition \ref{mpGabor} implies that the mappings
$$
 C^{a,b}_\psi: \mathcal{S}^{[\omega]}_{[\eta]} \rightarrow \lambda^{[\omega]}_{[\eta]} \qquad \mbox{and} \qquad 
D^{a,b}_\gamma: \lambda^{[\omega]}_{[\eta]} \to \mathcal{S}^{[\omega]}_{[\eta]} 
$$
are continuous, and by \eqref{comp11}, it holds that $D^{a,b}_\gamma \circ C^{a,b}_\psi = \operatorname{id}_{\mathcal{S}^{[\omega]}_{[\eta]}}$. This shows that $\mathcal{S}_{[\eta]}^{[\omega]}$ is isomorphic to a complemented subspace of  $\lambda^{[\omega]}_{[\eta]}$. Another application of Proposition \ref{mpGabor} gives that the mappings
$$
 C^{\frac{1}{b}, \frac{1}{a}}_\psi: \mathcal{S}^{[\omega]}_{[\eta]} \rightarrow \lambda^{[\omega]}_{[\eta]} \qquad \mbox{and} \qquad 
D^{\frac{1}{b}, \frac{1}{a}}_\gamma: \lambda^{[\omega]}_{[\eta]} \to \mathcal{S}^{[\omega]}_{[\eta]} 
$$
are continuous, and by \eqref{WR}, it holds that $(ab)^{-1}C^{\frac{1}{b}, \frac{1}{a}}_\psi \circ D^{\frac{1}{b},\frac{1}{a}}_\gamma =  \operatorname{id}_{ \lambda^{[\omega]}_{[\eta]} }$. This shows that $\lambda^{[\omega]}_{[\eta]}$ is isomorphic to a complemented subspace of  $\mathcal{S}_{[\eta]}^{[\omega]}$.
\end{proof}

\begin{corollary} \label{main-text} Let $\omega$ and $\eta$ be two weight functions such that  $\omega(t) = o(t^2)$ and $\eta(t) = o(t^2)$ ($\omega(t) = O(t^2)$ and $\eta(t) = O(t^2)$).
Then, the sequence space representations \eqref{SSR} and \eqref{SSR1} hold.
\end{corollary}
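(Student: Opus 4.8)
The plan is to derive this immediately by concatenating the two preceding results, with no intermediate construction or estimate. First I would invoke Proposition \ref{GA-1}: under the stated growth hypotheses on $\omega$ and $\eta$ — namely $\omega(t) = o(t^2)$ and $\eta(t) = o(t^2)$ in the Beurling case, and $\omega(t) = O(t^2)$ and $\eta(t) = O(t^2)$ in the Roumieu case — that proposition asserts precisely that $\mathcal{S}^{[\omega]}_{[\eta]}$ is Gabor accessible. The only thing to verify at this point is bookkeeping: the parenthetical (Roumieu) hypothesis of the corollary must be paired with the parenthetical (Roumieu) conclusion of Proposition \ref{GA-1}, which it is.

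Having established Gabor accessibility, I would then apply Theorem \ref{seq-space-repr} verbatim. Its only hypothesis, beyond $\omega$ and $\eta$ being weight functions, is that $\mathcal{S}_{[\eta]}^{[\omega]}$ be Gabor accessible, and its conclusion is exactly the pair of sequence space representations \eqref{SSR} and \eqref{SSR1} (in the Beurling and Roumieu cases respectively). So the corollary follows in one line.

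Since the argument is a direct consequence of two already-proved statements, there is no genuine obstacle here. The only point requiring care is to keep the Beurling branch (the `$o$' conditions, projective limits, $\Lambda_\infty(\alpha(\omega),\alpha(\eta)) \cong \Lambda_\infty(\alpha(\omega,\eta))$) and the Roumieu branch (the `$O$' conditions, inductive limits, $\Lambda'_0(\alpha(\omega),\alpha(\eta)) \cong \Lambda'_0(\alpha(\omega,\eta))$) consistently aligned, so that the $o$/$O$ dichotomy in the hypotheses matches the $\Lambda_\infty$/$\Lambda'_0$ dichotomy in the conclusion throughout.
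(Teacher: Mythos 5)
Your proposal is correct and coincides with the paper's own proof, which likewise obtains Gabor accessibility from Proposition \ref{GA-1} and then applies Theorem \ref{seq-space-repr}. The bookkeeping about matching the Beurling/Roumieu branches is handled properly.
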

\begin{proof}
This follows from Proposition \ref{GA-1} and Theorem \ref{seq-space-repr}.
\end{proof}
\begin{example}\label{example-iso}  Consider  $\omega_{\mu,u}(t) = t^{\frac{1}{\mu}}\log(1+t)^u$ for $\mu >0$ and $u \in \R$. Note that $\omega_{\mu,u}(t) = o(t^2)$ if and only if $\mu >2$ and $ u \in \R$ or $\mu = 2$ and $u < 0$, while $\omega_{\mu,u}(t) = O(t^2)$ if and only if $\mu >2$ and $u \in \R$ or  $\mu = 2$ and $u \leq 0$. Let $\mu,\tau > 0$ and $u, v \in \R$. If  $\omega_{\mu,u}(t) = o(t^2)$ and  $\omega_{\tau,v}(t) = o(t^2)$, then
$$
\mathcal{S}_{(\omega_{\tau,v})}^{(\omega_{\mu,u})} \cong \Lambda_\infty (  \alpha(\omega_{\mu+\tau, \frac{u\mu + v\tau}{\mu + \tau}})).
$$
 If  $\omega_{\mu,u}(t) = O(t^2)$ and  $\omega_{\tau,v}(t) = O(t^2)$, then
$$
\mathcal{S}_{\{\omega_{\tau,v}\}}^{\{\omega_{\mu,u}\}} \cong \Lambda'_0 (  \alpha(\omega_{\mu+\tau, \frac{u\mu + v\tau}{\mu + \tau}})).
$$
This follows from Example \ref{exampleGStwist} and Corollary \ref{main-text}. In particular, by Example \ref{CGS}, we find, for $\mu, \tau > 1/2$,
$$
\Sigma^{\mu}_{\tau} = \mathcal{S}^{(\omega_{\mu,0})}_{(\omega_{\tau,0})} \cong  \Lambda_\infty (\alpha(\omega_{\mu+\tau,0}))
$$
and, for $\mu, \tau \geq 1/2$,
$$
\mathcal{S}^{\mu}_{\tau} = \mathcal{S}^{\{\omega_{\mu,0}\}}_{\{\omega_{\tau,0}\}} \cong  \Lambda'_0 (\alpha(\omega_{\mu+\tau,0})) . 
$$
This shows Theorem \ref{thm-intro} from the introduction. 
\end{example}

\subsection{Constructive sequence space representations of $\mathcal{S}^{[\omega]}_{[\eta]}$}

For $c = (c_{k,n})_{(k,n)\in \Z \times \N} \in \C^{\Z \times \N}$ we define  $\Phi(c) \in  \C^{\N^2}$ via
\begin{equation}
\label{def-w}
    \Phi(c)_{k,n} =
    \begin{cases}
    c_{-\frac{k}{2},n}, & k\;\text{even}, \\
      c_{\frac{k+1}{2},n}, & k\;\text{odd}.
    \end{cases}
\end{equation}
Let $\omega$ and $\eta$ be two weight functions. Due to condition $(\alpha)$, the mappings 
\begin{equation}
\label{double}
  \Phi\colon  \tilde{\lambda}^{(\omega)}_{(\eta)}  \to \Lambda_\infty( \alpha(\omega), \alpha(\eta)) \qquad \mbox{and} \qquad   \Phi\colon  \tilde{\lambda}^{\{\omega\}}_{\{\eta\}}  \to \Lambda'_0( \alpha(\omega), \alpha(\eta))
\end{equation}
are topological isomorphisms.
We now give the second main result of this article.
\begin{theorem}\label{ssrWilson}
Let $\omega$ and $\eta$ be  two weight functions and let $ \psi \in \mathcal{S}_{[\eta]}^{[\omega]}$ be such that $\mathcal{W}(\psi)$ is a Wilson basis. The mappings
\begin{equation}
\label{Wilsondoublemap}
  \Phi \circ \tilde{C}_\psi \colon  \mathcal{S}_{(\eta)}^{(\omega)}  \to \Lambda_\infty( \alpha(\omega), \alpha(\eta)) \qquad \mbox{and} \qquad    \Phi \circ \tilde{C}_\psi\colon  \mathcal{S}_{\{\eta\}}^{\{\omega\}}  \to \Lambda'_0( \alpha(\omega), \alpha(\eta))
\end{equation}
are topological isomorphisms, and their inverses are given by
\[
   \tilde{D}_\psi  \circ  \Phi^{-1} \colon \Lambda_\infty( \alpha(\omega), \alpha(\eta)) \to   \mathcal{S}_{(\eta)}^{(\omega)}  \qquad \mbox{and} \qquad     \tilde{D}_\psi  \circ  \Phi^{-1} \colon \Lambda'_0( \alpha(\omega), \alpha(\eta))
\to   \mathcal{S}_{\{\eta\}}^{\{\omega\}}.    \] 
\end{theorem}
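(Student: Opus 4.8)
The plan is to obtain the isomorphism in \eqref{Wilsondoublemap} as a composition of maps whose continuity has already been established, and then to check the two composition identities on the underlying Hilbert spaces. By Proposition \ref{mpWilson}, the maps $\tilde{C}_\psi \colon \mathcal{S}^{[\omega]}_{[\eta]} \to \tilde{\lambda}^{[\omega]}_{[\eta]}$ and $\tilde{D}_\psi \colon \tilde{\lambda}^{[\omega]}_{[\eta]} \to \mathcal{S}^{[\omega]}_{[\eta]}$ are continuous, and by \eqref{double} the map $\Phi$ is a topological isomorphism from $\tilde{\lambda}^{(\omega)}_{(\eta)}$ onto $\Lambda_\infty(\alpha(\omega),\alpha(\eta))$ and from $\tilde{\lambda}^{\{\omega\}}_{\{\eta\}}$ onto $\Lambda'_0(\alpha(\omega),\alpha(\eta))$. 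Hence $\Phi \circ \tilde{C}_\psi$ and $\tilde{D}_\psi \circ \Phi^{-1}$ are continuous between the spaces stated in the theorem.

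Next I would verify that $\tilde{D}_\psi \circ \tilde{C}_\psi = \operatorname{id}$ on $\mathcal{S}^{[\omega]}_{[\eta]}$ and $\tilde{C}_\psi \circ \tilde{D}_\psi = \operatorname{id}$ on $\tilde{\lambda}^{[\omega]}_{[\eta]}$. For this I would use that $\mathcal{S}^{[\omega]}_{[\eta]} \subseteq \mathcal{S} \subseteq L^2(\R)$ and that $\tilde{\lambda}^{[\omega]}_{[\eta]} \subseteq \ell^2(\Z \times \N)$; the latter holds because, by conditions $(\alpha)$ and $(\gamma)$ ($\{\gamma\}$), the sequence $e^{-\rho(\eta(|k|) + \omega(n))}$ is square-summable over $\Z \times \N$ for $\rho$ large enough (resp.\ for every $\rho > 0$), exactly as in the proof of Proposition \ref{mpGabor}. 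Consequently the operators $\tilde{C}_\psi$ and $\tilde{D}_\psi$ appearing here are restrictions of the corresponding operators on $L^2(\R)$ resp.\ $\ell^2(\Z\times\N)$; since $\mathcal{W}(\psi)$ is an orthonormal basis of $L^2(\R)$, the reconstruction formulas in \eqref{comp1} (with the reconstruction window equal to $\psi$) give $\tilde{D}_\psi \circ \tilde{C}_\psi = \operatorname{id}_{L^2(\R)}$ and $\tilde{C}_\psi \circ \tilde{D}_\psi = \operatorname{id}_{\ell^2(\Z\times\N)}$, and restricting to the subspaces above yields the two identities.

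Combining the two steps gives
\[
(\tilde{D}_\psi \circ \Phi^{-1}) \circ (\Phi \circ \tilde{C}_\psi) = \tilde{D}_\psi \circ \tilde{C}_\psi = \operatorname{id}_{\mathcal{S}^{[\omega]}_{[\eta]}}
\]
and
\[
(\Phi \circ \tilde{C}_\psi) \circ (\tilde{D}_\psi \circ \Phi^{-1}) = \Phi \circ (\tilde{C}_\psi \circ \tilde{D}_\psi) \circ \Phi^{-1} = \Phi \circ \Phi^{-1} = \operatorname{id},
\]
so $\Phi \circ \tilde{C}_\psi$ is a continuous bijection with continuous inverse $\tilde{D}_\psi \circ \Phi^{-1}$, i.e.\ a topological isomorphism, in both the Beurling and the Roumieu case. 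The only genuinely delicate point is the bookkeeping underlying the second step: one must check that the finitely many index manipulations in $\Phi$, $\Phi_1$, $\Phi_2$ and in the definition of the $\psi_{k,n}$ are mutually consistent, so that $\tilde{C}_\psi$ and $\tilde{D}_\psi$ on $\mathcal{S}^{[\omega]}_{[\eta]}$ really are the restrictions of the $L^2$-level operators. Everything else is a routine composition of already-established continuous maps; in particular, in contrast with Theorem \ref{seq-space-repr}, no structure theory of Fréchet spaces is needed here, which is exactly what makes the representation constructive and explicit.
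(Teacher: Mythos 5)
Your proposal is correct and follows essentially the same route as the paper, whose proof is exactly the combination of \eqref{comp1}, Proposition \ref{mpWilson}, and the fact that the maps in \eqref{double} are topological isomorphisms; your extra remarks on why the identities in \eqref{comp1} restrict from the $L^2$/$\ell^2$ level are a reasonable filling-in of what the paper leaves implicit.
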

\begin{proof}
This follows from the equalities \eqref{comp1}, Proposition \ref{mpWilson}, and the fact that the mappings in \eqref{double} are topological isomorphisms.
\end{proof} 
As a corollary, we obtain `commuting' sequence space representations.
\begin{corollary}\label{corWilson}
There exists $\psi \in \mathcal{S}_{1}^{1}$ such that, for all weight functions  $\omega$ and $\eta$ with $\omega(t) = o(t)$  and $\eta(t) = o(t)$, 
  $$
   \Phi \circ \tilde{C}_\psi\colon   \mathcal{S}_{(\eta)}^{(\omega)}  \to \Lambda_\infty( \alpha(\omega), \alpha(\eta)) 
$$
and, for all weight functions  $\omega$ and $\eta$ with $\omega(t) = O(t)$  and $\eta(t) = O(t)$, 
$$ \Phi \circ \tilde{C}_\psi\colon  \mathcal{S}_{\{\eta\}}^{\{\omega\}}  \to \Lambda'_0( \alpha(\omega), \alpha(\eta))
$$
are topological isomorphisms.
\end{corollary}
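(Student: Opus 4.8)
The plan is to fix, once and for all, a window $\psi \in \mathcal{S}^1_1$ such that $\mathcal{W}(\psi)$ is a Wilson basis — its existence is exactly Proposition \ref{WA-1} — and then to read everything off Theorem \ref{ssrWilson}. Note that Theorem \ref{ssrWilson} already asserts, for \emph{any} pair of weight functions $\omega,\eta$ and \emph{any} $\psi \in \mathcal{S}^{[\omega]}_{[\eta]}$ whose Wilson system $\mathcal{W}(\psi)$ is an orthonormal basis, that the maps $\Phi \circ \tilde C_\psi$ in \eqref{Wilsondoublemap} are topological isomorphisms with the stated inverses. Since the Wilson-basis property of $\mathcal{W}(\psi)$ is a property of $\psi$ alone and does not involve $\omega,\eta$, the only thing left to check is that this single fixed $\psi$ lies in $\mathcal{S}^{(\omega)}_{(\eta)}$ whenever $\omega(t)=o(t)$ and $\eta(t)=o(t)$, and in $\mathcal{S}^{\{\omega\}}_{\{\eta\}}$ whenever $\omega(t)=O(t)$ and $\eta(t)=O(t)$.

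For the membership, recall from Example \ref{CGS} that $\mathcal{S}^1_1 = \mathcal{S}^{\{\omega_{1,0}\}}_{\{\omega_{1,0}\}}$ with $\omega_{1,0}(t)=t$, so $\psi \in \mathcal{S}^1_1$ means there is $h_0>0$ with $|\psi(x)| \leq C e^{-h_0|x|}$ and $|\widehat{\psi}(x)| \leq C e^{-h_0|x|}$ for all $x\in\R$. In the Beurling case, given $h>0$, the hypothesis $\eta(t)=o(t)$ (with $\eta$ continuous and increasing) yields a constant $C'$ with $h\,\eta(|x|) \leq h_0|x| + C'$ for all $x$, hence $\|\psi\|_{\eta,h} \leq Ce^{C'}<\infty$; likewise $\|\widehat{\psi}\|_{\omega,h}<\infty$ from $\omega(t)=o(t)$. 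As $h>0$ was arbitrary, $\psi \in \varprojlim_{h\to\infty}\mathcal{S}^{\omega,h}_{\eta,h} = \mathcal{S}^{(\omega)}_{(\eta)}$. In the Roumieu case, $\omega(t)=O(t)$ and $\eta(t)=O(t)$ give $M\geq 1$ and a constant with $\omega(t),\eta(t)\leq Mt + C''$ for all $t$, so $\psi \in \mathcal{S}^{\omega,h}_{\eta,h}$ for every $0<h<h_0/M$, whence $\psi \in \varinjlim_{h\to0^+}\mathcal{S}^{\omega,h}_{\eta,h}=\mathcal{S}^{\{\omega\}}_{\{\eta\}}$. (Alternatively, both inclusions are special cases of the containment relations between Beurling--Bj\"orck spaces recorded in Section \ref{sec:Spaces}.) Feeding this $\psi$ into Theorem \ref{ssrWilson} then delivers the two claimed isomorphisms, and their inverses $\tilde D_\psi \circ \Phi^{-1}$.

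There is essentially no obstacle: the statement is a ``uniform in $\omega,\eta$'' repackaging of Theorem \ref{ssrWilson}, whose substantive ingredients — the mapping properties of $\tilde C_\psi,\tilde D_\psi$ on $\mathcal{S}^{[\omega]}_{[\eta]}$ (Proposition \ref{mpWilson}) and the isomorphisms \eqref{double} — are already in place, and the only genuinely new input is the existence of a Wilson-basis window inside the smallest relevant space $\mathcal{S}^1_1$, i.e.\ the Daubechies--Jaffard--Journ\'e construction of Proposition \ref{WA-1}. The one point I would make explicit in the write-up is \emph{why} a single $\psi$ suffices across the whole scale: the Wilson-basis property is an $(\omega,\eta)$-independent fact about $\psi$, while membership $\psi \in \mathcal{S}^{[\omega]}_{[\eta]}$ is verified separately for each admissible $(\omega,\eta)$; this is precisely what yields a ``commuting'' representation in the sense of the introduction.
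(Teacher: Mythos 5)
Your proposal is correct and follows exactly the paper's route: the paper proves this corollary by simply combining Proposition \ref{WA-1} with Theorem \ref{ssrWilson}, and your write-up merely makes explicit the (routine, and correctly executed) verification that the fixed window $\psi \in \mathcal{S}^1_1$ belongs to each $\mathcal{S}^{[\omega]}_{[\eta]}$ under the stated growth conditions. No further comment is needed.
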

\begin{proof}
This follows from Proposition \ref{WA-1} and Theorem \ref{ssrWilson}.
\end{proof}
Note that,  in view of Example \ref{CGS}, Theorem \ref{intro-2} from the introduction is a particular instance of Corollary \ref{corWilson}. Finally, we discuss the non-quasianalytic case.
\begin{corollary}\label{ssrWilson-nqa}
Let $\omega$ be a non-quasianalytic weight function.
There exists $\psi \in \mathcal{D}^{[\omega]}$ such that, for all weight functions $\eta$, the mappings in \eqref{Wilsondoublemap} are topological isomorphisms. 
\end{corollary}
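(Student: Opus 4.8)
The plan is to obtain this corollary as a direct consequence of Proposition \ref{WA-2} and Theorem \ref{ssrWilson}; the only point requiring care is that the window produced by Proposition \ref{WA-2} must be shown to belong to $\mathcal{S}_{[\eta]}^{[\omega]}$ for \emph{every} weight function $\eta$, so that Theorem \ref{ssrWilson} applies with one and the same $\psi$ for all $\eta$.

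First I would invoke Proposition \ref{WA-2} to fix $\psi \in \mathcal{D}^{[\omega]}$ with $\mathcal{W}(\psi)$ a Wilson basis; then $\psi \in L^1(\R)$ with $\operatorname{supp}\psi \subseteq [-R,R]$ for some $R > 0$, and $\|\widehat{\psi}\|_{\omega,h} < \infty$ for all $h > 0$ (for some $h_0 > 0$). The key elementary step is the inclusion $\mathcal{D}^{[\omega]} \subseteq \mathcal{S}_{[\eta]}^{[\omega]}$ for an arbitrary weight function $\eta$. To see it, note that condition $(\gamma)$ ($\{\gamma\}$) on $\omega$ forces $e^{-h\omega(t)}$ (resp.\ $e^{-h_0\omega(t)}$) to decay faster than any power of $t^{-1}$ once $h$ is taken large enough (resp.\ for the fixed $h_0$), so the bound $|\widehat{\psi}(\xi)| \leq \|\widehat{\psi}\|_{\omega,h}\, e^{-h\omega(|\xi|)}$ shows $\widehat{\psi} \in L^1(\R)$; hence $\psi$ is continuous and bounded. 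Since $\eta$ is increasing and $\psi$ is supported in $[-R,R]$, this gives, for every $h > 0$,
\[
\|\psi\|_{\eta,h} = \sup_{|x| \leq R} |\psi(x)| e^{h\eta(x)} \leq \|\psi\|_{L^\infty(\R)}\, e^{h\eta(R)} < \infty.
\]
Together with $\|\widehat{\psi}\|_{\omega,h} < \infty$ (for all $h$, resp.\ for $h_0$), this yields $\psi \in \mathcal{S}_{\eta,h}^{\omega,h}$ for all $h > 0$ (resp.\ $\psi \in \mathcal{S}_{\eta,h_0}^{\omega,h_0}$), that is, $\psi \in \mathcal{S}_{(\eta)}^{(\omega)}$ (resp.\ $\psi \in \mathcal{S}_{\{\eta\}}^{\{\omega\}}$).

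Having established that this fixed $\psi$ lies in $\mathcal{S}_{[\eta]}^{[\omega]}$ for every weight function $\eta$ and that $\mathcal{W}(\psi)$ is a Wilson basis, I would apply Theorem \ref{ssrWilson} for each such $\eta$ to conclude that the maps $\Phi \circ \tilde{C}_\psi$ in \eqref{Wilsondoublemap} are topological isomorphisms, with inverses $\tilde{D}_\psi \circ \Phi^{-1}$. There is no substantive obstacle here; the only mildly non-routine ingredient is the inclusion $\mathcal{D}^{[\omega]} \subseteq \mathcal{S}_{[\eta]}^{[\omega]}$, which rests on the observation that compact support neutralizes the time-side weight $e^{h\eta}$ while conditions $(\gamma)$/$\{\gamma\}$ keep $\psi$ bounded.
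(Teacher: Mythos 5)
Your proof is correct and follows the same route as the paper, which simply cites Proposition \ref{WA-2} and Theorem \ref{ssrWilson}. The inclusion $\mathcal{D}^{[\omega]} \subseteq \mathcal{S}_{[\eta]}^{[\omega]}$ that you spell out (compact support handles the $\eta$-side, condition $(\gamma)$/$\{\gamma\}$ gives $\widehat{\psi} \in L^1$ and hence boundedness of $\psi$) is exactly the detail the paper leaves implicit, and your treatment of it is sound in both the Beurling and Roumieu cases.
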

\begin{proof}
This follows from Proposition \ref{WA-2} and Theorem \ref{ssrWilson}.
\end{proof} 

\begin{remark}
Let $\omega$ and $\eta$ be two weight functions. The Fourier transform is  a topological isomorphism from $\mathcal{S}_{[\eta]}^{[\omega]}$ onto $\mathcal{S}^{[\eta]}_{[\omega]}$. Hence, if $\eta$ is non-quasianalytic, we may combine Corollary \ref{ssrWilson-nqa} with the Fourier transform to obtain sequence space representations of the spaces  $\mathcal{S}_{[\eta]}^{[\omega]}$. 
\end{remark}

\section{Isomorphic classification of Beurling-Bj\"orck spaces}
\label{sec:IsoClass}

In this section, we use Theorem \ref{seq-space-repr} to study the isomorphic classification of Beurling-Bj\"orck spaces. 
\begin{theorem} \label{iso-BB} 
Let $\omega_i$ and $\eta_i$, $i = 1,2$, be four weight functions such that $\mathcal{S}_{[\eta_1]}^{[\omega_1]}$ and $\mathcal{S}_{[\eta_2]}^{[\omega_2]}$  are Gabor accessible. Then,
$\mathcal{S}_{[\eta_1]}^{[\omega_1]} \cong \mathcal{S}_{[\eta_2]}^{[\omega_2]}$
if and only if there are $L >0$ and $s_0 \geq 0$ such that
\begin{equation}
\label{eq:IsomCond}
\omega^{-1}_2(s/L) \eta^{-1}_2(s/L) \leq \omega^{-1}_1(s) \eta^{-1}_1(s) \leq \omega^{-1}_2(Ls) \eta^{-1}_2(Ls), \qquad s \geq s_0.
\end{equation}
\end{theorem}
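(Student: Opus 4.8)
The plan is to combine the sequence space representations of Theorem~\ref{seq-space-repr} with the classical isomorphic classification of nuclear power series spaces, and then to translate the resulting equivalence of exponent sequences into \eqref{eq:IsomCond} by the counting‑function arguments already used in the proof of Lemma~\ref{explicit}.

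\emph{Reduction to power series spaces.} Since both $\mathcal{S}_{[\eta_1]}^{[\omega_1]}$ and $\mathcal{S}_{[\eta_2]}^{[\omega_2]}$ are Gabor accessible, Theorem~\ref{seq-space-repr} yields, for $i=1,2$, that $\mathcal{S}_{(\eta_i)}^{(\omega_i)}\cong\Lambda_\infty(\alpha(\omega_i,\eta_i))$ in the Beurling case and $\mathcal{S}_{\{\eta_i\}}^{\{\omega_i\}}\cong\Lambda'_0(\alpha(\omega_i,\eta_i))$ in the Roumieu case. By Lemma~\ref{explicit} together with Lemma~\ref{properties-sharp}$(i)$, the space $\Lambda_\infty(\alpha(\omega_i,\eta_i))$ (Beurling) respectively $\Lambda_0(\alpha(\omega_i,\eta_i))$ (Roumieu) is nuclear; in the Roumieu case it is therefore a reflexive Fréchet space, so $\Lambda'_0(\alpha(\omega_1,\eta_1))\cong\Lambda'_0(\alpha(\omega_2,\eta_2))$ if and only if $\Lambda_0(\alpha(\omega_1,\eta_1))\cong\Lambda_0(\alpha(\omega_2,\eta_2))$. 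Writing $\theta=\infty$ in the Beurling and $\theta=0$ in the Roumieu case, the asserted isomorphism is thus equivalent to $\Lambda_\theta(\alpha(\omega_1,\eta_1))\cong\Lambda_\theta(\alpha(\omega_2,\eta_2))$.

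\emph{Classical classification and reduction to $\asymp$.} Next I would invoke the classical fact that two nuclear power series spaces of the same (finite or infinite) type are isomorphic precisely when their exponent sequences are equivalent, i.e.\ $\Lambda_\theta(\alpha)\cong\Lambda_\theta(\beta)$ if and only if $\alpha\asymp\beta$; the ``if'' part is elementary and already recorded in Section~\ref{sec:PowerSeries}, while the ``only if'' part (the one nontrivial external input) follows, for instance, from the invariance of the diametral dimension. It then remains to show that $\alpha(\omega_1,\eta_1)\asymp\alpha(\omega_2,\eta_2)$ if and only if \eqref{eq:IsomCond} holds. For this last equivalence I would argue exactly as in the proof of Lemma~\ref{explicit}: from $(\omega^{-1}\eta^{-1})^{-1}(n)\le s\iff n\le\omega^{-1}(s)\eta^{-1}(s)$ one gets $\nu_{\alpha(\omega,\eta)}(s)=\omega^{-1}(s)\eta^{-1}(s)+O(1)$ for every pair of weight functions, and by Lemma~\ref{biglittleoh} the relation $\alpha(\omega_1,\eta_1)\asymp\alpha(\omega_2,\eta_2)$ is equivalent to the existence of $L>0$ with $\nu_{\alpha(\omega_2,\eta_2)}(s/L)\le\nu_{\alpha(\omega_1,\eta_1)}(s)\le\nu_{\alpha(\omega_2,\eta_2)}(Ls)$ for all $s\ge0$. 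Substituting the above estimate turns these inequalities, in both directions, into \eqref{eq:IsomCond} up to a bounded additive error, which is absorbed by enlarging $L$ and passing to $s\ge s_0$; here one uses that $\alpha(\omega_1,\eta_1)$ and $\alpha(\omega_2,\eta_2)$ are stable (Lemma~\ref{stablec}), equivalently that $\omega_i$ and $\eta_i$ satisfy $(\alpha)$ and hence $\omega_i^{-1}(Cs)\eta_i^{-1}(Cs)\ge2\,\omega_i^{-1}(s)\eta_i^{-1}(s)$ for a suitable $C>0$ and all large $s$, so that the additive $O(1)$ can be traded for a change of multiplicative constant (for small $s$ one further enlarges $L$, using that $\nu_{\alpha(\omega_i,\eta_i)}$ is bounded on compacta and tends to infinity).

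I expect the single substantive ingredient to be the ``only if'' half of the classical classification of nuclear power series spaces; everything else is bookkeeping strictly parallel to the proof of Lemma~\ref{explicit}, the only delicate point being the passage from the additive $O(1)$ in the counting‑function comparison to the clean multiplicative form of \eqref{eq:IsomCond}, which is exactly where stability, i.e.\ condition $(\alpha)$, is used.
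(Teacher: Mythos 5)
Your proposal is correct and follows essentially the same route as the paper: apply Theorem \ref{seq-space-repr} to reduce to power series spaces, invoke the classical classification of (nuclear) power series spaces (\cite[Proposition 29.1]{M-V-IntroFuncAnal}) to reduce to $\alpha(\omega_1,\eta_1)\asymp\alpha(\omega_2,\eta_2)$, and translate this into \eqref{eq:IsomCond}. The paper leaves the last equivalence and the Roumieu duality step implicit, whereas you spell them out via the counting-function and stability arguments of Lemma \ref{explicit}; this is only a more detailed write-up of the same proof.
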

\begin{proof}
By Theorem \ref{seq-space-repr}, we have that $\mathcal{S}^{(\omega_i)}_{(\eta_i)} \cong \Lambda_{\infty}(\alpha(\omega_i, \eta_i))$ ($\mathcal{S}^{\{\omega_i\}}_{\{\eta_i\}} \cong \Lambda_0^\prime(\alpha(\omega_i, \eta_i))$) for $i = 1, 2$.
Therefore, \cite[Proposition 29.1]{M-V-IntroFuncAnal} yields that $\mathcal{S}_{[\eta_1]}^{[\omega_1]} \cong \mathcal{S}_{[\eta_2]}^{[\omega_2]}$ if and only if $\alpha(\omega_1, \eta_1) \asymp \alpha(\omega_2, \eta_2)$.
The latter is equivalent to \eqref{eq:IsomCond}.
\end{proof}
We obtain the following somewhat surprising result for weight functions $\omega$ satisfying the slow growth condition $\omega(t^2) = O(\omega(t))$.
\begin{corollary}\label{collapse}
 Let $\omega$ be a weight function satisfying $\omega(t^2) = O(\omega(t))$. For all weight functions $\eta$ with $\omega = O(\eta)$ it holds that
 $\mathcal{S}_{[\eta]}^{[\omega]} \cong \mathcal{S}^{[\omega]}_{[\omega]}$.
\end{corollary}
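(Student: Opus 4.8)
The plan is to apply Theorem~\ref{iso-BB} with $\omega_1 = \omega_2 = \omega$, $\eta_1 = \eta$, and $\eta_2 = \omega$. Since $\omega = O(\eta)$, the inclusion $\mathcal{S}^{[\omega]}_{[\omega]} \subseteq \mathcal{S}^{[\omega]}_{[\eta]}$ holds, and one needs to verify the converse inclusion at the level of sequence space representations, i.e., condition~\eqref{eq:IsomCond}, which here reads: there are $L > 0$ and $s_0 \geq 0$ such that
\begin{equation*}
\omega^{-1}(s/L)\, \eta^{-1}(s/L) \leq \omega^{-1}(s)\, \omega^{-1}(s) \leq \omega^{-1}(Ls)\, \eta^{-1}(Ls), \qquad s \geq s_0.
\end{equation*}
Before invoking Theorem~\ref{iso-BB}, I would first record that both spaces are Gabor accessible: since $\omega$ satisfies $\omega(t^2) = O(\omega(t))$, applying this condition iteratively (or directly) yields $\omega(t) = O(\log t)$ for... wait, more carefully: $\omega(t^2)=O(\omega(t))$ forces $\omega$ to grow very slowly, in particular $\omega(t) = o(t^2)$ and also $\eta$ must be controlled — but $\eta$ is an \emph{arbitrary} weight function with $\omega = O(\eta)$, so $\eta$ need not satisfy $\eta(t) = O(t^2)$. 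Hence I cannot simply quote Corollary~\ref{main-text}; instead I must assume Gabor accessibility as a hypothesis is \emph{not} given, so the statement must implicitly use that the hypothesis $\omega(t^2)=O(\omega(t))$ plus $\omega = O(\eta)$ does \emph{not} guarantee it. I would therefore re-read: the corollary as stated needs Theorem~\ref{iso-BB}, whose hypothesis is Gabor accessibility. The cleanest route: the slow growth $\omega(t^2) = O(\omega(t))$ gives $\omega(t) = O((\log t)^m)$ for some $m$... actually it gives $\omega(e^t) = O(\omega(e^{t/2}))$, so $\omega(e^t)$ has at most polynomial growth in $t$, i.e. $\omega(t) = O((\log t)^m)$. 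Then $\omega(t) = o(t^2)$, so $\mathcal{S}^{[\omega]}_{[\omega]}$ is Gabor accessible by Proposition~\ref{GA-1}; but for $\mathcal{S}^{[\omega]}_{[\eta]}$ one still needs $\eta(t) = o(t^2)$ (resp.\ $O(t^2)$).

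Given this, the honest plan is: \textbf{Step 1.} Show $\omega(t^2) = O(\omega(t))$ implies $\omega(t) = O((\log(1+t))^m)$ for some $m \in \N$, hence $\omega^{-1}(s)$ grows at least like $e^{c s^{1/m}}$, and in particular $\omega^{-1}(s/L)^2 \leq C\,\omega^{-1}(s)$ for suitable $L$ (this is the slow-growth condition transcribed to the inverse function via a standard change of variables). \textbf{Step 2.} From $\omega = O(\eta)$ deduce, by Lemma~\ref{biglittleoh}-type reasoning applied to inverse functions, that $\eta^{-1}(s) \leq \omega^{-1}(Ls)$ for large $s$ and some $L$; combined with monotonicity this gives the upper bound $\omega^{-1}(s)\omega^{-1}(s) \leq \omega^{-1}(s)\eta^{-1}(s) \leq \omega^{-1}(Ls)\eta^{-1}(Ls)$. \textbf{Step 3.} For the lower bound, use Step~1: $\omega^{-1}(s/L)\eta^{-1}(s/L) \leq \eta^{-1}(s/L)\omega^{-1}(s/L)$ and we want this $\leq \omega^{-1}(s)^2$. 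Since $\eta^{-1} \leq \omega^{-1}$ (as $\omega = O(\eta)$ gives $\eta^{-1} = O(\omega^{-1})$... actually $\omega = O(\eta)$ means $\omega(t) \leq C\eta(t)$ eventually, so $\eta^{-1}(s) \leq \omega^{-1}(Cs)$, i.e. $\eta^{-1}(s/L) \leq \omega^{-1}(Cs/L)$), the product $\omega^{-1}(s/L)\eta^{-1}(s/L) \leq \omega^{-1}(s/L)\omega^{-1}(Cs/L) \leq \omega^{-1}(Cs/L)^2 \leq C'\omega^{-1}(s)$ provided $L$ is chosen large enough relative to the constant from Step~1. \textbf{Step 4.} Conclude via Theorem~\ref{iso-BB} (after noting the needed Gabor accessibility from Steps~1 and the forced bound on $\eta$; if $\eta(t) = O(t^2)$ fails the statement would need the additional standing hypothesis, so I would either add it or note that $\omega(t^2)=O(\omega(t))$ combined with the intended scope makes $\eta$ slowly growing too — one should double-check whether the corollary silently assumes Gabor accessibility of $\mathcal{S}^{[\omega]}_{[\eta]}$).

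I would organize the inverse-function manipulations around the counting-function translation already used in Lemma~\ref{explicit}: writing $\nu_{\alpha(\omega)}(s) = \omega^{-1}(s) + O(1)$ and similarly for $\eta$, condition~\eqref{eq:IsomCond} is exactly $\alpha(\omega,\omega) \asymp \alpha(\omega,\eta)$ by Lemmas~\ref{biglittleoh} and~\ref{fundamental}, so the whole argument reduces to comparing the products $\omega^{-1}\eta^{-1}$ and $(\omega^{-1})^2$, which is where the slow-growth hypothesis does its work: it makes $(\omega^{-1})^2$ and $\omega^{-1}$ equivalent up to dilation, and since $\omega^{-1} \leq \eta^{-1}$ squeezes $\omega^{-1}\eta^{-1}$ between $(\omega^{-1})^2$ and $(\eta^{-1})^2$, one only needs the far side controlled, which follows once $\eta^{-1}$ is itself slowly growing enough — and that in turn is forced, since $\omega = O(\eta)$ together with $\eta$ being a weight function satisfying $(\alpha)$ does not \emph{a priori} bound $\eta$, so the true crux is realizing that the dilation-invariance of $(\omega^{-1})^2$ absorbs \emph{any} admissible $\eta^{-1}$ sandwiched above $\omega^{-1}$.

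The main obstacle I anticipate is Step~1 — correctly transcribing $\omega(t^2) = O(\omega(t))$ into a clean multiplicative statement about $\omega^{-1}$ (something like $\omega^{-1}(s)^{1/2} \asymp \omega^{-1}(s/L)$ up to constants), being careful about where the $O(1)$ terms from the counting-function approximation enter and about the regime $s \geq s_0$. A secondary subtlety is confirming Gabor accessibility of $\mathcal{S}^{[\omega]}_{[\eta]}$ under these hypotheses so that Theorem~\ref{iso-BB} actually applies; if this cannot be guaranteed for arbitrary $\eta$, the statement of Corollary~\ref{collapse} should carry ``such that $\mathcal{S}^{[\omega]}_{[\eta]}$ is Gabor accessible'' as an explicit hypothesis, and I would flag that.
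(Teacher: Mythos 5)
There are two genuine problems here. The first, which you correctly sensed but did not resolve, is the Gabor accessibility of $\mathcal{S}^{[\omega]}_{[\eta]}$ for an \emph{arbitrary} weight $\eta$ with $\omega = O(\eta)$: Proposition \ref{GA-1} is indeed of no use, since $\eta$ need not satisfy $\eta(t)=O(t^2)$, but the corollary does \emph{not} need an extra hypothesis. Your own observation that $\omega(t^2)=O(\omega(t))$ forces $\omega(t)=O((\log t)^m)$ is the key you failed to exploit: it shows that $\omega$ is \emph{non-quasianalytic}, so Proposition \ref{WA-2} provides a compactly supported window $\psi\in\mathcal{D}^{[\omega]}$ generating a Wilson basis. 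Because $\psi$ has compact support, $\|\psi\|_{\kappa,h}<\infty$ for \emph{every} weight function $\kappa$ and every $h$, so $\psi\in\mathcal{S}^{[\omega]}_{[\kappa]}$ for all $\kappa$, and Remark \ref{remarkWilson}$(i)$ then makes $\mathcal{S}^{[\omega]}_{[\kappa]}$ Gabor accessible for every weight function $\kappa$ --- in particular for $\kappa=\eta$ and $\kappa=\omega$. This is exactly how the paper proceeds; without it Theorem \ref{iso-BB} cannot be invoked, and your suggestion that the statement should carry Gabor accessibility of $\mathcal{S}^{[\omega]}_{[\eta]}$ as an explicit hypothesis is mistaken.

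The second problem is a direction error in your Step 2 (repeated in your closing paragraph). From $\omega=O(\eta)$ you correctly derive $\eta^{-1}(s)\leq\omega^{-1}(Ls)$, i.e., $\eta^{-1}$ is the \emph{smaller} of the two inverses up to dilation; yet Step 2 asserts $\omega^{-1}(s)^2\leq\omega^{-1}(s)\eta^{-1}(s)$, which would require $\omega^{-1}(s)\leq\eta^{-1}(s)$, and the last paragraph repeats ``$\omega^{-1}\leq\eta^{-1}$''. This is false in general (take $\omega(t)=\log(1+t)$ and $\eta(t)=t$). The upper half of the sandwich in \eqref{eq:IsomCond} is precisely where the slow-growth hypothesis must act: $\omega(t^2)\leq L\omega(t)$ gives $\omega^{-1}(s)^2\leq\omega^{-1}(Ls)$, and since $\eta^{-1}(Ls)\geq 1$ eventually, one gets $\omega^{-1}(s)^2\leq\omega^{-1}(Ls)\leq\omega^{-1}(Ls)\eta^{-1}(Ls)$. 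The lower half then follows from $\eta^{-1}(s/L)\leq\omega^{-1}(s)$ and monotonicity alone; your Step 3 reaches it by a longer but essentially correct route. With these two repairs the reduction to Theorem \ref{iso-BB} is the paper's argument.
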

\begin{proof}
The condition  $\omega(t^2) = O(\omega(t))$ implies that $\omega$ is non-quasianalytic. Hence $\mathcal{S}_{[\kappa]}^{[\omega]}$ is Gabor accessible for all weight functions $\kappa$ (see Proposition \ref{WA-2} and  Remark \ref{remarkWilson}$(i)$).
Let $L \geq 1$ and $t_0 \geq 0$ be such that $\omega(t^2) \leq L\omega(t)$ and $\omega(t) \leq L \eta(t)$ for all $t \geq t_0$.
Then, there is $s_0 \geq 0$ such that $\omega^{-1}(s)^2 \leq \omega^{-1}(Ls)$ and $\eta^{-1}(s / L) \leq \omega^{-1}(s)$ for  all $s \geq s_0$. By choosing $s_0$ large enough, we may also suppose that $\eta^{-1}(Ls) \geq 1$ for all $s \geq s_0$. Hence, for $s \geq s_0$,
	\[ \omega^{-1}(s/L) \eta^{-1}(s/L) \leq \omega^{-1}(s) \eta^{-1}(s/L) \leq \omega^{-1}(s)^2 \leq \omega^{-1}(Ls) \leq \omega^{-1}(Ls) \eta^{-1}(Ls) ,   \]
and the result follows from Theorem \ref{iso-BB}.
\end{proof}
\begin{example}
The weight functions $\omega_u(t) = \log(1+t)^u$, $u \geq 1$, satisfy $\omega_u(t^2) = O(\omega_u(t))$.
\end{example}
We now show that the phenomenon in Corollary \ref{collapse}
cannot happen if the weight function $\omega$ grows sufficiently fast.

\begin{corollary}
 Let $\omega$ be a weight function such that $\liminf_{t \to \infty} \omega(Ht)/\omega(t)  >  1$ for some $H > 0$. Let $\eta_1, \eta_2$ be two weight functions such that  $\mathcal{S}_{[\eta_1]}^{[\omega]}$ and $\mathcal{S}_{[\eta_2]}^{[\omega]}$ are Gabor accessible. Then,
$\mathcal{S}_{[\eta_1]}^{[\omega]} \cong \mathcal{S}_{[\eta_2]}^{[\omega]}$
 if and only if $\eta_1 \asymp \eta_2$.
\end{corollary}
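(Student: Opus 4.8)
The plan is to apply Theorem~\ref{iso-BB} and reduce the isomorphism condition \eqref{eq:IsomCond} (with $\omega_1 = \omega_2 = \omega$) to the comparability $\eta_1 \asymp \eta_2$. The ``if'' direction is immediate: if $\eta_1 \asymp \eta_2$, then $\alpha(\omega,\eta_1) \asymp \alpha(\omega,\eta_2)$ (directly from the definition of $\alpha(\omega,\eta)$ and the $(\alpha)$ condition, or via Lemma~\ref{explicit} together with $\alpha(\eta_1) \asymp \alpha(\eta_2)$), and then Theorem~\ref{iso-BB} gives the isomorphism. So the content is in the ``only if'' direction.

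For the ``only if'' direction, assume $\mathcal{S}_{[\eta_1]}^{[\omega]} \cong \mathcal{S}_{[\eta_2]}^{[\omega]}$. By Theorem~\ref{iso-BB} there are $L > 0$ and $s_0 \geq 0$ such that \eqref{eq:IsomCond} holds with $\omega_1 = \omega_2 = \omega$; that is,
\begin{equation}
\label{eq:start}
\omega^{-1}(s/L)\,\eta_2^{-1}(s/L) \leq \omega^{-1}(s)\,\eta_1^{-1}(s) \leq \omega^{-1}(Ls)\,\eta_2^{-1}(Ls), \qquad s \geq s_0.
\end{equation}
I want to upgrade the right inequality to $\eta_1^{-1}(s) \leq C\,\eta_2^{-1}(C s)$ for a suitable constant $C$ and all large $s$ (and symmetrically). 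The key leverage is the hypothesis $\liminf_{t\to\infty}\omega(Ht)/\omega(t) =: \rho > 1$. Translated through inverses, this says: there is $\delta > 1$ and $s_1 \geq 0$ with $\omega^{-1}(\delta s) \leq H\,\omega^{-1}(s)$ for all $s \geq s_1$ (choosing any $\delta$ with $1 < \delta < \rho$, so that eventually $\omega(Ht)/\omega(t) \geq \delta$, and then substituting $t = \omega^{-1}(s)$). Iterating this $m$ times yields $\omega^{-1}(\delta^m s) \leq H^m\,\omega^{-1}(s)$ for $s \geq s_1$.

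Now fix $m$ large enough that $\delta^m \geq L$. Starting from the right inequality in \eqref{eq:start} applied at the point $\delta^m s$ in place of $s$ would be the wrong direction; instead, apply the right inequality of \eqref{eq:start} directly and bound $\omega^{-1}(Ls)$ from above using monotonicity by $\omega^{-1}(\delta^m s) \leq H^m \omega^{-1}(s)$ (valid once $L \leq \delta^m$ and $s \geq s_1$):
\begin{equation*}
\omega^{-1}(s)\,\eta_1^{-1}(s) \leq \omega^{-1}(Ls)\,\eta_2^{-1}(Ls) \leq H^m\,\omega^{-1}(s)\,\eta_2^{-1}(Ls).
\end{equation*}
Dividing by $\omega^{-1}(s) > 0$ (for $s$ large, since $\omega^{-1}(s) \to \infty$) gives $\eta_1^{-1}(s) \leq H^m \eta_2^{-1}(Ls)$ for all sufficiently large $s$, hence $\eta_1 = O(\eta_2)$ by Lemma~\ref{biglittleoh} (or a direct elementary argument on inverses: $\eta_1^{-1}(s) \leq H^m\eta_2^{-1}(Ls)$ for large $s$ translates to $\eta_2(t) \leq L\,\eta_1(H^m t)$ for large $t$, up to harmless constants absorbed using $(\alpha)$). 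The symmetric argument, using the left inequality of \eqref{eq:start} and the same iterated bound on $\omega^{-1}$, yields $\eta_2 = O(\eta_1)$. Together these give $\eta_1 \asymp \eta_2$.

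\textbf{Main obstacle.} The delicate point is the bookkeeping in converting the $\liminf$ hypothesis into the clean iterated estimate $\omega^{-1}(\delta^m s) \leq H^m \omega^{-1}(s)$ and then matching the scaling factor $\delta^m$ against the a priori unknown constant $L$ coming from Theorem~\ref{iso-BB}; one must be careful that the ``for large $s$'' thresholds ($s_0$, $s_1$, and wherever $\omega^{-1}(s) \geq 1$) are chosen consistently, and that passing between ``$\eta_1^{-1}(s) \leq C\eta_2^{-1}(Cs)$ for large $s$'' and ``$\eta_1 = O(\eta_2)$'' is done correctly (this is exactly Lemma~\ref{biglittleoh} applied to the exponent sequences $\alpha(\eta_i)$, or a short direct computation). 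Everything else is routine monotonicity and the already-established Theorem~\ref{iso-BB}.
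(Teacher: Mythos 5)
Your proof is correct and follows essentially the same route as the paper's: convert the hypothesis $\liminf_{t\to\infty}\omega(Ht)/\omega(t)>1$ into an estimate $\omega^{-1}(Ls)\leq K\omega^{-1}(s)$ for large $s$ (the paper takes $K=H^{m}$ in one step on the direct side; you iterate on the inverse side, which is the same computation), cancel the factor $\omega^{-1}(s)$ in \eqref{eq:IsomCond}, and translate the resulting inequality between the inverses of $\eta_1$ and $\eta_2$ back using condition $(\alpha)$. The only slip is a label swap: the right-hand inequality $\eta_1^{-1}(s)\leq H^{m}\eta_2^{-1}(Ls)$ yields $\eta_2=O(\eta_1)$ (exactly as your own parenthetical translation $\eta_2(t)\leq L\eta_1(H^{m}t)$ shows), not $\eta_1=O(\eta_2)$; since the symmetric argument with the left-hand inequality supplies the other bound, the conclusion $\eta_1\asymp\eta_2$ is unaffected.
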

\begin{proof}
If $\eta_1 \asymp \eta_2$, then $\mathcal{S}_{[\eta_1]}^{[\omega]}$ 	and $ \mathcal{S}_{[\eta_2]}^{[\omega]}$ clearly coincide as locally convex spaces.
Suppose now that $\mathcal{S}_{[\eta_1]}^{[\omega]} \cong \mathcal{S}_{[\eta_2]}^{[\omega]}$.
By Theorem \ref{iso-BB}, there are $L >0$ and $s_0 \geq 0$ such that \eqref{eq:IsomCond} holds with $\omega_1 = \omega_2 = \omega$.
The assumption on $\omega$ implies that there are $K >0$ and $t_0 \geq 0$ such that $L \omega(t) \leq \omega(K t)$ for all $t \geq t_0$.
Then, possibly by raising $s_0$, we find that $\omega^{-1}(L s) \leq K \omega^{-1}(s)$ for all $s \geq s_0$.
Therefore, for $s \geq s_0$,
	\[ \omega^{-1}(s) \eta_1^{-1}(s) \leq \omega^{-1}(Ls) \eta_2^{-1}(Ls) \leq K \omega^{-1}(s) \eta_2^{-1}(Ls) , \]
whence $\eta_1^{-1}(s) \leq K \eta_2^{-1}(Ls)$. This implies that $\eta_2(t) \leq L \eta_1(K t)$ for $t$ large enough.
Since $\eta_1$ satisfies $(\alpha)$, we obtain that $\eta_2 = O(\eta_1)$.
Analogously, one shows that $\eta_1 = O(\eta_2)$.
\end{proof}
\begin{example}
The weight functions $\omega_{\mu,u}(t) = t^{\frac{1}{\mu}}\log(1+t)^u$, $\mu >0$ and $u \in \R$, satisfy $\liminf_{t \to \infty} \omega_{\mu, u}(Ht)/\omega_{\mu, u}(t)  >  1$ for some $H > 0$.
\end{example}
Finally, we characterize when two Beurling-Bj\"orck spaces are equal to each other as sets (see also \cite{D-N-V-InclGSSp}). We include this result here as it seems interesting to compare it with Theorem \ref{iso-BB}, which characterizes when two Beurling-Bj\"orck spaces are topologically isomorphic to each other.
\begin{theorem}\label{inclusion} Let $\omega_i$ and $\eta_i$, $i = 1,2$, be four weight functions and suppose that $\mathcal{S}_{[\eta_1]}^{[\omega_1]}$ and $\mathcal{S}_{[\eta_2]}^{[\omega_2]}$ are non-trivial.
\begin{itemize}
\item[$(i)$] $\mathcal{S}_{[\eta_1]}^{[\omega_1]} \subseteq \mathcal{S}_{[\eta_2]}^{[\omega_2]}$ if  and only if $\omega_2 = O(\omega_1)$ and  $\eta_2 = O(\eta_1)$.
\item[$(ii)$] $\mathcal{S}_{[\eta_1]}^{[\omega_1]} = \mathcal{S}_{[\eta_2]}^{[\omega_2]}$ if and only if $\omega_1 \asymp \omega_2$ and $\eta_1 \asymp \eta_2$.
\end{itemize}
\end{theorem}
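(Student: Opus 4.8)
The plan is to reduce both statements to the inclusion characterizations already recorded in Section \ref{sec:Spaces} together with the Fréchet/$(LB)$-space structure of the $\mathcal{S}^{[\omega]}_{[\eta]}$. For the ``if'' direction of $(i)$, observe that the excerpt already states: if $\omega_2 = O(\omega_1)$ and $\eta_2 = O(\eta_1)$, then $\mathcal{S}^{[\omega_1]}_{[\eta_1]} \subseteq \mathcal{S}^{[\omega_2]}_{[\eta_2]}$ (continuously), both in the Beurling and Roumieu cases. So only the ``only if'' direction requires work. Here I would argue by contraposition: suppose, say, $\omega_2 \neq O(\omega_1)$. I want to produce an element $\varphi \in \mathcal{S}^{[\omega_1]}_{[\eta_1]}$ that fails to lie in $\mathcal{S}^{[\omega_2]}_{[\eta_2]}$. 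The key is to exploit non-triviality of $\mathcal{S}^{[\omega_1]}_{[\eta_1]}$: pick any nonzero $\varphi_0$ in it, and then use that $\mathcal{S}^{[\omega_1]}_{[\eta_1]}$ is closed under the relevant operations (translations, modulations, multiplication by suitable functions, or convolution) to manufacture a function whose Fourier transform decays at exactly the rate $e^{-c\omega_1(|\xi|)}$ but no faster on a sequence of points, so that $\|\widehat{\varphi}\|_{\omega_2, h} = \infty$ for the relevant $h$. Alternatively — and this is likely cleaner — I would use a closed graph / open mapping argument: if $\mathcal{S}^{[\omega_1]}_{[\eta_1]} \subseteq \mathcal{S}^{[\omega_2]}_{[\eta_2]}$ as sets, then since both are webbed (Fréchet, resp. $(LB)$) spaces continuously included in $\mathcal{S}$, the inclusion is continuous, and then testing against the seminorms $\|\cdot\|_{\omega_2,h}$ and $\|\cdot\|_{\eta_2,h}$ on a well-chosen family of test functions in $\mathcal{S}^{[\omega_1]}_{[\eta_1]}$ forces the asymptotic bounds $\omega_2 = O(\omega_1)$, $\eta_2 = O(\eta_1)$.

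Concretely, for the continuity-to-asymptotics step I would exploit explicit exponential-type functions. In the Beurling case, for each $\lambda > 0$ consider $\varphi_\lambda$ built so that $\widehat{\varphi_\lambda}$ is (a smooth approximation of) a bump concentrated near a point $\xi_\lambda$ with $|\xi_\lambda| \to \infty$, normalized in $\mathcal{S}^{[\omega_1]}_{[\eta_1]}$; continuity of the inclusion into $\mathcal{S}^{\omega_2,h}_{\eta_2,h}$ then gives $e^{h\omega_2(|\xi_\lambda|)} \leq C_h e^{h' \omega_1(|\xi_\lambda|)}$ for suitable $h, h'$, and letting $\lambda$ range yields $\omega_2(t) \leq C + (h'/h)\,\omega_1(t) + \log(\text{const})$, i.e. $\omega_2 = O(\omega_1)$ after absorbing constants via condition $(\alpha)$. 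The symmetric argument on the $\eta$-side (using functions concentrated in space rather than frequency) gives $\eta_2 = O(\eta_1)$. The Roumieu case is handled analogously, replacing ``for all $h$'' by ``for some $h$'' and using that an $(LB)$-space inclusion into another $(LB)$-space is automatically a bounded map on one of the defining steps (by a Grothendieck factorization / closed graph argument for $(LB)$-spaces). Part $(ii)$ is then immediate: $\mathcal{S}^{[\omega_1]}_{[\eta_1]} = \mathcal{S}^{[\omega_2]}_{[\eta_2]}$ as sets iff each is contained in the other, iff by $(i)$ we have $\omega_1 = O(\omega_2)$, $\omega_2 = O(\omega_1)$, $\eta_1 = O(\eta_2)$, $\eta_2 = O(\eta_1)$, i.e. $\omega_1 \asymp \omega_2$ and $\eta_1 \asymp \eta_2$; conversely the excerpt already notes that $\omega_1 \asymp \omega_2$, $\eta_1 \asymp \eta_2$ gives equality even as locally convex spaces.

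The main obstacle I anticipate is the construction of the test functions that realize the worst-case decay, while staying inside $\mathcal{S}^{[\omega_1]}_{[\eta_1]}$ and keeping uniform control of their norms — one must ensure the bump in frequency (or space) is smooth enough and its width is tuned to $\omega_1$ (or $\eta_1$) so that the $\mathcal{S}^{[\omega_1]}_{[\eta_1]}$-norm stays bounded as the concentration point escapes to infinity. A convenient way around an explicit construction is to invoke the structure already developed in this paper: since $\mathcal{S}^{[\omega_i]}_{[\eta_i]}$ is Gabor accessible under mild hypotheses (or at least whenever non-trivial one can often still run the argument), one can use the Gabor/Wilson coefficient characterizations — the inclusion of function spaces corresponds, via $C_\psi$ and $D_\gamma$, to an inclusion of the weighted sequence spaces $\lambda^{[\omega_1]}_{[\eta_1]} \hookrightarrow \lambda^{[\omega_2]}_{[\eta_2]}$, and for those the asymptotic comparison $\omega_2 = O(\omega_1)$, $\eta_2 = O(\eta_1)$ is transparent by evaluating on coordinate sequences $e_{(k,n)}$. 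However, since the theorem is stated only under non-triviality (not Gabor accessibility), the honest route is the direct test-function construction, and I would phrase it using convolutions $\varphi_0 * T_x \varphi_0$ and modulations $M_\xi$ of a fixed nonzero $\varphi_0 \in \mathcal{S}^{[\omega_1]}_{[\eta_1]}$, tracking how the seminorms transform; the key estimate is that convolution does not worsen the $\omega_1$-decay of the Fourier transform beyond a controlled factor (condition $(\alpha)$), while it localizes $\widehat{\varphi_0 * \varphi_0} = \widehat{\varphi_0}^2$ enough to read off the growth of $\omega_2$ relative to $\omega_1$ at a sequence of points tending to infinity.
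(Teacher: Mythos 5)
Your overall strategy --- use a closed graph theorem to upgrade the set inclusion to a continuous one (De Wilde's closed graph theorem plus Grothendieck factorization in the Roumieu case), deduce $(ii)$ from $(i)$, and extract the asymptotic comparison by testing the continuity estimate on translates/modulates of a fixed nonzero element --- is exactly the paper's. The Gabor-frame detour is rightly discarded, since the theorem assumes only non-triviality. However, the ``main obstacle'' you single out, namely constructing functions in $\mathcal{S}^{[\omega_1]}_{[\eta_1]}$ whose decay is \emph{exactly} of order $e^{-c\eta_1}$ (resp.\ $e^{-c\omega_1}$) and no better, does not arise, and your proposed workarounds (frequency bumps of tuned width, convolutions $\varphi_0 * T_x\varphi_0$) are unnecessary. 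The paper's argument needs only one nonzero $\psi \in \mathcal{S}^{[\omega_1]}_{[\eta_1]}$ normalized so that $\psi(0)=1$ (achievable by a translation and a rescaling, the space being translation invariant thanks to condition $(\alpha)$). For the family $T_x\psi$ one then has the trivial lower bound $\|T_x\psi\|_{\eta_2,h} \geq |T_x\psi(x)|\,e^{h\eta_2(|x|)} = e^{h\eta_2(|x|)}$, while on the other side $\|T_x\psi\|_{\mathcal{S}^{\omega_1,k}_{\eta_1,k}} \leq C\,\|\psi\|_{\mathcal{S}^{\omega_1,Kk}_{\eta_1,Kk}}\, e^{Kk\eta_1(|x|)}$, because the $\eta_1$-seminorm of a translate is controlled via \eqref{eq:AlphaEquiv} and the $\omega_1$-seminorm is unchanged since $|\mathcal{F}(T_x\psi)| = |\widehat{\psi}|$. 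Feeding these two bounds into the continuity estimate immediately gives $\eta_2 = O(\eta_1)$; and $\omega_2 = O(\omega_1)$ then comes for free by conjugating the inclusion with the Fourier transform, which swaps the roles of $\omega_i$ and $\eta_i$, so no separate frequency-concentrated family is needed at all. As written, your necessity direction is not yet complete --- none of the three candidate constructions is actually carried out --- but inserting the normalization $\psi(0)=1$ and the two displayed bounds closes the argument and reproduces the paper's proof.
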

\begin{proof}
$(i)$ Sufficiency is clear.
Suppose now that $\mathcal{S}_{[\eta_1]}^{[\omega_1]} \subseteq \mathcal{S}_{[\eta_2]}^{[\omega_2]}$.  Since the Fourier transform is  an isomorphism from $\mathcal{S}_{[\eta_i]}^{[\omega_i]}$ onto $\mathcal{S}^{[\eta_i]}_{[\omega_i]}$, $i = 1,2$, it suffices to show that $\eta_2 = O(\eta_1)$. Let $K,C \geq 1$ be such that \eqref{eq:AlphaEquiv} holds for $\omega = \eta_1$. \\
\emph{Beurling case:} Pick $\psi \in \mathcal{S}_{(\eta_1)}^{(\omega_1)}$ with $\psi(0) = 1$.  By the closed graph theorem, the inclusion $\mathcal{S}_{(\eta_1)}^{(\omega_1)} \subseteq \mathcal{S}_{(\eta_2)}^{(\omega_2)}$ is continuous. Hence, for all $h > 0$ there are $C_1, k > 0$ such that
 $$
\|\varphi\|_{\mathcal{S}^{\eta_2,h}_{\omega_2,h}} \leq C_1\|\varphi\|_{\mathcal{S}^{\eta_1,k}_{\omega_1,k}}, \qquad \forall \varphi \in \mathcal{S}^{(\omega_1)}_{(\eta_1)}.
$$
For every $x \in \R$ it holds that
$$
e^{h\eta_2(x)} \leq \| T_x \psi \|_{\mathcal{S}^{\omega_2,h}_{\eta_2,h}} \leq C_1\|T_x \psi\|_{\mathcal{S}^{\omega_1,k}_{\eta_1,k}} \leq CC_1\|\psi\|_{\mathcal{S}^{\omega_1,Kk}_{\eta_1,Kk}} e^{Kk\eta_1(x)},
$$
which implies  $\eta_2 = O(\eta_1)$. \\
\emph{Roumieu case:} Pick $\psi \in \mathcal{S}_{\{\eta_1\}}^{\{\omega_1\}}$ with $\psi(0) = 1$. Let $k >0$   be such that  $\psi \in \mathcal{S}^{\omega_1, K k}_{\eta_1, K k}$. By  the closed graph theorem of De Wilde, the inclusion $\mathcal{S}_{\{\eta_1\}}^{\{\omega_1\}}\subseteq  \mathcal{S}_{\{\eta_2\}}^{\{\omega_2\}}$ is continuous. Grothendieck's factorization theorem implies  that there are $C_1,h >0$ such that $\mathcal{S}_{\eta_1,k}^{\omega_1,k}\subseteq  \mathcal{S}_{\eta_2,h}^{\omega_2,h}$ and 
$$
\|\varphi\|_{\mathcal{S}^{\eta_2,h}_{\omega_2,h}} \leq C_1\|\varphi\|_{\mathcal{S}^{\eta_1,k}_{\omega_1,k}}, \qquad \forall \varphi \in \mathcal{S}^{\omega_1,k}_{\eta_1,k}.
$$
The result can now be shown in the same way as in the Beurling case. \\
$(ii)$ This follows from part $(i)$.
\end{proof}

\begin{example}
Consider  $\omega_{\mu,u}(t) = t^{\frac{1}{\mu}}\log(1+t)^u$ for $\mu >0$ and $u \in \R$.  Let $\mu_i,\tau_i > 0$ and $u_i,v_i \in \R$, $i =1,2$, be such that $\omega_{\mu_i,u_i}(t) = o(t^2)$ and  $\omega_{\tau_i,v_i}(t) = o(t^2)$ ($\omega_{\mu_i,u_i}(t) = O(t^2)$ and  $\omega_{\tau_i,v_i}(t) = O(t^2)$). Theorem  \ref{iso-BB} (see also Example \ref{example-iso}) yields that
$$
\mathcal{S}_{[\omega_{\tau_1,v_1}]}^{[\omega_{\mu_1,u_1}]} \cong \mathcal{S}_{[\omega_{\tau_2,v_2}]}^{[\omega_{\mu_2,u_2}]}
$$
if and only if $\mu_1 + \tau_1 = \mu_2 + \tau_2$ and $u_1\mu_1 + v_1\tau_1 = u_2\mu_2 + v_2\tau_2$. On the other hand, by Theorem \ref{inclusion}, it holds that 
$$
\mathcal{S}_{[\omega_{\tau_1,v_1}]}^{[\omega_{\mu_1,u_1}]} = \mathcal{S}_{[\omega_{\tau_2,v_2}]}^{[\omega_{\mu_2,u_2}]}
$$
if and only if $\mu_1 = \mu_2$, $\tau_1 = \tau_2$, $u_1 = u_2$, and $v_1 = v_2$.
\end{example}


\begin{thebibliography}{999}
\setlength{\itemsep}{0pt}

\bibitem{Bargetz2015a}
{\sc C.~Bargetz}, {\em Completing the Valdivia--Vogt tables of sequence-space representations of spaces of smooth functions and distributions}, Monatsh. Math. \textbf{177} (2015), 1--14.

\bibitem{Bargetz2015b}
{\sc C.~Bargetz}, {\em Explicit representations of spaces of smooth functions and distributions}, 
J. Math. Anal. Appl. \textbf{424} (2015),1491--1505.


\bibitem{B-D-N-SeqSpRepWilson}
{\sc C.~Bargetz, A.~Debrouwere, E.~A.~Nigsch}, {\em Sequence space representations for spaces of smooth functions and distributions via Wilson bases}, 
Proc. Amer. Math. Soc. \textbf{150} (2022), 3841--3852.

\bibitem{B-LinPDOGenDist}
{\sc G.~Bj\"orck}, {\em Linear partial differential operators and generalized distributions}, 
Ark. Mat. \textbf{6} (1965), 351--407.

\bibitem{B-J-GaborUnimodWindDecay}
{\sc H.~B\"{o}lcskei, A. J. E. M.~Janssen}, {\em Gabor frames, unimodularity, and window decay}, 
J. Fourier Anal. Appl.  \textbf{6} (2000), 255--276.

\bibitem{C-G-P-R-AnistropicShubinOpExpGSSp}
{\sc M.~Cappiello, T.~Gramchev, S.~Pilipovi\'c, L.~Rodino}, {\em Anisotropic Shubin operators and eigenfunction expansions in Gelfand–Shilov spaces}, 
J. Anal. Math. \textbf{138} (2019), 857--870.

\bibitem{C-C-K-CharGelfandShilovFourier}
{\sc J.~Chung, S.~Y.~Chung, D.~Kim}, {\em Characterisation of the Gelfand-Shilov spaces via Fourier transforms}, 
Proc. Amer. Math. Soc. \textbf{124} (1996), 2101--2108.

\bibitem{D-SeqSpRepEntFunc}
{\sc A.~Debrouwere}, {\em Sequence space representations for spaces of entire functions with rapid decay on strips}, 
J. Math. Anal. Appl \textbf{497} (2021), 124872.

\bibitem{D-N-SeqSpRepTMIB}
{\sc A.~Debrouwere, L.~Neyt}, {\em Sequence space representations for translation-modulation invariant function and distribution spaces}, 
J. Fourier Anal. Appl. \textbf{28} (2022),  87.

\bibitem{D-N-V-CharNuclBBSp}
{\sc A.~Debrouwere, L.~Neyt, J.~Vindas}, {\em Characterization of nuclearity for Beurling-Bj\"orck spaces}, 
Proc. Amer. Math. Soc. \textbf{12} (2020), 5171--5180.

\bibitem{D-N-V-ExplCommSeqSpRep}
{\sc A.~Debrouwere, L.~Neyt, J.~Vindas}, {\em Explicit commutative sequence space representations of function and distribution spaces on the real half-line}, 
Arch. Math. \textbf{118} (2022), 383--391.

\bibitem{D-N-V-InclGSSp}
{\sc A.~Debrouwere, L.~Neyt, J.~Vindas}, {\em On the inclusion relations between Gelfand-Shilov spaces}, 
Studia Math., to appear. 
DOI: 10.4064/sm240708-15-2.

\bibitem{D-J-J-WilsonBasisExpDecay}
{\sc I.~Daubechies, S.~Jaffard, J.-L.~Journ\'e}, {\em A simple Wilson orthonormal basis with exponential decay}, 
SIAM J. Math. Anal. \textbf{22} (1991), 554--573.

\bibitem{G-S-GenFunc2}
{\sc I.~M.~Gel'fand, G.~E.~Shilov}, {\em Generalized functions. Vol. 2: Spaces of fundamental and generalized functions}, 
Academic Press, New York-London, 1968.

\bibitem{G-FoundationsTimeFreqAnal}
{\sc K. Gr\"{o}chenig}, {\em Foundations of Time-Frequency Analysis}, 
Birkh\"{a}user Boston, Inc., Boston, MA, 2001.

\bibitem{GZ-SpTestFuncSTFT}
{\sc K.~Gr\"ochenig, G.~Zimmermann},
{\em Spaces of test functions via the STFT},
J. Function Spaces Appl. \textbf{2} (2004), 25--53.

\bibitem{Janssen}
{\sc A.~J.~E.~M.~Janssen}, {\em Signal analytic proofs of two basic results on lattice expansions}, 
Appl. Comp. Harmonic Anal. \textbf{1} (1994), 350--354.

\bibitem{L-HermiteFuncWeighSpGenFunc}
{\sc M.~Langenbruch}, {\em Hermite functions and weighted spaces of generalized functions}, 
Manuscripta Math. \textbf{119} (2006), 269--285.

\bibitem{L-BasesGerms}
{\sc M.~Langenbruch}, {\em Bases in spaces of analytic germs}, 
Ann. Polon. Math. \textbf{106} (2012), 223--242.

\bibitem{L-DiamDimWeighSpGerms}
{\sc M.~Langenbruch}, {\em On the diametral dimension of weighted spaces of analytic germs}, 
Studia Math. \textbf{233} (2016), 85--100.

\bibitem{M-V-IntroFuncAnal} 
{\sc R.~Meise, D.~Vogt}, {\em Introduction to Functional Analysis}, 
Clarendon Press, Oxford University Press, New York, 1997.

\bibitem{N-V-KernThmBBSp}
{\sc L.~Neyt, J.~Vindas}, {\em Kernel theorems for Beurling-Bj\"orck type spaces}, 
Bull. des Sci. Math. \textbf{187} (2023), Article number: 103309.

\bibitem{OrtnerWagner2013}
{\sc N.~Ortner and P.~Wagner}, {\em Explicit representations of L. Schwartz’ spaces $\mathcal{D}_{L_p}$ and $\mathcal{D}'_{L_p}$ by the sequence spaces $s \hat{\otimes} \ell^p$ and $s' \hat{\otimes} \ell^p$, respectively, for $1 < p < \infty$}, 
J. Math. Anal. Appl. \textbf{404} (2013), 1--10.


\bibitem{S-ThDist}
{\sc L.~Schwartz}, {\em Th\'eorie des distributions}, 
Hermann, Paris, 1966.

\bibitem{V-TopLCS}
{\sc M.~Valdivia}, {\em Topics in Locally Convex Spaces}, 
Vol. 67. North-Holland Mathematics Studies. Notas de Matem\'atica, 85. Amsterdam: North-Holland Publishing Co., 1982.

\bibitem{V-IsomorphPowSp}
{\sc D.~Vogt}, {\em Ein Isomorphiesatz f\"ur Potenzreihenr\"aume}, 
Arch. Math. \textbf{38} (1982), 540--548.

\bibitem{V-SeqSpRepTestFuncDist} 
{\sc D.~Vogt}, {\em Sequence space representations of spaces of test functions and distributions}, 
pp. 405--443, in {\em Functional analysis, Holomorphy, and Approximation Theory}, Lecture Notes in Pure and Appl. Math. 83, 1983.




\end{thebibliography}
\end{document}